\documentclass[11pt]{article}

\usepackage[margin=1.05in]{geometry}
\usepackage[T1]{fontenc}
\usepackage[utf8]{inputenc}
\usepackage{lmodern}
\usepackage{amsmath,amssymb,amsthm,mathtools}
\usepackage{aliascnt}
\usepackage{mathrsfs}
\usepackage{microtype}
\usepackage[colorlinks=true,linkcolor=blue,citecolor=blue,urlcolor=blue]{hyperref}
\usepackage[nameinlink,noabbrev]{cleveref}

\numberwithin{equation}{section}

\theoremstyle{plain}
\newtheorem{theorem}{Theorem}[section]

\newaliascnt{proposition}{theorem}
\newtheorem{proposition}[proposition]{Proposition}
\aliascntresetthe{proposition}

\newaliascnt{lemma}{theorem}
\newtheorem{lemma}[lemma]{Lemma}
\aliascntresetthe{lemma}

\newaliascnt{corollary}{theorem}
\newtheorem{corollary}[corollary]{Corollary}
\aliascntresetthe{corollary}

\theoremstyle{definition}
\newtheorem{example}[theorem]{Example}

\theoremstyle{remark}
\newaliascnt{remark}{theorem}
\newtheorem{remark}[remark]{Remark}
\aliascntresetthe{remark}

\Crefname{theorem}{Theorem}{Theorems}
\Crefname{theorem}{Theorem}{Theorems}

\Crefname{proposition}{Proposition}{Propositions}
\Crefname{proposition}{Proposition}{Propositions}

\Crefname{lemma}{Lemma}{Lemmas}
\Crefname{lemma}{Lemma}{Lemmas}

\Crefname{corollary}{Corollary}{Corollaries}
\Crefname{corollary}{Corollary}{Corollaries}

\Crefname{remark}{Remark}{Remarks}
\Crefname{remark}{Remark}{Remarks}

\DeclareMathOperator{\Ad}{Ad}
\DeclareMathOperator{\Tr}{Tr}
\DeclareMathOperator{\diag}{diag}
\DeclareMathOperator{\Lie}{Lie}
\DeclareMathOperator{\Hom}{Hom}
\DeclareMathOperator{\pr}{pr}
\DeclareMathOperator{\Herm}{Herm}
\DeclareMathOperator{\vol}{vol}

\newcommand{\C}{\mathbb C}
\newcommand{\R}{\mathbb R}
\newcommand{\Z}{\mathbb Z}
\newcommand{\SU}{\mathrm{SU}}
\newcommand{\U}{\mathrm{U}}
\newcommand{\gLie}{\mathfrak g}
\newcommand{\kLie}{\mathfrak k}
\newcommand{\lLie}{\mathfrak l}
\newcommand{\pLie}{\mathfrak p}
\newcommand{\tLie}{\mathfrak t}
\newcommand{\nc}{\mathrm{nc}}
\newcommand{\sr}{\mathrm{SR}}
\newcommand{\Wt}{W}

\begin{document}
	
	\title{Sub-Laplacians on Compact Lie Groups: Heat Kernels, Distance, and Zeta Determinants}
	
	\author{Wolfram Bauer, Zhicheng Han, Zhipeng Yang\thanks{Corresponding author:yangzhipeng326@163.com}}
	\date{}
	\maketitle
	\AtEndDocument{%
		\par
		\bigskip
		\bigskip
		\noindent
		\textbf{Wolfram Bauer:}\\[0.2em]
		\textsc{Institut f\"ur Analysis, Leibniz Universit\"at Hannover,
			30167 Hannover, Germany}\\[0.3em]
		\textit{E-mail address}: \texttt{bauer@math.uni-hannover.de}\\[1.5em]
		\noindent
		\textbf{Zhicheng Han:}\\[0.2em]
		\textsc{Institut f\"ur Analysis, Leibniz Universit\"at Hannover,
			30167 Hannover, Germany}\\[0.3em]
		\textit{E-mail address}: \texttt{hanzc@math.uni-hannover.de}\\[1.5em]
		\noindent
		\textbf{Zhipeng Yang:}\\[0.2em]
		\textsc{Department of Mathematics, Yunnan Key Laboratory of Modern Analytical Mathematics and Applications, Yunnan Normal University, 650500 Kunming, China}\\
		\textsc{Department of Mathematics: Analysis, Logic and Discrete Mathematics, Ghent University, 9000 Ghent, Belgium}\\[0.3em]
		\textit{E-mail address}: \texttt{yangzhipeng326@163.com}%
	}

	\date{}

	\maketitle
	
	\begin{abstract}
		We study heat kernels, sub-Riemannian distances, and spectral zeta functions of sub-Laplacians determined by closed connected subgroups of compact Lie groups. Combining Hall's inversion formula with the affine lattice expansion of the compact group heat kernel, we derive a Cartan integral representation involving the group's exponential lattice. For two-step compact Lie pairs, the full algebraic small-time heat trace expansion is determined, up to an exponentially small remainder, by two explicit constants $C_{G,L}$ and $\beta_{G,L}$. This expansion determines all heat coefficients, the poles and residues of the reduced spectral zeta function, and its values at nonpositive integers. For the transvective symmetric subclass, we prove uniform vertical asymptotics for the Carnot-Carath\'eodory distance; the leading coefficient $\mathfrak F_{G,K}(Z)$ is the attained minimum of a finite-dimensional singular value problem. For simply connected two-step pairs, we obtain an exact decomposition of the zeta-regularized determinant into local, lattice, and spectral terms, with exponential truncation estimates. We specialize these results to block subgroups of $\SU(N)$, recovering the classical $\SU(2)$ and CR sphere spectra.
	\end{abstract}
	
	\medskip
	\noindent\textbf{Keywords:} Subelliptic heat kernel, sub-Riemannian distance, spectral zeta function.
	
	\smallskip
	\noindent\textbf{2020 MSC:} 35K08, 53C17, 58J52.
	
	\section{Introduction and main results}
	
	The heat kernel is one of the principal objects through which geometry and analysis interact. For an elliptic Laplacian it carries three kinds of information at once. Its off-diagonal exponential decay recovers the Riemannian distance through Varadhan's formula \cite{Varadhan1967}; its diagonal expansion is formed from local geometric invariants \cite{Gilkey}; and the heat semigroup determines complex powers, while its trace yields the spectral zeta function and the zeta-regularized determinant \cite{Seeley,RaySinger}. All three arise from the heat semigroup, but they are usually accessed by different methods: large deviations for the distance, parametrices for local coefficients, and spectral decompositions or Mellin transforms for global invariants. The corresponding logarithmic distance formula remains valid for bracket-generating sums of squares and recovers the associated Carnot-Carath\'eodory distance.
	
	For sums of squares satisfying H\"ormander's bracket-generating condition, the same questions arise in an anisotropic geometry. H\"ormander's theorem gives hypoellipticity \cite{Hormander1967}, while the lifting and nilpotent approximation of Rothschild and Stein identify a graded nilpotent group as the local model \cite{RothschildStein1976}. Gaveau's analysis of two-step groups connected the heat kernel with a least action principle \cite{Gaveau}, and L\'eandre extended the logarithmic heat kernel formula to degenerate diffusions \cite{LeandreUpper1987,LeandreLower1987}. Precise small-time expansions away from and on the diagonal were obtained by Ben Arous \cite{BenArous1988,BenArous1989}. Subsequent work clarified the roles of the sub-Riemannian cut locus, abnormal minimizers, and nilpotentization \cite{BarilariBoscainNeel2012,ColinHillairetTrelat2021}.
	
	The use of spectral zeta functions to define determinants was inspired by their analogues in number theory. Minakshisundaram and Pleijel used the Mellin transform of the heat trace to meromorphically continue the zeta function of the Laplace-Beltrami operator \cite{MinakshisundaramPleijel1949}, and Seeley's construction of complex powers extended this construction to elliptic operators \cite{Seeley}. Ray and Singer then used zeta-regularized determinants of Hodge Laplacians to define analytic torsion \cite{RaySinger}. Quillen's determinant line construction and the work of Osgood, Phillips, and Sarnak on determinants as functionals of Riemannian metrics further established their role in global spectral geometry \cite{Quillen1985,OsgoodPhillipsSarnak1988}. For elliptic operators, the small-time heat expansion determines the poles of the spectral zeta function, whereas the zeta-regularized determinant depends on the full heat trace and is therefore genuinely global.
	
	The passage from Laplace-type operators to hypoelliptic operators preserves the Mellin transform argument but requires a heat calculus adapted to the filtered geometry. Explicit determinant formulas in nilpotent and sub-Riemannian geometry have been obtained in highly structured models. Furutani and de Gosson treated Laplace determinants on Heisenberg manifolds and related them to Poisson summation and the Kronecker limit formula \cite{FurutaniDeGosson2003}. Bauer and Furutani computed the zeta-regularized determinant of a sub-Laplacian on $\mathbb S^3$ and later treated product sub-Riemannian manifolds using Poisson summation on an $\mathbb S^1$-factor \cite{BauerFurutani2008,BauerFurutani2010}. Spectral zeta functions on compact two-step and pseudo $H$-type nilmanifolds were subsequently studied in \cite{BauerFurutaniIwasaki2012,BauerFurutaniIwasaki2015,Fischer2022}. The general heat calculus determines the local meromorphic structure, while the explicit evaluation of the finite global part remains model-dependent. The problem addressed here is to compute that finite part for simply connected two-step compact Lie pairs.
	
	Compact Lie groups provide complementary global structure. Their ordinary bi-invariant heat kernels admit Peter-Weyl expansions, descriptions through the global exponential map, and affine lattice formulas. The wrapping construction of Dooley and Wildberger and its heat kernel form display the exponential lattice \cite{DooleyWildberger,Maher}. Hall's Segal-Bargmann transform and inversion formula identify the holomorphic continuation of the compact group heat kernel and invert the compact heat flow \cite{Hall1994,Hall1997}. A horizontal sub-Laplacian retains this root and lattice structure, while its short-time geometry is governed by a Carnot tangent group. Heat kernels on nilpotent models and compact nilmanifolds provide the corresponding local formulas \cite{Cygan1979,AgrachevBoscainGauthierRossi2009, BauerFurutaniIwasaki2012,BauerFurutaniIwasaki2015,Fischer2022}, and representation-theoretic decompositions describe compact group sub-Laplacians and their sub-Riemannian length spectra \cite{Domokos2015,DomokosKrauelPignoShanbromVanValkenburgh2018}. Explicit integral representations and small-time asymptotics are also available on canonical model geometries such as $\SU(2)$, $\mathrm{SL}(2,\mathbb R)$, CR spheres, quaternionic Hopf fibrations, and the rank-five structure on $\mathbb S^7$ \cite{BaudoinBonnefont2009,Bonnefont2012,BaudoinWang2013,BaudoinWang2014, BauerLaaroussiTarama2024}. We combine the nilpotent tangent geometry, the actual exponential lattice, and the branching spectrum to obtain the local heat expansion and the global determinant formula.
	
	The purpose of this paper is to connect these local and global structures on compact Lie pairs. Their horizontal geometries have noncommutative Carnot tangent groups, while roots, exponential lattices, and branching laws remain available globally. The starting point is an inversion formula for the horizontal heat kernel. For two-step pairs, the logarithmic small-time limit links the same kernel to the sub-Riemannian distance. In vertical directions, the exact heat kernel expression obtained here enables a uniform comparison of endpoints generated by the same horizontal control in the compact group and its nilpotent tangent group and makes this link quantitative. The resulting analysis connects heat propagation, vertical distance, local invariants, and global spectral data.
	
	A further motivation comes from measurement-based quantum computation. In that setting, a resource state may implement only a prescribed family of infinitesimal logical generators, while directions outside that family appear only through commutators. A small noisy implementation has a quadratic local error, and after discretizing a horizontal curve, the accumulated leading error is a Riemann sum for sub-Riemannian energy. Thus the same sub-Riemannian distance also gives the leading geometric error coefficient. This motivates the corresponding sub-Riemannian control problem on compact Lie pairs. The measurement-based model is studied explicitly in \cite{HantzkoAdhikaryRaussendorf2025}, and the use of sub-Riemannian distance agrees with the broader geometric view of quantum computation in \cite{NielsenDowlingGuDoherty2006,RaussendorfBriegel2001}.
	
	Let $G$ be a compact connected Lie group, let $\theta$ be an involutive automorphism, and put $K=(G^\theta)_0$. Let $L\subset K$ be a closed connected subgroup. Taking $\theta=\mathrm{id}$ includes an arbitrary closed connected subgroup of $G$. Fix a $\theta$-invariant $\Ad(G)$-invariant inner product, let $\lLie=\Lie(L)$, and put $\pLie=\lLie^\perp$. Thus
	\[
	\gLie=\lLie\oplus\pLie,
	\qquad
	m=\dim\pLie,
	\qquad
	r=\dim\lLie.
	\]
	For an orthonormal basis $X_1,\ldots,X_m$ of $\pLie$, define the horizontal sub-Laplacian and its heat kernel by
	\[
	\Delta_{\pLie}=\sum_{i=1}^m\widetilde X_i^{\,2},
	\qquad
	p_t=e^{t\Delta_{\pLie}}\delta_e,
	\qquad t>0.
	\]
	The associated Carnot-Carath\'eodory distance is
	\begin{equation}\label{eq:intro-sr-distance}
		d_{\sr}(g_0,g_1):=\inf_{\gamma}
		\left\{\int_0^1|\gamma(t)^{-1}\dot\gamma(t)|\,\mathrm dt:
		\gamma(0)=g_0,\ \gamma(1)=g_1,\
		\gamma^{-1}\dot\gamma\in\pLie\ \text{a.e.}\right\}.
	\end{equation}
	If $\Delta_{\lLie}$ denotes the Laplacian in the $L$-directions, then
	\[
	\Delta_G=\Delta_{\lLie}+\Delta_{\pLie},
	\]
	where $\Delta_G$ is the bi-invariant Laplacian. As proved in \Cref{lem:strong-commutation}, the nonnegative self-adjoint closures of the two summands strongly commute, and hence
	\[
	e^{t\Delta_G}=e^{t\Delta_{\lLie}}e^{t\Delta_{\pLie}}.
	\]
	Thus the horizontal heat kernel can be recovered by inverting the $L$-heat flow applied to the ordinary heat kernel of the Lie group $G$. Hall's inversion formula converts this operator identity into an integral over $\lLie$. The holomorphic continuation of the ordinary heat kernel is then written as an affine lattice theta quotient.
	
	The first objective of this paper is to obtain an explicit integral expression for the heat kernel. Let $q_t$ be the heat kernel of $e^{t\Delta_G}$, and let $q_t^{\C}$ be its holomorphic continuation to the compatible matrix complexification fixed in \Cref{sec:preliminaries}, and fix a maximal torus $T\subset L$. Write $\rho_L$ for the half-sum of the positive roots of $L$ and $j_L^{\nc}$ for the noncompact Jacobian defined in \Cref{sec:preliminaries}. The inversion representation is as follows.
	
	\begin{theorem}
		If $\pLie$ generates $\gLie$ as a Lie algebra, that is, $\Lie(\pLie)=\gLie$, then, for $g\in G$ and $t>0$,
		\[
		p_t(g)=e^{-t|\rho_L|^2}
		\int_{\lLie}q_t^{\C}(ge^{iY})j_L^{\nc}(Y)^{1/2}
		\frac{e^{-|Y|^2/(4t)}}{(4\pi t)^{\dim L/2}}\,\mathrm dY.
		\]
		The integral is absolutely convergent. The holomorphic kernel has the signed affine lattice formula of \Cref{thm:heat-kernel-image}, and \Cref{thm:cartan-general} reduces the integral to the Cartan subalgebra $\tLie$.
	\end{theorem}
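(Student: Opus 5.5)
The plan is to start from the strong commutation of \Cref{lem:strong-commutation}, which gives $e^{t\Delta_G}=e^{t\Delta_{\lLie}}e^{t\Delta_{\pLie}}$. Applied to $\delta_e$, this reads $q_t=e^{t\Delta_{\lLie}}p_t$. The operator $\Delta_{\lLie}$ is built from the right-invariant fields $\widetilde Y$ with $Y\in\lLie$, so it acts fibrewise along the right $L$-cosets $gL$. For fixed $g$, the function $h\mapsto p_t(gh)$ on $L$ is therefore evolved by the ordinary heat flow of the compact group $L$ (with its induced bi-invariant metric), and the result is $h\mapsto q_t(gh)$. Recovering $p_t$ from $q_t$ is thus the problem of inverting the heat flow on $L$. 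This is precisely what Hall's inversion formula \cite{Hall1997} does. For $F=e^{t\Delta_L}f$ it gives
\[
f(h)=e^{-t|\rho_L|^2}\int_{\lLie}F^{\C}(he^{iY})\,j_L^{\nc}(Y)^{1/2}\frac{e^{-|Y|^2/(4t)}}{(4\pi t)^{\dim L/2}}\,\mathrm dY,
\]
where $F^{\C}$ is the holomorphic extension of $F$ to $L_{\C}$. Taking $h=e$, $f=p_t(g\,\cdot)$ and $F=q_t(g\,\cdot)$ gives the stated formula. One must also check that the holomorphic extension of $h\mapsto q_t(gh)$ to $L_{\C}$ coincides with $q_t^{\C}(ge^{iY})$. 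This follows because the compatible complexification fixed in \Cref{sec:preliminaries} contains $L_{\C}$ as a subgroup, and $q_t^{\C}$ is entire on $G_{\C}$.

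Hall's theorem needs a hypothesis: $f$ must lie in $L^2(L)$, or better be smooth, so that the formula holds pointwise. This is where the assumption $\Lie(\pLie)=\gLie$ enters. By H\"ormander's theorem, $\Delta_{\pLie}$ is hypoelliptic, so $p_t$ is a smooth function on $G$. Hence $f=p_t(g\,\cdot)\in C^\infty(L)$, and its $L$-Fourier series converges absolutely and uniformly. I would verify the inversion formula at the level of matrix coefficients. For an irreducible representation $\pi$ of $L$, $e^{t\Delta_L}$ acts on coefficients of $\pi$ by $e^{-t c_\pi}$, with $c_\pi=|\lambda+\rho_L|^2-|\rho_L|^2$. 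The $Y$-integral of $\pi(e^{iY})$ against the weight $j^{\nc}_L(Y)^{1/2}e^{-|Y|^2/4t}$ should produce $e^{tc_\pi}e^{t|\rho_L|^2}$. After reduction to $\tLie$ via Weyl integration, this is a Gaussian identity, because $j^{\nc}_L(Y)^{1/2}$ cancels the Weyl denominator up to $\sinh$ factors. Summing over $\pi$ then gives the formula, provided the sum and the integral may be interchanged.

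Interchanging sum and integral, and proving the claimed absolute convergence, is the main obstacle. Here $q_t^{\C}(ge^{iY})$ grows like $e^{|Y|^2/(4t)}$ times lower-order factors, so the integrand is not obviously integrable. My first attempt would use the Peter--Weyl expansion of $q_t$ on $G$ and branching to $L$, with the bound $|\pi(e^{iY})|\le e^{|\lambda||Y|}$ for $L$-types inside $G$-types of highest weight $\mu$. Each term of the expanded integrand is then a Gaussian times an exponential, and hence integrable. Summability of the resulting series should follow from the decay $e^{-t|\mu+\rho|^2}$ dominating $e^{t|\lambda+\rho_L|^2}$, where $|\lambda|\le|\mu|$.

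The difficulty is that this domination is only borderline when $\lambda$ is close to $\mu$. For such terms I would invoke the hypothesis a second time. The Casimir gap $c_\mu^G-c_\lambda^L$ is the eigenvalue of $-\Delta_{\pLie}$ on the corresponding isotypic piece. It is positive and grows, as in the smoothness argument, because of the bracket-generating condition: subellipticity gives $-\Delta_{\pLie}\ge \varepsilon(-\Delta_G)^{\delta}-C$. This yields a factor $e^{-t\varepsilon|\mu|^{2\delta}}$, which together with polynomial growth of multiplicities gives absolute convergence. With the interchange justified, the formula follows.
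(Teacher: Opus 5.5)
Your first paragraph is the paper's proof. \Cref{lem:strong-commutation} gives $e^{t\Delta_{\lLie}}p_t=q_t$. Restricting to the coset $gL$ turns this into the heat flow of $L$ applied to $f_{g,t}=p_t(g\,\cdot\,)$. Hall's inversion at $x=e$ then returns $p_t(g)$, and $L_{\C}\subset G_{\C}$ identifies the continuation with $z\mapsto q_t^{\C}(gz)$, with smoothness supplied by H\"ormander's theorem. The paper takes both the inversion identity and the absolute convergence directly from Hall's theorem, so everything after your first paragraph re-proves a cited result. Your matrix-coefficient Gaussian identity is correct, but the majorant you propose for the interchange of sum and integral does not work as written.

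\textbf{Why your majorant fails.} You use only $\|\pi_\lambda(e^{iY})\|\le e^{|\lambda||Y|}$. Since $j_L^{\nc}(H)^{1/2}$ is of size $e^{\rho_L(H)}$ on the dominant chamber, Laplace's method in the direction of $\rho_L$ shows that the integral of $e^{|\lambda||Y|}$ against $j_L^{\nc}(Y)^{1/2}e^{-|Y|^2/(4t)}$ is of size $e^{t(|\lambda|+|\rho_L|)^2}$ up to polynomial factors, not $e^{t|\lambda+\rho_L|^2}$. The surplus factor $e^{2t(|\lambda||\rho_L|-\langle\lambda,\rho_L\rangle)}$ has exponent linear in $|\lambda|$. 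Meanwhile $\Lambda_{\mu,\lambda}=c_G(\mu)-c_L(\lambda)$ can itself be only linear in $|\mu|$, so no subelliptic estimate can compensate.

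Here is a concrete example. Take $(\SU(n+1),S(\U(n)\times\U(1)))$ with $L=K$. The one-dimensional $K$-type in $\mathrm{Sym}^p\C^{n+1}$ has $\Lambda=np$, $|\lambda|=p\sqrt{n/(n+1)}$, and $\lambda\perp\rho_L$. The surplus exponent is then $tnp\sqrt{(n-1)/3}$, which exceeds $t\Lambda$ once $n\ge5$, so your majorant series diverges.

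\textbf{The fix: positivity.} The operator $\pi_\mu(e^{iY})=\exp(i\,\mathrm d\pi_\mu(Y))$ is positive definite, so
\[
|\chi_\mu(ge^{iY})|\le\chi_\mu(e^{iY})=\sum_\lambda m_{\mu,\lambda}\chi_\lambda(e^{iY}).
\]
Your Gaussian identity (Hall's formula for $\chi_\lambda$ at $e$) evaluates each term exactly:
\[
e^{-t|\rho_L|^2}\int_{\lLie}\chi_\lambda(e^{iY})\,j_L^{\nc}(Y)^{1/2}\frac{e^{-|Y|^2/(4t)}}{(4\pi t)^{\dim L/2}}\,\mathrm dY=d_\lambda e^{tc_L(\lambda)}.
\]
By Tonelli, the absolute integral in the theorem is therefore at most
\[
\sum_{\mu,\lambda}d_\mu m_{\mu,\lambda}d_\lambda e^{-t\Lambda_{\mu,\lambda}}=p_t(e)=\Tr e^{t\Delta_{\pLie}},
\]
with equality at $g=e$. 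This bound is finite precisely because the bracket-generating condition makes the heat operator trace class. It justifies both the interchange and the absolute convergence, with no subelliptic estimate and no polynomial bookkeeping.
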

	
	The formula applies to every compact connected Lie group $G$ and incorporates its actual exponential lattice. Together with the affine lattice expression for $q_t^{\C}$, it gives an analytic inversion formula and a global image formula. On the heat kernel diagonal $g=e$, the zero vector in the lattice produces the nilpotent model, while the nonzero vectors are separated from the origin by the shortest exponential lattice length. Off the diagonal, the Varadhan--L\'eandre limit recovers the Carnot--Carath\'eodory (sub-Riemannian) distance. These two regimes lead to the geometric and spectral parts of the paper.
	
	Our second objective is to obtain an explicit small-time asymptotic estimate for the sub-Riemannian distance. Assume now that the pair is two-step,
	\[
	\gLie=\pLie+[\pLie,\pLie].
	\]
	In the present compact orthogonal setting, this condition is equivalent to $\Lie(\pLie)=\gLie$, as shown in \Cref{sec:preliminaries}. The tangent group at the identity is the two-step Carnot group on $\pLie\oplus\lLie$, and its homogeneous dimension is
	\[
	Q=m+2r=\dim G+\dim L;
	\]
	see, for example, \cite{AgrachevBarilariBoscain}. The Carnot-Carath\'eodory distance on $G$ has the global finite-dimensional description
	\[
	d_{\sr}(e,g)^2
	=\min\Bigl\{|B|^2:
	g=\exp(A+B)\exp(-A),\ A\in\lLie,\ B\in\pLie\Bigr\};
	\]
	see \Cref{prop:distance-general}. In the purely vertical direction, $g=\exp(\varepsilon Z)$ with $Z\in\lLie$, the second layer of the tangent group. As $\varepsilon\downarrow0$, the distance is of order $\varepsilon^{1/2}$, and the leading coefficient of its square is a positively homogeneous functional determined by the tangent group.
	
	For the quantitative vertical result we specialize to $L=K$. Then $\gLie=\kLie\oplus\pLie$ is the symmetric decomposition, and we assume $[\pLie,\pLie]=\kLie$. Let
	\[
	B:\Lambda^2\pLie\longrightarrow\kLie,
	\qquad B(X\wedge Y)=[X,Y].
	\]
	For $\Omega\in\Lambda^2\pLie$, let $\sigma_1(\Omega)\ge\cdots\ge \sigma_{\lfloor m/2\rfloor}(\Omega)\ge0$ be the singular values determined by the induced metric on $\pLie$, and define
	\[
	\mathfrak F_{G,K}(Z)
	=\min_{B\Omega=Z}
	4\pi\sum_{j=1}^{\lfloor m/2\rfloor}j\,\sigma_j(\Omega).
	\]
	By \Cref{thm:sp-free-distance}, the weighted singular-value functional $\mathfrak F_{G,K}$ is the exact vertical squared distance on the free two-step group; compare \cite[Theorem~2]{Brockett1982}. Passing through the central quotient induced by the bracket identifies it with the vertical distance on the tangent group of $(G,K)$. Separating the odd and even terms in the logarithmic expansion then compares the horizontal tangent loop with the compact endpoint $\exp(\varepsilon Z)$ in $G$.
	
	\begin{theorem}\label{thm:intro-vertical}
		For every $R>0$ there are $\varepsilon_R,C_R>0$ such that
		\[
		\left|d_{\sr}(e,\exp(\varepsilon Z))^2
		-\varepsilon\mathfrak F_{G,K}(Z)\right|
		\le C_R\varepsilon^{3/2}
		\]
		whenever $\mathfrak F_{G,K}(Z)\le R$ and $0<\varepsilon\le\varepsilon_R$. The lower estimate has the stronger form
		\[
		d_{\sr}(e,\exp(\varepsilon Z))^2
		\ge \varepsilon\mathfrak F_{G,K}(Z)-C_R\varepsilon^2.
		\]
	\end{theorem}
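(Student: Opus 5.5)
We compare horizontal controls on $G$ with the same controls on the two-step tangent group $N=\pLie\oplus\kLie$, whose law is $(X,Z)\cdot(X',Z')=(X+X',Z+Z'+\tfrac12[X,X'])$. By \Cref{thm:sp-free-distance} and the central quotient induced by $B$, $\mathfrak F_{G,K}(Z)$ is the exact squared distance from the origin to $(0,Z)$ in $N$, and it is attained by some control. Since $\mathfrak F_{G,K}$ is positively homogeneous of degree one, dilation $\delta_{\sqrt\varepsilon}$ turns a minimizer for $(0,Z)$ into a control $u_\varepsilon=\sqrt\varepsilon\,u$ of energy $\varepsilon\mathfrak F_{G,K}(Z)$ that reaches $(0,\varepsilon Z)$ in $N$. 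All controls below have $L^2$-norm $O(\sqrt{\varepsilon R})$. Uniformity in $Z$ comes from this, together with compactness of the sublevel set $\{\mathfrak F_{G,K}\le 1\}$, which holds because $\mathfrak F_{G,K}$ is continuous and positive on $\kLie\setminus\{0\}$.

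\emph{Upper bound.} Solve $\dot\gamma=\gamma u_\varepsilon$ in $G$ with $\gamma(0)=e$. The Magnus (Chen--Fliess) expansion gives
\[
\log\gamma(1)=\int u_\varepsilon+\tfrac12\iint_{s<t}[u_\varepsilon(s),u_\varepsilon(t)]+E_3,
\]
where the first two terms reproduce the $N$-endpoint $\varepsilon Z$ and $|E_3|\le C\|u_\varepsilon\|_{L^1}^3=O(\varepsilon^{3/2})$. We correct the error $\exp(\varepsilon Z)^{-1}\gamma(1)=\exp(O(\varepsilon^{3/2}))$ using the ball–box estimate near $e$: a point at Riemannian distance $\eta$ has $d_{\sr}\le C\sqrt\eta$. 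Its $\pLie$-component, of size $O(\varepsilon^{3/2})$, and its $\kLie$-component, also of size $O(\varepsilon^{3/2})$, are together reached with sub-Riemannian length $O(\varepsilon^{3/4})$. Concatenating gives
\[
d_{\sr}\le \sqrt{\varepsilon\mathfrak F}+C\varepsilon^{3/4},
\]
and squaring yields the error $2\sqrt{\varepsilon R}\,C\varepsilon^{3/4}+C^2\varepsilon^{3/2}=O(\varepsilon^{5/4})$. This is not yet $\varepsilon^{3/2}$.

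To reach $\varepsilon^{3/2}$ we use instead \Cref{prop:distance-general}, which says exactly that $g=\exp(A+B)\exp(-A)$ is reached by the constant-type control $t\mapsto \Ad$-rotated $B$ with cost $|B|^2$. We choose the $N$-minimizer in this normal form, i.e. $(A_0,B_0)$ scaled as $A=A_0$, $B=\sqrt\varepsilon B_0$ (vertical normal geodesics in $N$). We then solve $\exp(A+B)\exp(-A)=\exp(\varepsilon Z)$ by the implicit function theorem in a perturbation $B=\sqrt\varepsilon(B_0+\beta)$, $A=A_0+\alpha$. Matching orders via the odd/even separation in the logarithmic expansion, using the symmetric-pair relations $[\kLie,\pLie]\subset\pLie$ and $[\pLie,\pLie]\subset\kLie$, shows the $\kLie$-mismatch is $O(\varepsilon^2)$ and the $\pLie$-mismatch is $O(\varepsilon^{3/2})$, absorbed by $\alpha$. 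Hence $|B|^2=\varepsilon\mathfrak F+O(\varepsilon^{3/2})$.

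\emph{Lower bound.} Take a $G$-minimizer $u$ for $\exp(\varepsilon Z)$, with energy $\le 2\varepsilon R$ by the upper bound. Run the same control in $N$. The Magnus expansion again gives endpoint $(\xi,\varepsilon Z+\zeta)$. The parity structure — the $\theta$-grading, under which odd brackets lie in $\pLie$ and even brackets in $\kLie$ — shows $|\zeta|=O(\varepsilon^2)$, since the next $\kLie$ term is of fourth order, while $\xi=O(\varepsilon^{3/2})$. The $N$-distance to $(\xi,\varepsilon Z+\zeta)$ is at least the $N$-distance to the vertical point $(0,\varepsilon Z+\zeta)$ after projection: distance is nonincreasing under the quotient to the free abelian part combined with the exact formula $\mathfrak F(\varepsilon Z+\zeta)$ minimized over horizontal offsets. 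We must verify this monotonicity. Failing that, we bound by the distance to $(0,\varepsilon Z+\zeta-\tfrac12[\cdot])$ with an $O(\varepsilon^2)$ correction. Since $\mathfrak F$ is Lipschitz on $\kLie$ (it is a minimum of norms over an affine space, so it is a seminorm quotient), we get $\varepsilon\mathfrak F(Z)-C\varepsilon^2$.

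The main obstacle is this lower bound: controlling the $\pLie$-component $\xi$ without losing a $\sqrt{\cdot}$. We would handle it by left-translating by $\exp(-\xi)$ in $N$, which changes the vertical part only by $\tfrac12[\xi,\cdot]=O(\varepsilon^{3/2}\cdot\varepsilon^{1/2})=O(\varepsilon^2)$, and then using the horizontal-projection inequality.
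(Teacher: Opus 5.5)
The upper bound with remainder $\varepsilon^{3/2}$ is not established in your proposal. Your first attempt stops at $\varepsilon^{5/4}$, as you say. The replacement argument then breaks at the implicit function theorem. After the anisotropic rescaling, solving $\exp(A+B)\exp(-A)=\exp(\varepsilon Z)$ near $(A_0,\sqrt\varepsilon B_0)$ requires the endpoint map $(A,B)\mapsto$ (endpoint of the normal geodesic in $\mathbb N_{G,K}$) to be a submersion at $(A_0,B_0)$. In other words, $(0,Z)$ must not be conjugate along the minimizer, but vertical points are typically conjugate.

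Domain and target both have dimension $\dim\gLie$. For $Y$ in the centralizer of $Z$ in $\kLie$, the automorphisms $\Ad_{e^{sY}}$ of $\mathbb N_{G,K}$ fix $(0,Z)$, so $([Y,A_0],[Y,B_0])$ lies in the kernel of the differential. Concretely, take $n=1$: the pair $(\SU(2),S(\U(1)\times\U(1)))$ with Heisenberg tangent group. The minimizing control $e^{s\operatorname{ad}A_0}B_0$ makes one full turn. Hence the $B$-derivative of the horizontal endpoint is $\int_0^1e^{s\operatorname{ad}A_0}\,\mathrm ds=0$, the $A$-derivative only produces the direction of $B_0$, and the differential has rank $2$ in a $3$-dimensional target. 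So nothing guarantees that a $\pLie$-mismatch of size $\varepsilon^{3/2}$ can be absorbed by $\alpha$. In higher rank the minimizer $\Omega_Z$ need not even be unique.

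The repair uses only ingredients you already invoke in your lower bound, and it is what the paper does. In your first attempt, apply the parity to the Magnus remainder. The cubic term is a Lie polynomial of degree three in horizontal variables, so it lies in $\pLie$. Thus $E_3$ has $\pLie$-part $O(\varepsilon^{3/2})$ but $\kLie$-part $O(\varepsilon^{2})$; this is \Cref{lem:sp-second-order-log}. BCH then gives $\log(\exp(-\varepsilon Z)\gamma(1))=P+W$ with $|P|=O(\varepsilon^{3/2})$ and $|W|=O(\varepsilon^2)$, since the cross term $\tfrac12[\varepsilon Z,P]$ lies in $\pLie$ and has size $\varepsilon^{5/2}$. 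Replace the isotropic bound $d_{\sr}\le C\sqrt\eta$ by the weighted ball--box estimate $d_{\sr}(e,\exp(P+W))\le C(|P|+|W|^{1/2})=O(\varepsilon)$. This yields $d_{\sr}(e,\exp(\varepsilon Z))\le\sqrt{\varepsilon\mathfrak F_{G,K}(Z)}+C\varepsilon$, which squares to the $\varepsilon^{3/2}$ remainder. The paper first removes $P_\varepsilon$ by a horizontal segment and then applies \Cref{lem:sp-endpoint-correction}.

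Your lower bound is essentially sound, and no monotonicity is needed. Left translation by $(-\xi,0)$ maps $(\xi,W)$ exactly to $(0,W)$, so $d_0(e,(0,\varepsilon Z+\zeta))\le d_{\sr}(e,\exp(\varepsilon Z))+|\xi|$. With $|\xi|=O(\varepsilon^{3/2})$ and $d_{\sr}=O(\varepsilon^{1/2})$, squaring costs only $O(\varepsilon^2)$. This is the group-theoretic form of the paper's closing of $x_\varepsilon$ by subtracting $sX_\varepsilon$.

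Finally, your justification of the Lipschitz property is wrong: $\mathfrak F_{G,K}$ is not a seminorm quotient. $\Psi_{\pLie}$ puts increasing weights on decreasing singular values and is not convex; for instance $\Phi_2(\diag(1,0))=\Phi_2(\diag(0,1))=4\pi$ while $\Phi_2(\diag(\tfrac12,\tfrac12))=6\pi$. The Lipschitz bound instead comes from Hoffman--Wielandt, as in \Cref{prop:sp-regularity}.
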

	
	The function $\mathfrak F_{G,K}$ defining the leading coefficient is attained, globally Lipschitz, and equivalent to the Euclidean norm on $\kLie$; see \Cref{prop:sp-regularity}. After choosing orthonormal bases, \eqref{eq:sp-finite-objective} expresses its value as the minimum of an explicit function of finitely many real variables, and the search can be restricted to a bounded cube. Evaluating this function on a finite grid gives the rigorous lower and upper bounds in \eqref{eq:sp-grid-enclosure}, whose gap is controlled by the grid spacing. This formula on the tangent free two-step nilpotent Lie group gives the weighted singular-value solution of the Gaveau-Brockett problem \cite{Gaveau,Brockett1982}; Li and Zhang established an exact distance description for arbitrary two-step groups \cite{LiZhang2021}.
	
	The diagonal specialization determines the local spectral invariants. Our third objective is to obtain a leading-term asymptotic for the heat kernel in terms of local invariants.
	\begin{theorem}
		For every compact connected two-step pair there are $c>0$ and $N\ge0$ such that, as $t\downarrow0$,
		\[
		p_t(e)=C_{G,L}t^{-Q/2}e^{\beta_{G,L}t}
		+O(t^{-N}e^{-c/t}).
		\]
		Moreover, with the orthonormal bases $Y_a$ of $\lLie$ and $X_i$ of $\pLie$ fixed in \Cref{sec:preliminaries}, and with all norms induced by the fixed invariant inner product,
		\[
		\beta_{G,L}
		=\frac{\operatorname{Scal}_G-\operatorname{Scal}_L}{6}
		=\frac1{12}\sum_{a,i}|[Y_a,X_i]|^2
		+\frac1{24}\sum_{i,j}|[X_i,X_j]|^2,
		\]
		and $C_{G,L}$ admits the root-theoretic integral and the absolutely convergent positive multiple Dirichlet series of \Cref{zeta:complete-local-constants}. Consequently the formulas determine every local heat coefficient, every zeta residue, and every nonpositive zeta value.
	\end{theorem}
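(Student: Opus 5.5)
Starting from the inversion representation of the first theorem, I put $g=e$ and insert the signed affine lattice formula of \Cref{thm:heat-kernel-image} for $q_t^{\C}(e^{iY})$. Then \Cref{thm:cartan-general} reduces the integral over $\lLie$ to an integral over the Cartan subalgebra $\tLie$ of $L$, with Weyl-type Jacobians. The lattice formula writes $q_t^{\C}(e^{iY})$ as a sum over the exponential lattice $\Gamma\subset\tLie_G$, and each term is a Gaussian in $H+iY$ times a ratio of root-theoretic sine-type factors. The plan is to split this sum into two parts.

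\textbf{The $\gamma=0$ term.} This term gives an integral that is exactly Gaussian in the rescaled variables. Substituting $Y=tU$ (and the vertical scaling of order $t$) shows that the $t$-dependence factors out completely. The result is $t^{-Q/2}$ times a constant, times an exponential $e^{\beta t}$. This exponential collects three things: $e^{-t|\rho_L|^2}$ from the inversion formula, $e^{t|\rho_G|^2}$ from the holomorphic continuation of $q_t$, and the Gaussian identity relating the $\rho$-shifts. The constant itself is a $t$-independent root-theoretic integral; this is $C_{G,L}$. Expanding the hyperbolic-sine factors as a positive geometric series gives the multiple Dirichlet series.

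To get $\beta_{G,L}$, I use Freudenthal's strange formula in the form $|\rho_G|^2=\operatorname{Scal}_G/6$, up to the normalization fixed by the invariant inner product. This gives $\beta_{G,L}=|\rho_G|^2-|\rho_L|^2=(\operatorname{Scal}_G-\operatorname{Scal}_L)/6$. Next I expand the scalar curvatures of the bi-invariant metrics as $\frac14\sum|[E_\alpha,E_\beta]|^2$ over the respective bases, writing $\gLie=\lLie\oplus\pLie$. The $[\lLie,\lLie]$ part cancels against $\operatorname{Scal}_L$. The mixed terms $[Y_a,X_i]$ appear twice, and the $[X_i,X_j]$ terms appear once. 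This yields the stated coefficients $1/12$ and $1/24$.

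\textbf{The $\gamma\ne0$ terms.} For these I bound the integrand by $e^{-(|\gamma|^2-\text{correction})/(4t)}$, using the shortest lattice length $\ell_0>0$. The main obstacle is that the complexified kernel grows in $Y$ like $e^{|Y|^2/(4t)}$, which exactly balances the Gaussian weight $e^{-|Y|^2/(4t)}$. So a naive estimate loses all decay, and absolute convergence must rest on cancellation of these quadratic exponents.

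I would first complete the square in $H+iY-\gamma$. The real part of the exponent then becomes $-|\gamma|^2/(4t)$ plus terms linear in $\langle\gamma,\cdot\rangle$. Shifting the contour in $\tLie$ by a fraction of $\gamma$ (permitted by holomorphy and Gaussian decay on the $\tLie$ integral) should give a uniform factor $e^{-c|\gamma|^2/t}$ with polynomial prefactors $t^{-N}$. Summing over $\gamma$ then gives $O(t^{-N}e^{-c/t})$.

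\textbf{Consequences.} Finally, the remainder is exponentially small, so $p_t(e)\sim C_{G,L}t^{-Q/2}\sum_k\beta_{G,L}^kt^k/k!$ is the complete asymptotic series. The heat trace of the left-invariant operator is $\vol(G)\,p_t(e)$. The Mellin transform then places the poles of the reduced zeta function at $s=Q/2-k$, with residues $\vol(G)\,C_{G,L}\beta_{G,L}^k/(k!\,\Gamma(s))$. The values at nonpositive integers come from the coefficients $k=Q/2+n$ when $Q$ is even, and vanish otherwise, apart from the $-\dim\ker$ correction.
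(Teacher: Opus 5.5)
Your overall architecture matches the paper's: Hall inversion at $g=e$, the affine lattice formula, a split into $\gamma=0$ and $\gamma\neq0$, and then a Mellin transform. Both halves of the split, however, have a genuine gap.

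\textbf{Main term.} You misdescribe how the $\gamma=0$ term produces $t^{-Q/2}$. In doing so you skip the one place where the two-step hypothesis is indispensable.

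At $Z=iH$ the zero summand of the image formula is $\vol(G)e^{t|\rho_G|^2}(4\pi t)^{-\dim G/2}j_G^{\nc}(H)^{-1/2}e^{+|H|^2/(4t)}$. Hall's weight $e^{-|H|^2/(4t)}$ cancels this Gaussian exactly, as you yourself observe for $\gamma\neq0$. So no Gaussian in $H$ survives and no rescaling $Y=tU$ is involved. The integrand is the $t$-independent function $\mathcal D_{G,L}=(j_L^{\nc}/j_G^{\nc})^{1/2}$. The power $t^{-Q/2}$ comes only from $(4\pi t)^{-\dim G/2}(4\pi t)^{-\dim L/2}$, and $e^{\beta_{G,L}t}$ comes only from $e^{t(|\rho_G|^2-|\rho_L|^2)}$; there is no further ``Gaussian identity''.

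Hence the finiteness of $C_{G,L}$ is not automatic. It rests on the exponential decay \eqref{zeta:relative-decay} of $\mathcal D_{G,L}$. That decay holds because the two-step condition makes $Y\mapsto\operatorname{ad}(Y)|_{\pLie}$ injective, so $\Tr|J_Y|\ge c|Y|$. The same injectivity makes the isotropy weights span $\tLie^*$, which is what makes the chambers pointed in the Dirichlet-series evaluation. Your argument never invokes the two-step condition and leans on a Gaussian decay that is not there.

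Deriving $\operatorname{Scal}_H=6|\rho_H|^2$ from Freudenthal's strange formula is a fine alternative to the paper's comparison with the first heat coefficient. Both sides scale identically on each simple ideal and vanish on the center, so it holds for every bi-invariant metric.

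\textbf{Remainder.} For $\gamma\ne0$ the obstacle you name is not one, and the real one is missed. On the real slice the cross term is the pure phase $e^{-i\langle\gamma,H\rangle/(2t)}$. So after the exact cancellation each summand already has modulus $e^{-|\gamma|^2/(4t)}$ times an algebraic factor, and completing the square achieves nothing.

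The contour shift is also not licensed:
\begin{itemize}
\item no Gaussian decay in $H$ remains;
\item single summands are not entire, since their denominator $\mathscr S_G(iH)$ vanishes on the complex hyperplanes $\alpha(H)\in2\pi i\Z$;
\item an imaginary shift turns the harmless phase into the real factor $e^{\langle\gamma,\operatorname{Im}H\rangle/(2t)}$;
\item $\gamma\in\Lambda_G\subset\tLie_G$ need not lie in $\tLie$.
\end{itemize}

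What actually blocks a termwise bound is the denominator. For a single $\gamma\ne0$, the quotient $\Pi_G(iH+\gamma)/\Pi_G(iH)$ that remains after factoring out $j_G^{\nc}(H)^{-1/2}$ has nonremovable singularities along the root hyperplanes of $G$.

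The missing idea is the paper's grouping of lattice vectors into $\Wt_G$-orbits. Each orbit numerator is Weyl anti-invariant, hence divisible by $\Pi_G$. Iterating the one-root division formula \eqref{zeta:single-root-division} bounds the quotient by derivatives of the numerator, and a derivative landing on the phase costs a factor $|\gamma|/t$. This gives \eqref{zeta:ambient-orbit-bound} and is the true source of $t^{-N}$. Lattice counting then yields $|R_t(Y)|\le Ct^{-N}e^{-c/t}(1+|Y|)^M$ uniformly for $Y\in\gLie$, after conjugating into $\tLie_G$. The polynomial growth in $Y$ is absorbed by the decay of $\mathcal D_{G,L}$, again not by any Gaussian.

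Finally, with Haar probability measure, $\Tr e^{t\Delta_{\pLie}}=p_t(e)$ by \eqref{zeta:heat-trace-identity}, and $\vol(G)$ is already inside $C_{G,L}$. Your extra factor $\vol(G)$ is therefore wrong. The residue at $s=Q/2-\ell$ is $C_{G,L}\beta_{G,L}^{\ell}/(\ell!\,\Gamma(Q/2-\ell))$.
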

	
	The leading term agrees with the general nilpotentization theorem of Colin de Verdi\`ere, Hillairet, and Tr\'elat \cite{ColinHillairetTrelat2021}. Compact Lie pairs lie between two extreme cases. For a general filtered manifold,
	\[
	p_t(x,x)\sim t^{-Q/2}\sum_{j\ge0}a_j(x)t^{j/2},
	\]
	with a sequence of local coefficients \cite{DaveHaller}. On a compact quotient of a graded nilpotent group, on the other hand, homogeneity gives only
	\[
	p_t^0(e)=t^{-Q/2}p_1^0(e),
	\]
	by a general result of Fischer \cite{Fischer2022}. In contact, quaternionic contact, and H-type settings, the first heat coefficients have been related explicitly to sub-Riemannian curvature invariants \cite{Barilari2013,Laaroussi2021,BauerMarkinaLaaroussiVegaMolino2026}. Compact Lie pairs lie between these cases:
	\[
	p_t(e)
	=C_{G,L}t^{-Q/2}
	\sum_{\ell\ge0}\frac{\beta_{G,L}^{\ell}}{\ell!}t^\ell
	+O(t^{-N}e^{-c/t}).
	\]
	Thus higher algebraic terms may occur, unlike in the nilpotent model, but they are all determined by the two constants $C_{G,L}$ and $\beta_{G,L}$, unlike on a general filtered manifold; see \eqref{zeta:ambient-small-time-estimate} and \eqref{zeta:ambient-small-time-bound}.
	
	As the last objective of this article, we study the zeta-regularized determinant. Set
	\[
	\zeta_{\pLie}(s)=\Tr'((-\Delta_{\pLie})^{-s}),
	\qquad
	\det\nolimits_\zeta'(-\Delta_{\pLie})=e^{-\zeta_{\pLie}'(0)}.
	\]
	
	\begin{theorem}
		Assume that $(G,L)$ is a two-step pair and that $G$ is simply connected. Then, for every $0<\tau\le1$,
		\[
		\log\det\nolimits_\zeta'(-\Delta_{\pLie})
		=\gamma_{\mathrm E}+\log\tau
		-C_{G,L}\mathscr A_{Q/2,\tau}(\beta_{G,L})
		-\mathcal R_{\mathrm{lat}}(\tau)
		-\mathcal R_{\mathrm{spec}}(\tau).
		\]
		The local series $\mathscr A_{Q/2,\tau}$ depends only on $Q/2$, $\tau$, and $\beta_{G,L}$ and is given explicitly in \Cref{zeta:section}. The terms $\mathcal R_{\mathrm{lat}}$ and $\mathcal R_{\mathrm{spec}}$ are absolutely convergent sums over nonzero coroot lattice orbits and positive branching eigenvalues, respectively. The expression is independent of $\tau$ and satisfies the exponential truncation bounds in \eqref{zeta:lattice-tail} and \eqref{zeta:spectral-tail}.
	\end{theorem}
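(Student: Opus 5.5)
The proof rests on the standard Mellin-transform representation of $\zeta_{\pLie}'(0)$, split at a cutoff $\tau\in(0,1]$. Since $G$ is compact and $\Delta_{\pLie}$ is hypoelliptic and bi-invariant, the spectrum is discrete. The kernel of $\Delta_{\pLie}$ consists of constants: by H\"ormander's condition and connectedness, any $f$ with $\sum\|\widetilde X_if\|^2=0$ is constant. The heat trace is $\Tr e^{t\Delta_{\pLie}}=\vol(G)\,p_t(e)$; normalize Haar measure so that $\vol(G)=1$. Put $\theta(t)=p_t(e)-1$. Then
\[
\zeta_{\pLie}(s)=\frac1{\Gamma(s)}\int_0^\tau t^{s-1}\theta(t)\,dt+\frac1{\Gamma(s)}\int_\tau^\infty t^{s-1}\theta(t)\,dt .
\]
The second integral is entire because $\theta(t)\le C e^{-\lambda_1 t}$ for $t\ge\tau$. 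Its derivative at $s=0$ is $\int_\tau^\infty t^{-1}\theta(t)\,dt=\sum_{\lambda>0}m_\lambda E_1(\lambda\tau)$. Here $\lambda$ runs over the positive eigenvalues, which come from the branching decomposition of $\widehat G$ restricted to $L$: on an isotypic component, $-\Delta_{\pLie}$ acts by the difference of the Casimir eigenvalues of $G$ and $L$. This sum is the candidate for $\mathcal R_{\mathrm{spec}}(\tau)$. Its exponential truncation bound follows from $E_1(x)\le e^{-x}/x$ together with polynomial (Weyl-type) growth of the multiplicities.

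On $(0,\tau)$ I use the heat trace expansion of the previous theorem in its exact form. For simply connected $G$, the exponential lattice is the coroot lattice, and the signed affine lattice formula of \Cref{thm:heat-kernel-image} together with \Cref{thm:cartan-general} gives
\[
p_t(e)=C_{G,L}t^{-Q/2}e^{\beta_{G,L}t}+\Theta_{\mathrm{lat}}(t).
\]
Here $\Theta_{\mathrm{lat}}(t)$ is the sum over nonzero coroot-lattice orbits, and it is $O(t^{-N}e^{-c/t})$. The key step is to insert this decomposition and treat the terms separately.

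\textbf{Local term.} Expand $e^{\beta t}=\sum_\ell \beta^\ell t^\ell/\ell!$ and integrate termwise:
\[
\int_0^\tau t^{s-1-Q/2+\ell}\,dt=\frac{\tau^{s-Q/2+\ell}}{s-Q/2+\ell}.
\]
Multiplying by $1/\Gamma(s)=s+\gamma_{\mathrm E}s^2+\dots$ and differentiating at $s=0$ gives $\sum_\ell \frac{\beta^\ell}{\ell!}\frac{\tau^{\ell-Q/2}}{\ell-Q/2}$. If $Q$ is even, the term with $\ell=Q/2$ instead produces a pole. Its residue times $\gamma_{\mathrm E}$ and $\log\tau$ contributions must then be included. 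These pieces define $\mathscr A_{Q/2,\tau}(\beta)$, a series that converges absolutely because the factorials win.

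\textbf{Constant term.} The $-1$ in $\theta$ contributes $-\frac1{\Gamma(s)}\frac{\tau^s}{s}$. Since $\frac1{\Gamma(s)s}=1+\gamma_{\mathrm E}s+O(s^2)$, its $s$-derivative at $0$ is $-(\gamma_{\mathrm E}+\log\tau)$. With the sign convention $\log\det=-\zeta'(0)$, this yields the $\gamma_{\mathrm E}+\log\tau$ in the formula.

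\textbf{Lattice term.} $\Theta_{\mathrm{lat}}$ is smooth and exponentially flat at $0$, so $\frac{d}{ds}\big|_{s=0}$ gives $\int_0^\tau t^{-1}\Theta_{\mathrm{lat}}(t)\,dt$. This is $\mathcal R_{\mathrm{lat}}(\tau)$, a sum over nonzero coroot orbits. Its tail bound comes from the Gaussian factor $e^{-|H|^2/(4t)}$: summed over $|H|\ge R$, it is bounded by $e^{-cR^2/\tau}$ via lattice point counting.

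Independence of $\tau$ is automatic, since the left-hand side does not depend on $\tau$. It can also be checked directly: differentiating in $\tau$ gives $\tau^{-1}\theta(\tau)$ from the lower pieces and $-\tau^{-1}\theta(\tau)$ from the upper piece, and these cancel.

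The main obstacle is justifying that the lattice sum can be interchanged with the Cartan integral and the Mellin integral, with absolute convergence uniform near $t=0$. The Cartan integral in \Cref{thm:cartan-general} involves the holomorphically continued kernel $q_t^{\C}(e^{iY})$ and the factor $j_L^{\nc}(Y)^{1/2}$. These grow like $e^{|Y||H|/(2t)}$ against the Gaussian weight $e^{-|Y|^2/(4t)}$. So I would first complete the square orbit by orbit to obtain an effective decay $e^{-c|H|^2/t}$ with polynomial prefactors. Then Fubini and dominated convergence justify all exchanges, and they also yield \eqref{zeta:lattice-tail}.
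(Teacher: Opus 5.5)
Your outline is the paper's proof. You split the Mellin integral at $\tau$, use the exact lattice decomposition of $p_t(e)$ on $(0,\tau]$ and the branching expansion on $[\tau,\infty)$, and read off the four contributions as the paper does: $C_{G,L}\mathscr A_{Q/2,\tau}(\beta_{G,L})$ from termwise integration (the resonant index $\ell=Q/2$ giving $\gamma_{\mathrm E}+\log\tau$), $-(\gamma_{\mathrm E}+\log\tau)$ from the zero mode, the lattice and spectral terms from $1/\Gamma(s)=s+O(s^2)$, and the spectral tail from $E_1(x)\le e^{-x}/x$ with $p_{\tau/2}(e)-1\le C\tau^{-Q/2}$. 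Your $\tau$-derivative check is also correct. One notational point: Haar measure is a probability measure, so the trace is $p_t(e)$, but do not set $\vol(G)=1$, since in $C_{G,L}$ it is the metric volume.

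The gap is in your paragraph on the main obstacle, which misidentifies the difficulty, so the proposed fix would not give the needed estimate. Write $\gamma$ for your lattice vector $H$. There is no growth $e^{|Y||\gamma|/(2t)}$. With the complex bilinear extension, $\langle iY+\gamma,iY+\gamma\rangle=-|Y|^2+2i\langle Y,\gamma\rangle+|\gamma|^2$. Hence the Gaussian factor of each theta term has modulus $e^{(|Y|^2-|\gamma|^2)/(4t)}$, and the cross term is the pure phase $e^{-i\langle\gamma,Y\rangle/(2t)}$. The weight $e^{-|Y|^2/(4t)}$ is consumed exactly in cancelling $e^{|Y|^2/(4t)}$, so there is no square to complete and no Gaussian left in $Y$. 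Integrability in $Y$ comes instead from $\mathcal D_{G,L}=(j_L^{\nc}/j_G^{\nc})^{1/2}$, whose exponential decay \eqref{zeta:relative-decay} uses the two-step hypothesis.

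The real obstruction to interchanging the lattice sum with the Cartan integral is different. A single lattice vector contributes $\Pi_G(Y-i\gamma)/\Pi_G(Y)$. In general this has non-integrable $1/\alpha(Y)$ singularities along root hyperplanes of $G$ that $\Pi_L(Y)^2$ does not cancel, and $\Pi_G$ may even vanish identically on $\tLie$. The fix has three steps:
\begin{itemize}
\item group the lattice vectors into Weyl orbits;
\item use the anti-invariance of the orbit numerator to remove these singularities;
\item bound the quotient by iterating \eqref{zeta:single-root-division}, where derivatives falling on the phase cost factors $|\gamma|/t$.
\end{itemize}
This gives $|a_{\mathcal O}(t)|\le Ct^{-N}(1+\ell_{\mathcal O})^A$, which is \eqref{zeta:orbit-coefficient-bound}. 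That bound, together with polynomial lattice counting and a splitting of $e^{-\ell^2/(4t)}$ that reserves a factor $e^{-\ell^2/(8t)}$ for the cutoff, does three things. It justifies Fubini. It identifies your $\int_0^\tau t^{-1}\Theta_{\mathrm{lat}}(t)\,\mathrm dt$ with the absolutely convergent orbit sum \eqref{zeta:lattice-determinant-term}. It gives \eqref{zeta:lattice-tail} with its $\tau^{-Q/2-N_1}$ prefactor, which your $e^{-cR^2/\tau}$ omits.

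The determinant identity itself survives if you define $\mathcal R_{\mathrm{lat}}(\tau)$ by your integral, because that only needs the exponential flatness of $\Theta_{\mathrm{lat}}$ from \Cref{zeta:complete-local-heat}. The statement's orbit-sum and truncation claims, however, need the orbit-wise estimate, not completion of the square.
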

	The two remainders are given exactly by \eqref{zeta:lattice-determinant-term} and \eqref{zeta:spectral-determinant-term}. The formula separates the determinant into one local term and two absolutely convergent global terms. The local term $-C_{G,L}\mathscr A_{Q/2,\tau}(\beta_{G,L})$ depends on the nilpotent tangent model through $C_{G,L}$ and on the scalar curvature difference through $\beta_{G,L}$. The global part is split in the manner of Ewald \cite{Ewald1921}: $-\mathcal R_{\mathrm{lat}}(\tau)$ is the sum over nonzero exponential lattice orbits, while $-\mathcal R_{\mathrm{spec}}(\tau)$ is the sum over the positive spectrum with its branching multiplicities. The term $\gamma_{\mathrm E}+\log\tau$ comes from removing the zero mode.
	
	This splitting is analogous to the classical Ewald expansions of Epstein zeta functions \cite{ChakrabortyKanemitsuTsukada2016}. Related formulas occur for products with a circle and for hyperbolic surfaces \cite{BauerFurutani2010,StrohmaierUski2013}. Here the interval $(0,\tau]$ is evaluated using the compact group exponential lattice formula, while $[\tau,\infty)$ is evaluated using the Peter-Weyl branching expansion. The two tail estimates quantify the truncation error and give the balancing choice of $\tau$ in \eqref{zeta:balanced-splitting-time}.
	
	We specialize these results to block subgroups of $\SU(N)$. This family includes semisimple block subgroups, full block Levi subgroups, and connected subtori of the block scalar factor. In the two-block full Levi case one recovers the compact symmetric pairs of type AIII\@. For the one-column family
	\[
	(G,K)=\bigl(\SU(n+1),S(\U(n)\times\U(1))\bigr),
	\]
	the vertical coefficient becomes the weighted eigenvalue functional $\Phi_n$ in \eqref{eq:sp-Phi}, while the two local spectral constants reduce to
	\[
	C_{G,K}=\left(\frac\pi2\right)^{n(n+1)},
	\qquad
	\beta_{G,K}=\frac{n(n+1)}4.
	\]
	This geometry also occurs in time-optimal control of multilevel quantum systems, where the horizontal controls couple all levels to a distinguished one \cite{BoscainChambrionGauthier2002,AlbertiniDAlessandroSheller2020}. The coefficient is attained by the explicit minimizing loop in \eqref{eq:sp-AIII-minimizer}. The right-$\SU(n)$-invariant sector recovers the standard CR sphere spectrum of Baudoin-Wang, while the rank-one member recovers the canonical contact spectrum on $\SU(2)$ used by Baudoin-Bonnefont and Bauer-Furutani; see the final example in \Cref{sec:examples}. 
	
	The paper is organized as follows. \Cref{sec:preliminaries} establishes the heat kernel inversion formula, proves the signed affine lattice expression for the holomorphic compact group heat kernel, and reduces the integral to a Cartan subalgebra. \Cref{sec:distance} derives the global distance minimum and identifies the two-step tangent group. \Cref{sec:sp-vertical} solves the free vertical problem, descends it through the bracket quotient, and proves the uniform compact vertical expansion. \Cref{zeta:section} determines the local heat and zeta data and the zeta-regularized determinant. \Cref{sec:examples} treats the concrete examples and their reductions to previously known cases.
	
	\section{Preliminaries and heat kernel representations}
	\label{sec:preliminaries}
	
	Throughout, $G$, $K$, and $L\subset K$ are as in the Introduction, and $\langle\cdot,\cdot\rangle$ is the fixed invariant inner product on $\gLie$. Let $\lLie=\Lie(L)$ and let $\pLie=\lLie^\perp$ be its orthogonal complement, so that
	\[
	\gLie=\lLie\oplus\pLie.
	\]
	Choose orthonormal bases
	\[
	Y_1,\dots,Y_r \text{ of }\lLie,
	\qquad
	X_1,\dots,X_m \text{ of }\pLie.
	\]
	For $X\in\gLie$, let $\widetilde X$ denote the left-invariant vector field
	\[
	(\widetilde X f)(g):=\left.\frac{d}{ds}\right|_{s=0}f(g e^{sX}).
	\]
	Define
	\[
	\Delta_{\lLie}:=\sum_{a=1}^{r}\widetilde Y_a^{\,2},
	\qquad
	\Delta_{\pLie}:=\sum_{i=1}^{m}\widetilde X_i^{\,2},
	\qquad
	\Delta_G:=\Delta_{\lLie}+\Delta_{\pLie}.
	\]
	Thus $\Delta_G$ is the bi-invariant Laplacian for the chosen metric. Haar measures on compact groups have total mass one, Lie algebras carry the Lebesgue measure induced by the fixed inner product, and $\vol(G)$ denotes the metric Riemannian volume of $G$.
	
	Fix a maximal torus $T\subset L$ with Lie algebra $\tLie$. We use real compact roots: for $H\in\tLie$, a complex root restricting as $\beta(H)=i\alpha(H)$ is represented by $\alpha\in\tLie^*$. For a positive system $R_L^+$, put
	\[
	\rho_L:=\frac12\sum_{\alpha\in R_L^+}\alpha,
	\qquad
	j_L(H):=\det\!\left(\frac{1-e^{-\operatorname{ad}H}}{\operatorname{ad}H}\right),
	\qquad
	j_L^{\nc}(Y):=j_L(iY).
	\]
	For $Y\in\lLie$, $j_L^{\nc}(Y)>0$; its square root below is the positive analytic one, normalized to one at the origin. The metric on $\tLie$ supplies the norm on $\tLie^*$.
	
	For every compact connected group, the heat kernel image formula is indexed by the actual exponential lattice. Choose a maximal torus $T_G\subset G$ containing $T$, with Lie algebra $\tLie_G\supset\tLie$, Weyl group $\Wt_G$, and positive real roots $R_G^+\subset\tLie_G^*$ in the same convention. Extend the metric complex bilinearly and write $\tLie_{G,\C}=\tLie_G\otimes_\R\C$; every root is extended complex linearly to $\tLie_{G,\C}$. Choose a faithful unitary representation $G\to\U(N)$ and identify $G$ with its image. Let $G_\C$ be the connected complex subgroup of $\operatorname{GL}(N,\C)$ generated by $G$ and $\exp(i\gLie)$, and let $L_\C$ be the connected complex subgroup generated by $L$ and $\exp(i\lLie)$. Then $L_\C\subset G_\C$, with Lie algebras $\gLie_\C$ and $\lLie_\C$, respectively. All holomorphic continuations below use these compatible matrix complexifications. Set
	\begin{equation*}
		\rho_G:=\frac12\sum_{\alpha\in R_G^+}\alpha,
		\qquad
		\Lambda_G:=\ker(\exp\colon\tLie_G\to T_G),
		\qquad
		\Pi_G(Z):=\prod_{\alpha\in R_G^+}\alpha(Z),
	\end{equation*}
	\begin{equation*}
		\mathscr S_G(Z):=
		\prod_{\alpha\in R_G^+}2\sin\!\left(\frac{\alpha(Z)}2\right),
		\qquad Z\in\tLie_{G,\C}.
	\end{equation*}
	Empty products equal $1$. In the same determinant convention as $j_L$, put
	\begin{equation*}
		j_G(X):=\det\!\left(\frac{1-e^{-\operatorname{ad}X}}
		{\operatorname{ad}X}\right),
		\qquad
		j_G^{\nc}(Y):=j_G(iY).
	\end{equation*}
	Thus the superscript $\nc$ stands for \emph{noncompact}: $j_G^{\nc}$ is the imaginary slice restriction of the analytic Jacobian, while $j_G$ denotes its real slice restriction. The notation $j_L^{\nc}$ has the corresponding meaning for $L$. The canonical analytic square root on the complexified Cartan subalgebra $\mathfrak t_{G,\C}$, normalized to equal $1$ at the origin, is
	\begin{equation}\label{eq:ambient-jacobian-square-root}
		j_G(Z)^{1/2}
		=\frac{\mathscr S_G(Z)}{\Pi_G(Z)}
		=\prod_{\alpha\in R_G^+}
		\frac{\sin(\alpha(Z)/2)}{\alpha(Z)/2},
	\end{equation}
	with the removable singularities understood by continuity. On the imaginary slice this convention gives the positive branch. We also write
	\begin{equation*}
		w(x):=\frac{x/2}{\sinh(x/2)},
		\qquad w(0):=1,
	\end{equation*}
	for the scalar reciprocal factor that occurs below.
	
	Since $\Delta_G$ is the Casimir operator and
	\[
	\Delta_{\pLie}=\Delta_G-\Delta_{\lLie},
	\]
	the operators $\Delta_G$, $\Delta_{\lLie}$, and $\Delta_{\pLie}$ commute on $C^\infty(G)$. The operator $\Delta_{\pLie}$ is the horizontal sub-Laplacian associated with $\pLie$.
	
	We say that $\pLie$ is \emph{bracket generating} when $\Lie(\pLie)=\gLie$, where $\Lie(\pLie)$ is the Lie subalgebra generated by $\pLie$. In the present setting, this is equivalent to
	\[
	\gLie=\pLie+[\pLie,\pLie].
	\]
	The reverse implication is immediate; conversely, metric invariance shows that any $Y\in\lLie$ orthogonal to $\pr_{\lLie}[\pLie,\pLie]$ is orthogonal to $\pLie$ and all its iterated brackets, hence vanishes when $\Lie(\pLie)=\gLie$. We call such a pair $(G,L)$ \emph{two-step}. Under this condition, the self-adjoint semigroup has a distribution kernel, and H\"ormander's hypoellipticity theorem \cite[Theorem~1.1]{Hormander1967} makes it smooth for $t>0$:
	\[
	p_t=e^{t\Delta_{\pLie}}\delta_e,
	\qquad t>0,
	\]
	and left invariance gives the right convolution formula
	\[
	(e^{t\Delta_{\pLie}}f)(g)
	=
	\int_G p_t(g^{-1}h)f(h)\,dh.
	\]
	Equivalently, the integral kernel is $K_t(g,h)=p_t(g^{-1}h)$.
	
	Let $q_t$ denote the ordinary heat kernel of $e^{t\Delta_G}$, normalized in the same way:
	\[
	q_t=e^{t\Delta_G}\delta_e.
	\]
	The bi-invariance of $\Delta_G$ makes $q_t$ central.
	
	Write $\Delta_L$ for the bi-invariant Laplacian on $L$ defined by the restricted metric. The compact group Segal-Bargmann transform gives, for every $t>0$, a unique holomorphic continuation
	\[
	q_t^{\C}:G_{\C}\to \C
	\]
	of the ordinary heat kernel $q_t$; see \cite[Theorem~1]{Hall1994}.
	
	For the semigroup $e^{t\Delta_L}$, Hall's inversion formula \cite[Theorem~1]{Hall1997} is used with $\Delta_L$. In these conventions it has the factor $e^{-|\rho_L|^2t}$ and the Gaussian density
	\[
	(4\pi t)^{-\dim L/2}e^{-|Y|^2/(4t)}.
	\]
	Explicitly, if $f\in C^\infty(L)$ and $F$ is the holomorphic continuation of $e^{t\Delta_L}f$ to $L_{\C}$, then
	\begin{equation}\label{eq:seg-bargmann-inversion}
		f(x)=e^{-|\rho_L|^2t}
		\int_{\lLie}F(x e^{iY})\,j_L^{\nc}(Y)^{1/2}
		\frac{e^{-|Y|^2/4t}}{(4\pi t)^{\dim L/2}}\,\mathrm{d}Y,
		\qquad x\in L.
	\end{equation}
	Hall's inversion theorem also gives the absolute convergence of this integral.
	
	\begin{lemma}\label{lem:strong-commutation}
		Let \(\mathscr P(G)\) be the algebraic Peter-Weyl sum, and set
		\[
		A=-\Delta_{\lLie},\qquad B=-\Delta_{\pLie},\qquad C=-\Delta_G
		\quad\hbox{on }\mathscr P(G).
		\]
		These operators are essentially self-adjoint, their nonnegative closures satisfy \(C=A+B\), and the closures of \(A\) and \(B\) strongly commute. Consequently,
		\begin{equation}\label{eq:strong-semigroup-factorization}
			e^{t\Delta_{\lLie}}e^{t\Delta_{\pLie}}=e^{t\Delta_G},
			\qquad t\ge0,
		\end{equation}
		and, in \(\mathcal D'(G)\),
		\begin{equation}\label{eq:kernel-semigroup-identity}
			e^{t\Delta_{\lLie}}p_t=q_t.
		\end{equation}
	\end{lemma}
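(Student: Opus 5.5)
The plan is to diagonalize everything simultaneously through the Peter--Weyl decomposition, refined by the branching of $G$-representations to $L$. By Peter--Weyl, $L^2(G)=\widehat\bigoplus_{\pi\in\widehat G}V_\pi\otimes V_\pi^*$. The left-invariant fields $\widetilde X$ act on matrix coefficients $g\mapsto\langle \pi(g)u,v\rangle$ through $d\pi(X)$ acting on $u$, so $\mathscr P(G)$ is invariant under $A$, $B$, and $C$.

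On the isotypic block of $\pi$, the operator $C$ acts as the scalar $c_\pi=-\sum d\pi(X)^2$, the Casimir eigenvalue, which is nonnegative. Next decompose $V_\pi|_L=\bigoplus_\mu m_{\pi\mu}W_\mu$. The operator $-\sum_a d\pi(Y_a)^2$ is the Casimir of $L$ for the restricted metric, so it acts on $W_\mu$ by a scalar $c_\mu^L\ge0$. Hence $A$ acts on $V_\pi\otimes V_\pi^*$ as $(\bigoplus_\mu c^L_\mu\,\mathrm{id}_{W_\mu})\otimes\mathrm{id}$, and $B=C-A$ acts as $(c_\pi-c^L_\mu)$ on the $\mu$-component.

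This gives a countable orthogonal decomposition of $L^2(G)$ into finite-dimensional subspaces $H_{\pi,\mu}$, which span $\mathscr P(G)$ algebraically. On each $H_{\pi,\mu}$ all three operators act as real scalars.

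Nonnegativity of $B$ comes from the identity $\langle Bf,f\rangle=\sum_i\|\widetilde X_if\|^2$, using skew-adjointness of $\widetilde X_i$ with respect to Haar measure. Alternatively, $c_\pi-c_\mu^L\ge0$ follows directly since $-\sum_i d\pi(X_i)^2\ge0$.

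A symmetric operator that is diagonal with real eigenvalues on an orthonormal basis of a dense domain consisting of finite linear combinations is essentially self-adjoint. Indeed, $\pm i$ minus the operator has dense range, because each $H_{\pi,\mu}$ lies in the range. The closures are then the maximal multiplication operators
\[
\bar A=\bigoplus a_{\pi\mu},\qquad \bar B=\bigoplus b_{\pi\mu},\qquad \bar C=\bigoplus(a_{\pi\mu}+b_{\pi\mu}),
\]
so they are functions of one common spectral decomposition. Their spectral projections therefore commute, which is exactly strong commutation.

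The identity $\bar C=\bar A+\bar B$ needs a short argument. The operator sum $\bar A+\bar B$, defined on $D(\bar A)\cap D(\bar B)$, is contained in $\bar C$. Conversely, for $f\in D(\bar C)$, the estimate $a,b\le a+b$ (with $a,b\ge0$) gives $\sum a^2|f_{\pi\mu}|^2\le\sum(a+b)^2|f_{\pi\mu}|^2<\infty$. Hence $f\in D(\bar A)\cap D(\bar B)$, and nonnegativity of the eigenvalues is exactly what is used here. The semigroup factorization \eqref{eq:strong-semigroup-factorization} then follows from the functional calculus, since $e^{-ta}e^{-tb}=e^{-t(a+b)}$ holds blockwise.

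For \eqref{eq:kernel-semigroup-identity}, pair with test functions in $\mathscr P(G)$ and pass to $C^\infty(G)$ by density. By definition $p_t=e^{t\Delta_{\pLie}}\delta_e$, interpreted distributionally via the transpose. Write $\delta_e=\sum_\pi d_\pi\chi_\pi$ in $\mathcal D'(G)$, with Fourier coefficients of polynomial growth. The operators $e^{t\Delta_{\lLie}}$ and $e^{t\Delta_{\pLie}}$ act blockwise with bounded multipliers, so they extend continuously to $\mathcal D'(G)$. The blockwise identity then gives
\[
e^{t\Delta_{\lLie}}e^{t\Delta_{\pLie}}\delta_e=e^{t\Delta_G}\delta_e=q_t.
\]

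The main obstacle is bookkeeping rather than analysis. Convergence in $\mathcal D'(G)$ requires that the multipliers be uniformly bounded, which holds since they are at most $1$, so smoothing is never needed. One must also make sure that left versus right actions are consistent, so that the $L$-Casimir decomposition is on the correct tensor factor.
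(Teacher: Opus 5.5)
Your proof is correct and follows essentially the same route as the paper: you simultaneously diagonalize $A$, $B$, $C$ on the finite-dimensional Peter--Weyl summands, take the closures as Hilbert direct sums of commuting diagonal operators, and obtain the semigroup identity blockwise from the functional calculus. Your explicit $L$-branching eigenspaces, your domain check for $\overline C=\overline A+\overline B$, and your extension of the bounded blockwise multipliers to $\mathcal D'(G)$ (in place of the paper's approximate-identity limit) only spell out steps that the paper states briefly.
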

	
	\begin{proof}
		Every Peter-Weyl summand is finite dimensional and invariant under \(A\), \(B\), and \(C\). On such a summand the Casimir \(C\) is scalar, \(A\) is Hermitian, and \(B=C-A\); hence the three restrictions have a common orthonormal eigenbasis. Taking the Hilbert direct sum of these finite-dimensional restrictions defines nonnegative self-adjoint operators \(\overline A\), \(\overline B\), and \(\overline C\). Truncation to finitely many Peter-Weyl summands converges in the graph norm of each operator, so \(\mathscr P(G)\) is a core and \(A,B,C\) are essentially self-adjoint. Their spectral projections are direct sums of commuting finite-dimensional projections; therefore \(\overline A\) and \(\overline B\) strongly commute and \(\overline C=\overline A+\overline B\) in their joint functional calculus. The joint spectral theorem \cite[Theorem~VIII.13]{ReedSimonI} now gives \eqref{eq:strong-semigroup-factorization}.
		
		Let \(\phi_n\in\mathscr P(G)\) be an approximate identity converging to \(\delta_e\) in \(\mathcal D'(G)\). Applying \eqref{eq:strong-semigroup-factorization} to \(\phi_n\), testing against a smooth function, and passing to the limit gives \(e^{t\Delta_{\lLie}}p_t=q_t\) in distributions, which is \eqref{eq:kernel-semigroup-identity}.
	\end{proof}
	
	\begin{theorem}\label{thm:integral-formula}
		Assume that $\pLie$ is bracket generating. Then, for every $g\in G$ and $t>0$,
		\[
		p_t(g)=e^{-|\rho_L|^2t}
		\int_{\lLie} q_t^{\C}(g e^{iY})\,j_L^{\nc}(Y)^{1/2}
		\frac{e^{-|Y|^2/4t}}{(4\pi t)^{\dim L/2}}\,\mathrm{d}Y.
		\]
		The integral is absolutely convergent.
	\end{theorem}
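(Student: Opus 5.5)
The plan is to restrict the identity \eqref{eq:kernel-semigroup-identity} of \Cref{lem:strong-commutation} to the left cosets $gL$ and then apply Hall's inversion formula \eqref{eq:seg-bargmann-inversion} on $L$ to each of these restrictions. Fix $g\in G$ and set $f_g(x):=p_t(gx)$ for $x\in L$. Since $\pLie$ is bracket generating, $p_t$ is smooth by H\"ormander's theorem, so $f_g\in C^\infty(L)$. The operator $\Delta_{\lLie}=\sum_a\widetilde Y_a^{\,2}$ is built from left-invariant fields that are tangent to the cosets $gL$. Its restriction to $gL$, pulled back by $x\mapsto gx$, is the Laplacian $\Delta_L$ of the restricted metric. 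The Casimir of $L$ for this metric is exactly $\sum_a Y_a^2$, because $Y_a$ is an orthonormal basis of $\lLie$.

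The key step is to show that the heat flow $e^{s\Delta_{\lLie}}$ acts fiberwise:
\[
(e^{s\Delta_{\lLie}}F)(gx)=\bigl(e^{s\Delta_L}F(g\,\cdot)|_L\bigr)(x),\qquad F\in C^\infty(G).
\]
I would verify this through the convolution representation. One has $e^{s\Delta_{\lLie}}F=F*\nu_s$, where $\nu_s$ is the heat measure of $L$ pushed into $G$, i.e.\ $(e^{s\Delta_{\lLie}}F)(h)=\int_L F(hy)\,q^L_s(y)\,dy$. To check this, note that both sides are semigroups on $C^\infty(G)$ with the same generator on $\mathscr P(G)$, and $\mathscr P(G)$ is a core by \Cref{lem:strong-commutation}. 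Applying this with $F=p_t$ and using $e^{t\Delta_{\lLie}}p_t=q_t$ gives $e^{t\Delta_L}f_g=q_t(g\,\cdot)|_L$.

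Two points need care here. First, \eqref{eq:kernel-semigroup-identity} holds only in $\mathcal D'(G)$, but both sides are continuous: $p_t$ is smooth and the $L$-convolution preserves smoothness. So the identity holds pointwise and its restriction to cosets is legitimate.

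Second, Hall's formula needs the holomorphic continuation of $e^{t\Delta_L}f_g$ to $L_\C$. The candidate is $x\mapsto q_t^{\C}(gx)$ for $x\in L_\C\subset G_\C$. This function is holomorphic on $L_\C$ and agrees with $q_t(g\,\cdot)$ on $L$. Since $L$ is totally real of maximal dimension in the connected group $L_\C$, the identity theorem makes this continuation unique. Therefore it coincides with Hall's continuation $F$ of $e^{t\Delta_L}f_g$.

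Substituting $F(xe^{iY})=q_t^{\C}(gxe^{iY})$ into \eqref{eq:seg-bargmann-inversion} and taking $x=e$ yields the displayed formula for $p_t(g)=f_g(e)$. Absolute convergence is part of Hall's inversion theorem, applied to $f_g\in C^\infty(L)$.

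The main obstacle is the fiberwise identification of $e^{t\Delta_{\lLie}}$ with $e^{t\Delta_L}$ on cosets, together with the passage from the distributional identity to pointwise values. I would handle it with the convolution-by-$L$-heat-measure argument and smoothness, as described above. If the semigroup comparison on the core proves awkward, the fallback is a direct Peter--Weyl check: on each finite-dimensional summand, restriction to $gL$ intertwines $\Delta_{\lLie}$ with $\Delta_L$, and this transfers to the exponentials termwise, with convergence controlled by the smoothness of $p_t$.
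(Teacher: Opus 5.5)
Your proposal is correct and follows the paper's proof essentially step by step: you restrict $e^{t\Delta_{\lLie}}p_t=q_t$ to the coset $gL$, identify the holomorphic continuation of $q_t(g\,\cdot)|_L$ with $z\mapsto q_t^{\C}(gz)$ by uniqueness on the maximal totally real $L\subset L_\C$, and evaluate Hall's inversion formula at $x=e$. The only difference is that you justify the fiberwise identity $(e^{s\Delta_{\lLie}}F)(gx)=\bigl(e^{s\Delta_L}F(g\,\cdot)|_L\bigr)(x)$ explicitly, via right convolution with the $L$-heat kernel and the Peter--Weyl core, whereas the paper simply invokes heat flow uniqueness on $L$.
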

	
	\begin{proof}
		Fix $g\in G$ and $t>0$, and define
		\[
		f_{g,t}(l)=p_t(gl),
		\qquad l\in L.
		\]
		For the function $l\mapsto u(gl)$ and $Y\in\lLie$, the intrinsic left-invariant field on $L$ satisfies $\widetilde Y^L(u(g\,\cdot\,))(l)=(\widetilde Yu)(gl)$. Heat flow uniqueness on $L$ gives
		\[
		(e^{s\Delta_L}[u(g\,\cdot\,)])(l)
		=(e^{s\Delta_{\lLie}}u)(gl).
		\]
		Apply this with $u=p_t$ and $s=t$. By \Cref{lem:strong-commutation},
		\[
		e^{t\Delta_{\lLie}}p_t=q_t
		\quad\hbox{in }\mathcal D'(G).
		\]
		Since both sides are smooth for $t>0$, the identity holds pointwise. Consequently
		\[
		e^{t\Delta_L}f_{g,t}(l)=q_t(gl),
		\qquad l\in L.
		\]
		Let $F_{g,t}$ be the holomorphic continuation supplied by Hall's inversion theorem for $e^{t\Delta_L}f_{g,t}$ on $L_{\C}$. The inversion formula at $x=e$ gives
		\[
		p_t(g)=f_{g,t}(e)
		=
		e^{-|\rho_L|^2t}
		\int_{\lLie}F_{g,t}(e^{iY})\,j_L^{\nc}(Y)^{1/2}
		\frac{e^{-|Y|^2/4t}}{(4\pi t)^{\dim L/2}}\,\mathrm{d}Y.
		\]
		
		By the compatible matrix complexifications fixed above, $L_{\C}$ is a connected complex subgroup of $G_{\C}$. The map
		\[
		z\longmapsto q_t^{\C}(gz)
		\]
		is holomorphic on $L_{\C}$. On the real form $L$ it agrees with $e^{t\Delta_L}f_{g,t}$:
		\[
		q_t^{\C}(gl)=q_t(gl)=e^{t\Delta_L}f_{g,t}(l),
		\qquad l\in L.
		\]
		Since $L$ is a maximal totally real submanifold of the connected complex manifold $L_{\C}$, uniqueness of holomorphic continuation gives
		\[
		F_{g,t}(z)=q_t^{\C}(gz),
		\qquad z\in L_{\C}.
		\]
		In particular,
		\[
		F_{g,t}(e^{iY})=q_t^{\C}(g e^{iY}).
		\]
		Substitution proves the formula, and the absolute convergence is the corresponding conclusion of \eqref{eq:seg-bargmann-inversion}.
	\end{proof}
	
	The simply connected semisimple Poisson summation formula is classical. We record the compact connected form needed for the small-time remainder estimate \eqref{zeta:ambient-small-time-bound}. For $\gamma\in\Lambda_G$, every root satisfies $\alpha(\gamma)\in2\pi\Z$, and hence
	\begin{equation}\label{eq:ambient-lattice-sign}
		\varepsilon_G(\gamma):=e^{i\rho_G(\gamma)}\in\{1,-1\},
		\qquad
		\mathscr S_G(Z+\gamma)
		=\varepsilon_G(\gamma)\mathscr S_G(Z).
	\end{equation}
	For $t>0$ define the normally convergent theta series
	\begin{equation}\label{eq:ambient-theta}
		\Theta_{G,t}(Z)
		:=\sum_{\gamma\in\Lambda_G}
		\varepsilon_G(\gamma)\Pi_G(Z+\gamma)
		\exp\!\left(-\frac{\langle Z+\gamma,Z+\gamma\rangle}{4t}\right)
	\end{equation}
	and, initially away from the affine root hyperplanes,
	\begin{equation}\label{eq:ambient-W}
		\mathscr W_{G,t}(Z):=\frac{\Theta_{G,t}(Z)}{\mathscr S_G(Z)}.
	\end{equation}
	
	\begin{theorem}\label{thm:heat-kernel-image}
		For every compact connected $G$ and $t>0$, the apparent singularities in \eqref{eq:ambient-W} are removable. The resulting entire function on $\tLie_{G,\C}$ is invariant under $\Wt_G\ltimes\Lambda_G$, and
		\begin{equation}\label{eq:ambient-complex-image}
			q_t^{\C}(\exp Z)
			=\frac{\vol(G)e^{t|\rho_G|^2}}{(4\pi t)^{\dim G/2}}
			\mathscr W_{G,t}(Z),
			\qquad Z\in\tLie_{G,\C}.
		\end{equation}
		Its restriction to $Z\in\tLie_G$ is the ordinary heat kernel $q_t$.
	\end{theorem}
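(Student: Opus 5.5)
We start from the Peter–Weyl expansion of the central heat kernel and compute its restriction to the maximal torus with the Weyl character formula. For a dominant weight $\lambda$ with $\lambda+\rho_G$ in the set $P_G^{++}$ of regular strictly dominant elements allowed by the exponential lattice $\Lambda_G$, the Casimir eigenvalue is $|\lambda+\rho_G|^2-|\rho_G|^2$. The dimension is $\Pi_G(\lambda+\rho_G)/\Pi_G(\rho_G)$. On $H\in\tLie_G$ the character is
\[
\chi_\lambda(\exp H)=\frac{\sum_{w\in\Wt_G}\epsilon(w)e^{i(w(\lambda+\rho_G))(H)}}{i^{|R_G^+|}\mathscr S_G(H)}
\]
(up to the fixed sign normalization). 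Hence
\[
q_t(\exp H)=e^{t|\rho_G|^2}\sum_{\mu\in P_G^{++}}\frac{\Pi_G(\mu)}{\Pi_G(\rho_G)}\,e^{-t|\mu|^2}\,\frac{\sum_w\epsilon(w)e^{i(w\mu)(H)}}{i^{|R_G^+|}\mathscr S_G(H)}.
\]
Since $\Pi_G$ is $\Wt_G$-skew, we unfold the double sum $\sum_\mu\sum_w$ into a single sum over $\mu$ in the shifted weight set $\rho_G+P_G$. Here $P_G$ is the character lattice of $T_G$, the dual lattice of $\Lambda_G$ (with $2\pi$ normalization). Terms on walls vanish because $\Pi_G(\mu)=0$ there. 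The result is $\sum_{\mu\in\rho_G+P_G}\Pi_G(\mu)e^{-t|\mu|^2+i\mu(H)}$ divided by $\mathscr S_G(H)$, up to constants.

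Next we apply Poisson summation to this sum over the coset $\rho_G+P_G$. The Fourier transform of $\mu\mapsto\Pi_G(\mu)e^{-t|\mu|^2}$ is $\Pi_G$ applied to $i\partial$ acting on a Gaussian. Because $\Pi_G$ is harmonic (a $\Wt_G$-skew polynomial of minimal degree), this gives, up to constants, $\Pi_G(x)e^{-|x|^2/(4t)}(4\pi t)^{-\dim T_G/2}$. The shift by $\rho_G$ produces the character $e^{i\rho_G(\gamma)}=\varepsilon_G(\gamma)$ from \eqref{eq:ambient-lattice-sign}, and the dual lattice is $\Lambda_G$. This yields $q_t(\exp H)=c\,\Theta_{G,t}(H)/\mathscr S_G(H)$ on $\tLie_G$. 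We then identify $c=\vol(G)e^{t|\rho_G|^2}(4\pi t)^{-\dim G/2}$. The covolume of $\Lambda_G$, the factors $\Pi_G(\rho_G)$ and $(2\pi)^{|R_G^+|}$, and the Weyl volume formula for $\vol(G)$ combine into exactly this constant. A cleaner route to the constant is to compare the $\gamma=0$ term as $t\downarrow0$ with the Euclidean asymptotics $q_t(e)\sim\vol(G)(4\pi t)^{-\dim G/2}$, using $\Pi_G(H)/\mathscr S_G(H)\to1$. Since the full lattice sum equals the exact kernel for each $t$, the constant is forced.

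For the analytic statements, we first note that $\Theta_{G,t}$ converges normally on compact subsets of $\tLie_{G,\C}$. The Gaussian $e^{-\langle Z+\gamma,Z+\gamma\rangle/4t}$ is bounded by $e^{-(|\Re Z+\gamma|^2-|\Im Z|^2)/4t}$ times a polynomial, so $\Theta_{G,t}$ is entire. The invariance properties follow by reindexing. Under $Z\mapsto Z+\gamma_0$, both $\Theta_{G,t}$ and $\mathscr S_G$ acquire the factor $\varepsilon_G(\gamma_0)$, using multiplicativity of $\varepsilon_G$. Under $w\in\Wt_G$, both acquire $\epsilon(w)$, since $\Wt_G$ preserves $\Lambda_G$ and $\rho_G$ on $\Lambda_G$ modulo $2\pi\Z$ in the needed way: $\rho_G-w\rho_G$ is a sum of roots. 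So the quotient $\mathscr W_{G,t}$ is $\Wt_G\ltimes\Lambda_G$-invariant.

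\textbf{Removability of the singularities} is the step I expect to be the main obstacle. The zeros of $\mathscr S_G$ lie on the affine hyperplanes $\alpha(Z)\in2\pi\Z$, and they are simple generically. Each such hyperplane is the fixed set of an affine reflection $s$ in $\Wt_G\ltimes\Lambda_G$; I will check that the reflection $s_\alpha$ composed with a translation by a multiple of the coroot lies in this group. By the sign computation above, $\Theta_{G,t}\circ s=-\Theta_{G,t}$, so $\Theta_{G,t}$ vanishes on every zero hyperplane of $\mathscr S_G$. Near a generic point of such a hyperplane the numerator is divisible by the corresponding linear factor. Higher multiplicities, where several hyperplanes meet, are handled by Hartogs: the singular set of the quotient has codimension at least two after cancelling along generic points, so the quotient extends to an entire function.

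Finally we pass to the holomorphic continuation. Both sides of \eqref{eq:ambient-complex-image}, viewed as functions of $Z\in\tLie_{G,\C}$, are holomorphic. They agree on the totally real subspace $\tLie_G$, so they agree everywhere. Here $q_t^{\C}\circ\exp$ is holomorphic as a composition of holomorphic maps on $\tLie_{G,\C}\subset\gLie_\C$. The restriction to $\tLie_G$ recovering $q_t$ is exactly the first step.
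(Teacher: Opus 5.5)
Your proof is correct, but it takes a genuinely different route from the paper. The paper never expands the heat kernel in characters of $G$ itself. It passes to the finite central local isometry cover $\widetilde G=Z_0\times G_{\mathrm{sc}}$ and quotes Poisson summation on the torus factor and Maher's wrapping formula on the simply connected semisimple factor. It then averages over $F=\ker\pi$ and uses the exact sequence \eqref{eq:central-lattice-sequence} to reassemble the cosets of $\widetilde\Lambda$ into $\Lambda_G$. The signs $\varepsilon_G$ appear only when $j_G(H+\gamma)^{-1/2}$ is rewritten as $\Pi_G(H+\gamma)/\mathscr S_G(H+\gamma)$.

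You instead work directly on $G$ via the highest weight theorem and unfold over the $\Wt_G$-stable coset $\rho_G+P_G$. The coset is stable because $\rho_G-w\rho_G$ is a sum of roots. Every strictly dominant element of it is $\lambda+\rho_G$ with $\lambda$ dominant, since $\langle\rho_G,\alpha^\vee\rangle=1$ for simple $\alpha$ and $2\pi\alpha^\vee\in\Lambda_G$ gives integrality. Poisson summation with harmonicity of $\Pi_G$ then finishes, and $\varepsilon_G(\gamma)=e^{i\rho_G(\gamma)}$ emerges naturally as the phase of the $\rho_G$-shift.

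This is self-contained, treats mixed central quotients uniformly, and avoids both the covering argument and the external wrapping formula. The price is that you must match the constant by hand. Your Poisson constant equals $\vol(G)e^{t|\rho_G|^2}(4\pi t)^{-\dim G/2}$ exactly when $\vol(G)=\vol(T_G)\prod_{\alpha\in R_G^+}2\pi/\langle\rho_G,\alpha\rangle$, which is the metric Weyl--Macdonald volume formula. The paper instead inherits the constant from the cited formula.

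Your alternative ``cleaner route'' to the constant needs one more step as written. At $H=0$ the terms with $\gamma\neq0$, divided by $\mathscr S_G(H)$, are individually singular. You must either group them into Weyl orbits and use a division estimate of the type \eqref{zeta:ambient-orbit-bound}, or evaluate at regular $H_t=tH_1$. There they are $O(t^{-|R_G^+|}e^{-c/t})$ relative to the $\gamma=0$ term.

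On the analytic side, your affine reflection argument is sound. The translation part $2\pi k\alpha^\vee$ lies in $\Lambda_G$ and has $\varepsilon_G=1$ because $\langle\rho_G,\alpha^\vee\rangle\in\mathbb Z$. The root system is reduced, so $\mathscr S_G$ vanishes simply at generic points of each hyperplane, and Riemann extension covers the codimension-two intersections. This proves removability intrinsically for the theta quotient.

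The paper's path is shorter. The entire function $\mathscr S_G(Z)q_t^{\C}(\exp Z)-c_t\Theta_{G,t}(Z)$ vanishes on the regular part of $\tLie_G$, hence identically. So the quotient equals $c_t^{-1}q_t^{\C}(\exp Z)$, which is manifestly entire and $\Wt_G\ltimes\Lambda_G$-invariant. Applying your final identity-theorem step to $\mathscr S_G\cdot(q_t^{\C}\circ\exp)$ and $c_t\Theta_{G,t}$, rather than to the quotient, would give removability and invariance for free.
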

	
	\begin{proof}
		Let $Z_0=Z(G)_0$ and let $\pi_{\mathrm{sc}}\colon G_{\mathrm{sc}}\to[G,G]$ be the simply connected cover of the connected derived subgroup. Give $Z_0$ the restricted metric and $G_{\mathrm{sc}}$ the pulled-back metric. Metric invariance gives the orthogonal splitting
		\[
		\gLie=\mathfrak z(\gLie)\oplus[\gLie,\gLie].
		\]
		Thus
		\[
		\widetilde G:=Z_0\times G_{\mathrm{sc}},
		\qquad
		\pi\colon\widetilde G\longrightarrow G,
		\qquad
		\pi(z,\widetilde g)=z\,\pi_{\mathrm{sc}}(\widetilde g),
		\]
		is a finite central local isometry covering. Choose the product maximal torus $\widetilde T\subset\widetilde G$ above $T_G$ and identify their Lie algebras by $\mathrm d\pi$. Write $\widetilde\Lambda=\ker(\exp_{\widetilde G}|_{\Lie(\widetilde T)})$ and let $\widetilde q_t$ be the heat kernel on $\widetilde G$.
		
		Poisson summation on the torus factor and the wrapping formula on the simply connected compact semisimple factor \cite[Theorem~4.13]{Maher} give, for every regular $H\in\Lie(\widetilde T)$,
		\[
		\widetilde q_t(\exp_{\widetilde G}H)
		=\frac{\vol(\widetilde G)e^{t|\rho_G|^2}}
		{(4\pi t)^{\dim G/2}}
		\sum_{\widetilde\gamma\in\widetilde\Lambda}
		\frac{e^{-|H+\widetilde\gamma|^2/(4t)}}
		{j_G(H+\widetilde\gamma)^{1/2}}.
		\]
		Here the root system and Jacobian agree with those of $G$ under the Lie algebra identification. Let $F=\ker\pi$. The group $\widetilde G$ is connected and $F$ is finite and central, hence $F\subset\widetilde T$. Haar probability disintegration over the covering is
		\[
		\int_{\widetilde G}u(\widetilde g)\,\mathrm d\widetilde g
		=\frac1{|F|}\int_G
		\sum_{\pi(\widetilde g)=g}u(\widetilde g)\,\mathrm dg.
		\]
		Applying this identity to the heat semigroup, and then using uniqueness of holomorphic continuation, gives
		\[
		q_t^{G,\C}(\pi z)
		=\frac1{|F|}\sum_{f\in F}\widetilde q_t^{\,\C}(zf),
		\qquad
		\vol(\widetilde G)=|F|\vol(G).
		\]
		Under the common Lie algebra identification, the exponential lattices fit into the exact sequence
		\begin{equation}\label{eq:central-lattice-sequence}
			0\longrightarrow\widetilde\Lambda
			\longrightarrow\Lambda_G
			\xrightarrow{\ H\mapsto\exp_{\widetilde G}(H)\ }
			F\longrightarrow0.
		\end{equation}
		Indeed, $H\in\Lambda_G$ exponentiates in $\widetilde G$ to an element of $F$. The kernel is $\widetilde\Lambda$; surjectivity follows from $F\subset\widetilde T$ by taking a toral logarithm of each $f\in F$. Thus the sum over $F$ replaces $\widetilde\Lambda$ by its cosets in $\Lambda_G$. The factor $|F|^{-1}$ from Haar probability disintegration cancels the identity $\vol(\widetilde G)=|F|\vol(G)$, leaving the metric volume prefactor in \eqref{eq:ambient-complex-image}. Rewriting each reciprocal square root with \eqref{eq:ambient-jacobian-square-root} and \eqref{eq:ambient-lattice-sign} yields $\Theta_{G,t}(H)/\mathscr S_G(H)$, including the signs created by a mixed central quotient.
		
		This proves the real formula on the regular set. To pass across the singular set, let $c_t$ denote the prefactor in \eqref{eq:ambient-complex-image}. The function
		\[
		Z\longmapsto
		\mathscr S_G(Z)q_t^{\C}(\exp Z)-c_t\Theta_{G,t}(Z)
		\]
		is entire on $\tLie_{G,\C}$ and vanishes on the open regular subset of the maximal totally real space $\tLie_G$. Successive one-variable identity theorems, in real coordinates on $\tLie_G$, therefore show that it vanishes identically. On the regular set the quotient in \eqref{eq:ambient-W} equals $c_t^{-1}q_t^{\C}(\exp Z)$; the latter entire function defines its continuation across every affine root hyperplane. This continuation is invariant under both the Weyl group and translation by $\Lambda_G$. This proves \eqref{eq:ambient-complex-image} and all asserted continuation and invariance properties.
	\end{proof}
	
	\begin{corollary}\label{cor:heat-kernel-image-sc}
		If $G$ is simply connected, then $G$ is semisimple, $\Lambda_G=2\pi Q_G^\vee$, where $Q_G^\vee$ is the coroot lattice, and $\varepsilon_G\equiv1$. Consequently
		\begin{equation}\label{eq:ambient-complex-image-sc}
			q_t^{\C}(\exp Z)
			=\frac{\vol(G)e^{t|\rho_G|^2}}{(4\pi t)^{\dim G/2}}
			\frac{\displaystyle\sum_{\gamma\in\Lambda_G}
				\Pi_G(Z+\gamma)
				e^{-\langle Z+\gamma,Z+\gamma\rangle/4t}}
			{\mathscr S_G(Z)},
		\end{equation}
		where the quotient is understood through its entire continuation across the removable singularities.
	\end{corollary}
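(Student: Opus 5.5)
This corollary is a specialization of \Cref{thm:heat-kernel-image}, so the plan is to verify its three structural claims and then substitute them into \eqref{eq:ambient-complex-image}.

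\emph{Semisimplicity.} If $G$ is simply connected, I would show that the identity component of the center is trivial. The covering $Z_0\times G_{\mathrm{sc}}\to G$ from the proof of \Cref{thm:heat-kernel-image} is a finite covering of a simply connected space, hence an isomorphism. Since $\pi_1(Z_0)$ is $\Z^{\dim Z_0}$, simple connectivity of $G$ forces $\dim Z_0=0$. Therefore $\mathfrak z(\gLie)=0$ and $G$ is semisimple; moreover $G=G_{\mathrm{sc}}$ and $F$ is trivial.

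\emph{The lattice.} Next I would identify $\Lambda_G=\ker(\exp|_{\tLie_G})$ with $2\pi Q_G^\vee$. In the real-root convention, coroots are $H_\alpha=2\alpha^\sharp/|\alpha|^2$, where $\alpha^\sharp$ is the metric dual. The classical fact is that $\ker\exp|_{\tLie}/2\pi$ lies between the coroot lattice and the coweight lattice, and that $\pi_1(G)\cong(\ker\exp/2\pi)/Q^\vee$. This is the standard consequence of the affine Weyl group acting simply transitively on alcoves: every loop in $T$ that is contractible in $G$ comes from $2\pi Q^\vee$. Simple connectivity then gives equality. To justify the factor $2\pi$, I would check it on $\SU(2)$: the element $\exp(2\pi H_\alpha)$ is $\exp$ of $2\pi\,\diag(i,-i)$, which equals $e$, and this is compatible with $\alpha(\gamma)\in2\pi\Z$ in \eqref{eq:ambient-lattice-sign}. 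I expect this step to be the main obstacle, but only because of normalization conventions. I would cite it as a classical fact, for example Bröcker--tom Dieck, rather than reprove it.

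\emph{The sign.} For $\gamma=2\pi\sum n_\alpha H_\alpha$ we have
\[
\rho_G(\gamma)=2\pi\sum_\alpha n_\alpha\,\rho_G(H_\alpha).
\]
For each simple coroot, $\rho_G(H_{\alpha_i})=1$, since $\rho$ is the sum of the fundamental weights. Hence $\rho_G(\gamma)\in2\pi\Z$ and $\varepsilon_G(\gamma)=e^{i\rho_G(\gamma)}=1$. Because $\gamma$ lies in the $\Z$-span of the simple coroots, it suffices to check this on simple coroots.

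\emph{Conclusion.} Finally I would substitute $\varepsilon_G\equiv1$ and $\Lambda_G=2\pi Q_G^\vee$ into \eqref{eq:ambient-theta}, \eqref{eq:ambient-W} and \eqref{eq:ambient-complex-image}. This yields \eqref{eq:ambient-complex-image-sc}, and the removability of the singularities is inherited directly from \Cref{thm:heat-kernel-image}.
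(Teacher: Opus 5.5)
Your proposal is correct and follows essentially the same route as the paper. The paper takes semisimplicity and $\Lambda_G=2\pi Q_G^\vee$ as classical and checks only $\varepsilon_G\equiv1$, via the integrality of $\rho_G$ on the coroot lattice; this is exactly your observation $\rho_G(H_{\alpha_i})=1$ on simple coroots, followed by the same substitution into \Cref{thm:heat-kernel-image}. Your added arguments for semisimplicity (via the covering $Z_0\times G_{\mathrm{sc}}\to G$) and for the lattice identity (via $\pi_1(G)\cong\Lambda_G/2\pi Q_G^\vee$) fill in standard facts that the paper leaves implicit.
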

	Indeed, for $\gamma=2\pi\eta$ with $\eta\in Q_G^\vee$, the integrality of $\rho_G(\eta)$ makes the sign in \eqref{eq:ambient-lattice-sign} equal to one.
	
	The holomorphic continuation $q_t^{\C}$ is conjugation invariant. It is convenient to normalize the resulting class function by
	\begin{equation*}
		\mathbf W_{G,t}(x)
		:=\frac{(4\pi t)^{\dim G/2}}{\vol(G)e^{t|\rho_G|^2}}q_t^{\C}(x),
		\qquad x\in G_{\C}.
	\end{equation*}
	If a regular semisimple $x$ is conjugate to $\exp Z$, then $\mathbf W_{G,t}(x)=\mathscr W_{G,t}(Z)$; affine Weyl invariance makes this independent of the toral representative and logarithm branch. All other values are supplied by holomorphic continuation.
	
	Combining the image and integral formulas gives, whenever $\pLie$ is bracket generating,
	\begin{equation*}
		\begin{aligned}
			p_t(g)={}&
			\frac{\vol(G)e^{t(|\rho_G|^2-|\rho_L|^2)}}
			{(4\pi t)^{(\dim G+\dim L)/2}}
			\\
			&\times\int_{\lLie}
			\mathbf W_{G,t}(ge^{iY})j_L^{\nc}(Y)^{1/2}
			e^{-|Y|^2/4t}\,\mathrm dY.
		\end{aligned}
	\end{equation*}
	At every regular semisimple argument $ge^{iY}$, the factor $\mathbf W_{G,t}(ge^{iY})$ is the explicit signed theta quotient in \eqref{eq:ambient-theta} and \eqref{eq:ambient-W}, evaluated at any toral logarithm.
	
	The integral admits a further reduction to a Cartan subalgebra. For the maximal torus $T\subset L$ and positive roots fixed above, define
	\[
	\Pi_L(H):=\prod_{\alpha\in R_L^+}\alpha(H).
	\]
	Let $\Wt_L$ be the Weyl group. Haar measures on $L$ and $T$ have mass one, and $\mathrm d\dot k$ is the quotient probability measure determined by
	\[
	\int_L F(k)\,\mathrm{d}k
	=
	\int_{L/T}\int_T F(kt)\,\mathrm{d}t\,\mathrm{d}\dot k
	\]
	for continuous $F$. With the metric Euclidean measures, set
	\[
	C_L:=\frac{\vol(L)}{\vol(T)|\Wt_L|}.
	\]
	The Lie algebra Weyl integration formula is
	\begin{equation}\label{eq:weyl-lie}
		\int_{\lLie}F(Y)\,\mathrm{d}Y
		=
		C_L
		\int_{\tLie}\int_{L/T}F(\Ad(k)H)\,|\Pi_L(H)|^2
		\,\mathrm{d}\dot k\,\mathrm{d}H,
	\end{equation}
	for $F\in L^1(\lLie)$. Indeed, on the regular set the map $(kT,H)\mapsto\Ad(k)H$ from $L/T\times\tLie$ to $\lLie$ is $|\Wt_L|$-to-one and has normal Jacobian $|\Pi_L(H)|^2$. The quotient Riemannian volume is $(\vol(L)/\vol(T))\,\mathrm d\dot k$, so the area formula gives \eqref{eq:weyl-lie}; the singular sets have measure zero.
	
	For $H\in\tLie$, the root space determinant computation with the real compact root convention gives
	\[
	j_L^{\nc}(H)^{1/2}
	=
	\prod_{\alpha\in R_L^+}
	\frac{\sinh\!\bigl(\alpha(H)/2\bigr)}{\alpha(H)/2}.
	\]
	Indeed, on the real two-plane attached to $\alpha$ the eigenvalues of $\operatorname{ad}H$ are $\pm i\alpha(H)$, and replacing $H$ by $iH$ changes the usual sine factor into the displayed hyperbolic sine factor. The quotient is interpreted by its removable value $1$ when $\alpha(H)=0$.
	
	For $g\in G$, $t>0$, and $H\in\tLie$, define the orbital average
	\[
	\mathcal O_{g,t}(H)
	:=
	\int_{L/T} q_t^{\C}(gk e^{iH}k^{-1})\,\mathrm{d}\dot k.
	\]
	This is well defined because $e^{iH}$ is fixed by conjugation with $T$.
	
	\begin{theorem}\label{thm:cartan-general}
		Assume that $\pLie$ is bracket generating. Then, for every $g\in G$ and $t>0$,
		\[
		p_t(g)
		=
		C_L e^{-|\rho_L|^2t}
		\int_{\tLie}
		\mathcal O_{g,t}(H)\,
		j_L^{\nc}(H)^{1/2}|\Pi_L(H)|^2
		\frac{e^{-|H|^2/4t}}{(4\pi t)^{\dim L/2}}
		\,\mathrm{d}H.
		\]
	\end{theorem}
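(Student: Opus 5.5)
The plan is to start from \Cref{thm:integral-formula} and apply the Lie algebra Weyl integration formula \eqref{eq:weyl-lie} to the integrand
\[
F(Y):=q_t^{\C}(g e^{iY})\,j_L^{\nc}(Y)^{1/2}\frac{e^{-|Y|^2/4t}}{(4\pi t)^{\dim L/2}},\qquad Y\in\lLie .
\]
\Cref{thm:integral-formula} already gives $F\in L^1(\lLie)$, so \eqref{eq:weyl-lie} applies to $F$ itself. It produces $C_L\int_{\tLie}\int_{L/T}F(\Ad(k)H)|\Pi_L(H)|^2\,\mathrm d\dot k\,\mathrm dH$. The remaining work is to compute $F(\Ad(k)H)$ and then use Fubini to perform the $L/T$ integral first.

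The factors transform as follows.
\begin{itemize}
\item The Gaussian factor is $\Ad(L)$-invariant because the inner product is invariant, so $|\Ad(k)H|=|H|$.
\item The Jacobian factor $j_L^{\nc}(Y)=\det\bigl((1-e^{-i\operatorname{ad}Y})/(i\operatorname{ad}Y)\bigr)$ is a determinant of an analytic function of $\operatorname{ad}Y$. Since $\operatorname{ad}(\Ad(k)H)=\Ad(k)\operatorname{ad}(H)\Ad(k)^{-1}$, it is $\Ad(L)$-invariant. The positive square root, normalized at the origin, is therefore also invariant, giving $j_L^{\nc}(\Ad(k)H)^{1/2}=j_L^{\nc}(H)^{1/2}$.
\item For the holomorphic kernel, note that $e^{i\Ad(k)H}=ke^{iH}k^{-1}$ in $L_\C\subset G_\C$, which is just the matrix identity $\exp(\Ad(k)X)=k\exp(X)k^{-1}$. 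Hence $q_t^{\C}(g e^{i\Ad(k)H})=q_t^{\C}(gke^{iH}k^{-1})$.
\end{itemize}
This last expression depends only on $kT$, because $T$ commutes with $e^{iH}$. Its integral over $L/T$ is exactly $\mathcal O_{g,t}(H)$.

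Substituting these into the Weyl formula and pulling the $k$-independent factors out of the inner integral gives the asserted expression for $p_t(g)$.

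The only delicate point is justifying Fubini and the measurability and convergence of $\mathcal O_{g,t}$. I would handle it by applying \eqref{eq:weyl-lie} to $|F|$, which is valid for nonnegative measurable functions by the same area formula argument. This shows that the iterated integral of $|F(\Ad(k)H)|\,|\Pi_L(H)|^2$ is finite. Tonelli and Fubini then let us integrate over $L/T$ first, and show that $\mathcal O_{g,t}(H)$ is finite for almost every $H$, in fact for all $H$ by continuity on the compact set $L/T$. The resulting $\tLie$-integral converges absolutely.

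I expect no real obstacle beyond this bookkeeping. Continuity of $F$ ensures the singular (nonregular) set, which has measure zero, does not matter.
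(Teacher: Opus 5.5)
Your proposal is correct and follows the paper's own proof essentially step for step. You apply the Lie algebra Weyl integration formula \eqref{eq:weyl-lie} to the $L^1$ integrand supplied by \Cref{thm:integral-formula}. You use the $\Ad(L)$-invariance of the Gaussian and of $j_L^{\nc}$, together with $e^{i\Ad(k)H}=ke^{iH}k^{-1}$, to collapse the $L/T$ integral into $\mathcal O_{g,t}(H)$. Finally, you obtain absolute integrability on $\tLie$ by repeating the same computation with $|F|$.
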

	
	\begin{proof}
		By \Cref{thm:integral-formula},
		\[
		p_t(g)=e^{-|\rho_L|^2t}\int_{\lLie}\Phi_{g,t}(Y)\,\mathrm{d}Y,
		\]
		where
		\[
		\Phi_{g,t}(Y)
		:=
		q_t^{\C}(g e^{iY})\,j_L^{\nc}(Y)^{1/2}
		\frac{e^{-|Y|^2/4t}}{(4\pi t)^{\dim L/2}}.
		\]
		The absolute convergence in \Cref{thm:integral-formula} gives $\Phi_{g,t}\in L^1(\lLie)$, so \eqref{eq:weyl-lie} applies.
		
		Since $j_L^{\nc}$ and $|Y|$ are $\Ad(L)$-invariant, for $H\in\tLie$ we have
		\[
		\Phi_{g,t}(\Ad(k)H)
		=
		q_t^{\C}(g e^{i\Ad(k)H})\,j_L^{\nc}(H)^{1/2}
		\frac{e^{-|H|^2/4t}}{(4\pi t)^{\dim L/2}}.
		\]
		Using
		\[
		e^{i\Ad(k)H}=k e^{iH}k^{-1},
		\]
		we obtain
		\[
		\int_{L/T}\Phi_{g,t}(\Ad(k)H)\,\mathrm{d}\dot k
		=
		\mathcal O_{g,t}(H)\,
		j_L^{\nc}(H)^{1/2}
		\frac{e^{-|H|^2/4t}}{(4\pi t)^{\dim L/2}}.
		\]
		Substituting this identity into the Lie algebra Weyl integration formula \eqref{eq:weyl-lie} proves the formula. Applying the same computation to $|\Phi_{g,t}|$ and using
		\[
		|\mathcal O_{g,t}(H)|
		\le
		\int_{L/T}|q_t^{\C}(gk e^{iH}k^{-1})|\,\mathrm{d}\dot k
		\]
		shows that the Cartan integrand is absolutely integrable on $\tLie$.
	\end{proof}
	
	At $g=e$, conjugation invariance gives $\mathcal O_{e,t}(H)=q_t^{\C}(e^{iH})$. On the dominant Weyl chamber $\tLie_+$, all factors in $\Pi_L(H)$ are nonnegative; hence \Cref{thm:heat-kernel-image,thm:cartan-general} give the diagonal formula
	\begin{equation*}
		\begin{aligned}
			p_t(e)={}&
			\frac{C_L|\Wt_L|\vol(G)e^{t(|\rho_G|^2-|\rho_L|^2)}}
			{(4\pi t)^{(\dim G+\dim L)/2}}
			\\
			&\times\int_{\tLie_+}
			\mathscr W_{G,t}(iH)\Pi_L(H)^2j_L^{\nc}(H)^{1/2}e^{-|H|^2/4t}
			\,\mathrm dH.
		\end{aligned}
	\end{equation*}
	
	\section{Two-step compact Lie pairs and the sub-Riemannian distance}
	\label{sec:distance}
	
	Throughout this section, $L$ remains an arbitrary closed connected subgroup of $K$ and $(G,L)$ is assumed to be two-step. The left-invariant horizontal distribution is $\mathcal H_g=(L_g)_*\pLie$, and its Carnot-Carath\'eodory distance is the distance $d_{\sr}$ defined in \eqref{eq:intro-sr-distance}.
	
	\begin{lemma}\label{lem:normal-geodesics}
		Every normal geodesic issuing from $e$ has the form
		\begin{equation}\label{eq:normal-geodesic}
			\gamma_{A,B}(t)=\exp\!\bigl(t(A+B)\bigr)\exp(-tA),
			\qquad A\in\lLie,\quad B\in\pLie.
		\end{equation}
		Its left-trivialized velocity is $\Ad_{e^{tA}}B$ and its speed is $|B|$. If $\gLie=\pLie+[\pLie,\pLie]$, every length minimizer has a normal lift.
	\end{lemma}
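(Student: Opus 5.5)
The plan is to apply the Pontryagin maximum principle in its left-trivialized form on $T^*G\cong G\times\gLie^*\cong G\times\gLie$, using the fixed $\Ad$-invariant inner product to identify $\gLie^*$ with $\gLie$. Write the control system as $\gamma^{-1}\dot\gamma=u(t)\in\pLie$ with energy $\frac12\int|u|^2$. The normal Hamiltonian is $h(\gamma,\xi)=\frac12|\pr_{\pLie}\xi|^2$. Here $\xi\in\gLie$ is the left-trivialized covector, which satisfies the Euler--Poincar\'e (Lie--Poisson) equation
\[
\dot\xi=[\xi,u],
\qquad u=\pr_{\pLie}\xi .
\]
The sign depends on conventions and will be fixed by the check at the end. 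Decompose $\xi=A'+B'$ with $A'=\pr_{\lLie}\xi$ and $B'=\pr_{\pLie}\xi$. Since $u=B'$, we get $[\xi,u]=[A'+B',B']=[A',B']$.

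\textbf{Step 1 (conserved $\lLie$-part).} I use $\Ad(L)$-invariance of the splitting: $[\lLie,\pLie]\subset\pLie$, because $\pLie=\lLie^\perp$ is $\Ad(L)$-invariant. Hence $[A',B']\in\pLie$, so $\dot A'=0$ and $\dot B'=[A',B']$. Setting $A:=-A'$ (sign to be matched), this gives $B'(t)=\Ad_{e^{tA}}B$ with $B=B'(0)$. This is the only structural input, and it holds for any closed subgroup $L$.

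\textbf{Step 2 (integration).} I verify directly that $\gamma_{A,B}(t)=e^{t(A+B)}e^{-tA}$ has left-trivialized velocity
\[
\gamma^{-1}\dot\gamma=e^{tA}(A+B)e^{-tA}-A=\Ad_{e^{tA}}(A+B)-A=\Ad_{e^{tA}}B,
\]
using $\Ad_{e^{tA}}A=A$. By uniqueness for the ODE $\dot\gamma=\gamma u$, every normal extremal from $e$ has this form. The velocity lies in $\pLie$ by Step 1, and its speed is $|\Ad_{e^{tA}}B|=|B|$ by invariance of the metric. Conversely, every such curve is normal: take $\xi(t)=-A+\Ad_{e^{tA}}B$ and check the equation. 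I will fix the sign of $A$ at this point.

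\textbf{Step 3 (no strictly abnormal minimizers).} This is the main obstacle. An abnormal extremal has a nonzero covector $\xi(t)$ with $\pr_{\pLie}\xi\equiv0$ along the curve, evolving by $\dot\xi=[\xi,u]$ (the coadjoint transport). So $\xi(t)\in\lLie$ for all $t$. Differentiating the constraint gives $\pr_{\pLie}[\xi,u]=0$, which holds automatically because $[\lLie,\pLie]\subset\pLie$ yields $[\xi,u]\in\pLie$, and the equation then forces $\dot\xi=[\xi,u]$ to vanish in $\lLie$. The cleaner route is the standard Goh/first-order condition: the annihilator condition $\langle\xi(t),\Ad_{\gamma(t)}$-transported $\pLie\rangle=0$ differentiated once yields $\langle\xi,[\pLie,\pLie]\rangle=0$. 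With $\pLie\perp\xi$, this gives $\xi\perp\pLie+[\pLie,\pLie]=\gLie$, so $\xi=0$, a contradiction. Thus under the two-step hypothesis there are no nontrivial abnormal covectors at all; the structure is fat in the sense of being step two.

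Hence, by the maximum principle, every length minimizer, parametrized at constant speed, is the projection of a normal extremal, which by Steps 1 and 2 is some $\gamma_{A,B}$. The care needed here is in carrying out the Goh-type differentiation correctly for measurable controls. I would do this using absolute continuity of $\xi$ and differentiating $\langle\xi(t),X\rangle=0$ for fixed $X\in\pLie$, which gives $\langle\xi,[u(t),X]\rangle=0$ a.e. Varying the admissible controls is not available for a single curve, so I would instead invoke the standard characterization that abnormal minimizers of step-two distributions must satisfy the Goh condition $\langle\xi,[\pLie,\pLie]\rangle=0$, e.g.\ from \cite{AgrachevBarilariBoscain}. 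That condition, combined with $\xi\in\lLie$ and metric invariance, kills $\xi$.
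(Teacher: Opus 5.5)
Your route is the paper's: left-trivialized maximum principle, conservation of the $\lLie$-component of the costate because $[\lLie,\pLie]\subset\pLie$, explicit integration to $\gamma_{A,B}$, and the Goh condition to exclude strictly abnormal minimizers (the paper cites \cite[Corollary~12.15]{AgrachevBarilariBoscain}).

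The sign hedge in Steps~1--2 is unnecessary, and as written it points the wrong way. Under your own convention $\dot\xi=[\xi,u]$ you get $\dot B'=[A',B']$, hence $B'(t)=\Ad_{e^{tA'}}B$. So $A=A'$, and the costate of $\gamma_{A,B}$ is $A+\Ad_{e^{tA}}B$, matching the paper's ``costate $A+B$''. Your candidate $-A+\Ad_{e^{tA}}B$ does not satisfy that equation. This does not affect the statement, since $A$ ranges over all of $\lLie$.

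Step~3, however, contains false assertions that should be removed, not just tidied.

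\begin{itemize}
\item Differentiating $\pr_{\pLie}\xi\equiv0$ is not automatic. Since $[\xi,u]\in\pLie$, it says $[\xi,u(t)]=0$ a.e. Together with $\pr_{\lLie}\dot\xi=0$, this shows that the abnormal extremals issuing from $e$ are exactly $\xi\equiv A\in\lLie\setminus\{0\}$ with $u(t)\in\ker(\operatorname{ad}A|_{\pLie})$ a.e.
\item Such extremals exist for two-step pairs. For $(\SU(3),S(\U(2)\times\U(1)))$, the element $A=i\,\diag(1,-2,1)\in\kLie$ commutes with $E_{13}-E_{31}\in\pLie$. So step two is not fatness, and ``no nontrivial abnormal covectors at all'' is false.
\item One differentiation gives only $\langle\xi,[u(t),\pLie]\rangle=0$, never $\langle\xi,[\pLie,\pLie]\rangle=0$.
\item Short arcs of $t\mapsto\exp(t(E_{13}-E_{31}))$ are minimizers carrying an abnormal lift that violates the Goh identity. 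So Goh cannot be applied to every abnormal lift of every minimizer.
\end{itemize}

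The correct input, which your last paragraph reaches for, is the second-order statement for strictly abnormal minimizers. A minimizer with no normal lift admits some abnormal lift satisfying $\langle\xi,[\pLie,\pLie]\rangle=0$. Combined with $\xi\perp\pLie$ and $\gLie=\pLie+[\pLie,\pLie]$, this forces $\xi=0$, a contradiction. Hence every minimizer has a normal lift, possibly in addition to abnormal ones. Phrased this way, your argument is exactly the paper's.
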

	
	\begin{proof}
		Identify $\gLie^*$ with $\gLie$ by the invariant inner product and write a costate as $A+B$ with $A\in\lLie$ and $B\in\pLie$. The normal Hamiltonian is $|B|^2/2$. Its Hamiltonian flow keeps $A$ constant and gives the control $u(t)=\Ad_{e^{tA}}B$. Integrating $\gamma^{-1}\dot\gamma=u$ yields \eqref{eq:normal-geodesic}; see \cite[Equation~(7.46)]{AgrachevBarilariBoscain}. For a two-step distribution, the Goh necessary condition for a strictly abnormal minimizer forces its covector to annihilate $\pLie+[\pLie,\pLie]=\gLie$. The covector is then zero, so every minimizer is normal; equivalently, apply \cite[Corollary~12.15]{AgrachevBarilariBoscain}.
	\end{proof}
	
	To distinguish the two geometries, we retain $d_{\sr}$ for the Carnot-Carath\'eodory distance on $G$ and reserve $d_0$ for the Carnot-Carath\'eodory distance on the nilpotent tangent group. For the two-step pair considered here, the nilpotentization at the identity is the simply connected group $\mathbb N_{G,L}$ on $\pLie\oplus\lLie$ with product
	\begin{equation}\label{eq:tangent-group-product}
		(X,Z)(X',Z')
		=\left(X+X',Z+Z'+\frac12\pr_{\lLie}[X,X']\right).
	\end{equation}
	Its first layer is $\pLie$, its central layer is $\lLie$, and its homogeneous dimension is $Q$. We write $d_0$ for its Carnot-Carath\'eodory distance and $p_t^0$ for the heat kernel of $\exp(t\sum_{i=1}^m\widetilde X_i^{\,2})$ with respect to the Lebesgue measure in exponential coordinates induced by the metric. In the symmetric specialization $L=K$, one has $\lLie=\kLie$ and $[\pLie,\pLie]\subset\kLie$, so the projection in \eqref{eq:tangent-group-product} may be omitted.
	
	\begin{proposition}\label{prop:distance-general}
		Assume $(G,L)$ is two-step. Then, for every $g\in G$,
		\[
		d_{\sr}(e,g)^2
		=
		\min\Bigl\{|B|^2:\ g=\exp(A+B)\exp(-A),\ A\in\lLie,\ B\in\pLie\Bigr\}.
		\]
	\end{proposition}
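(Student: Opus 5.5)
The plan is to prove the two inequalities separately, using \Cref{lem:normal-geodesics} for the hard direction. Throughout, write $\gamma_{A,B}$ for the normal geodesic in \eqref{eq:normal-geodesic}.

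\emph{Upper bound.} Suppose $g=\exp(A+B)\exp(-A)$ with $A\in\lLie$ and $B\in\pLie$. Then the curve $\gamma_{A,B}$ on $[0,1]$ joins $e$ to $g$. By \Cref{lem:normal-geodesics}, its left-trivialized velocity is $\Ad_{e^{tA}}B$. This vector lies in $\pLie$, because $\Ad(L)$ preserves $\lLie$ and, since the inner product is invariant, also preserves $\pLie=\lLie^\perp$. The speed of the curve is therefore constantly $|B|$, so it is horizontal with length $|B|$, and hence $d_{\sr}(e,g)\le|B|$. Taking the infimum over admissible pairs gives $d_{\sr}(e,g)^2\le\inf|B|^2$.

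\emph{Lower bound and attainment.} The pair is two-step, so $\pLie$ is bracket generating. Chow--Rashevskii then gives a finite distance and a topology equal to the manifold topology. Since $G$ is compact, closed $d_{\sr}$-balls are compact, and Filippov/Arzel\`a--Ascoli in the standard form (e.g.\ \cite{AgrachevBarilariBoscain}) yields a length minimizer $\gamma$ from $e$ to $g$, parametrized on $[0,1]$ with constant speed $d_{\sr}(e,g)$. By \Cref{lem:normal-geodesics}, every minimizer of a two-step distribution has a normal lift. Hence $\gamma$ is a normal geodesic starting at $e$, and the lemma gives $\gamma=\gamma_{A,B}$ for some $A\in\lLie$ and $B\in\pLie$. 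This uses the constant-speed parametrization on $[0,1]$, which matches the rescaling $t\mapsto$ costate scaling of normal geodesics. The speed is then $|B|=d_{\sr}(e,g)$, and $g=\gamma(1)=\exp(A+B)\exp(-A)$. So the infimum is attained and equals $d_{\sr}(e,g)^2$, and writing $\min$ is justified.

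The step I expect to need the most care is matching the parametrization. A normal geodesic with costate $A+B$ run on $[0,1]$ has length $|B|$, but a minimizer might a priori be obtained on a different interval or with a different speed. I would handle this by noting that $\gamma_{sA,sB}(t)=\gamma_{A,B}(st)$, which follows directly from \eqref{eq:normal-geodesic}. Any normal minimizer on $[0,T]$ can thus be reparametrized to $[0,1]$ by scaling the costate, and only the endpoint and the length matter. Existence of minimizers is standard and I would cite it rather than reprove it.
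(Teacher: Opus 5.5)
Your proposal is correct and follows the paper's proof essentially step for step. The paper obtains a minimizer from compactness and bracket generation and gets a normal lift from \Cref{lem:normal-geodesics}; constant-speed parametrization on $[0,1]$ then gives $g=\gamma_{A,B}(1)$ with $|B|=d_{\sr}(e,g)$, and the converse uses the horizontal curve $\gamma_{A,B}$ of length $|B|$. Your explicit checks that $\Ad_{e^{tA}}B\in\pLie$ (by $\Ad(L)$-invariance of $\pLie=\lLie^\perp$) and that $\gamma_{sA,sB}(t)=\gamma_{A,B}(st)$ just make precise two points the paper leaves implicit.
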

	
	\begin{proof}
		The two-step condition implies that the horizontal distribution is bracket generating. Since $G$ is compact, its Carnot-Carath\'eodory metric is complete and proper, and therefore there is a length-minimizing horizontal curve $\gamma$ from $e$ to $g$. By \Cref{lem:normal-geodesics}, every minimizer has a normal lift. After constant-speed parametrization, \eqref{eq:normal-geodesic} gives
		\[
		\gamma(t)=\exp\!\bigl(t(A+B)\bigr)\exp(-tA)
		\]
		with $\gamma(1)=g$ and $|B|=d_{\sr}(e,g)$. This proves attainment and one inequality. Conversely, every pair satisfying $g=\exp(A+B)\exp(-A)$ supplies the horizontal curve $\gamma_{A,B}$ of length $|B|$, proving the reverse inequality. For arbitrary initial and terminal points $g_1,g_2\in G$, left invariance replaces $g$ by $g_1^{-1}g_2$.
	\end{proof}
	
	When $\pLie$ is bracket generating, $G$ is complete and equiregular. Moreover, the left-invariant horizontal fields have zero divergence with respect to Haar measure because $G$ is unimodular. Thus the sum of squares is the intrinsic hypoelliptic Laplacian associated with this sub-Riemannian structure, in the sense of~\cite{AgrachevBoscainGauthierRossi2009}, and L\'eandre's logarithmic formula~\cite{BarilariBoscainNeel2012} gives, for the present generator,
	\begin{equation}\label{eq:leandre-logarithmic}
		\lim_{t\downarrow0}4t\log p_t(g)=-d_{\sr}(e,g)^2.
	\end{equation}
	
	\section{Vertical asymptotics for compact symmetric pairs}
	\label{sec:sp-vertical}
	
	All notation remains as in the Introduction; in this section, we set $L=K$. Thus $K=(G^\theta)_0$ for the fixed involution $\theta$, and the standing $\Ad(G)$-invariant inner product is also $\theta$-invariant. The differential of $\theta$ gives the orthogonal decomposition
	\[
	\gLie=\kLie\oplus\pLie
	\]
	into the $+1$ and $-1$ eigenspaces. Hence
	\begin{equation}\label{eq:sp-brackets}
		[\kLie,\kLie]\subset\kLie,
		\qquad
		[\kLie,\pLie]\subset\pLie,
		\qquad
		[\pLie,\pLie]\subset\kLie.
	\end{equation}
	Assume
	\begin{equation}\label{eq:sp-bracket-surjective}
		[\pLie,\pLie]=\kLie.
	\end{equation}
	This is precisely the bracket-generating condition for the symmetric decomposition.
	
	The proof of \Cref{thm:intro-vertical} begins with the finite singular-value formula \eqref{eq:sp-F} and its finite-mesh bounds \eqref{eq:sp-grid-enclosure}. We then identify its objective with the vertical distance on the free two-step group in \Cref{thm:sp-free-distance}, descend that identity through the central quotient in \Cref{prop:sp-central-quotient}, and obtain the tangent distance in \Cref{thm:sp-tangent-distance}. Finally, \Cref{thm:sp-compact-vertical} compares the endpoint of the same scaled horizontal loop in the tangent group with the compact endpoint $\exp(\varepsilon Z)$ in $G$ and yields the vertical asymptotic stated in the Introduction.
	
	First consider the finite vertical coefficient. Equip $\Lambda^2\pLie$ with the Euclidean structure induced by the fixed inner product on $\pLie$. All singular values below are computed with respect to this metric. For $\Omega\in\Lambda^2\pLie$, define $J_\Omega\in\mathfrak{so}(\pLie)$ by
	\begin{equation}\label{eq:sp-J}
		\langle J_\Omega X,Y\rangle
		=\langle\Omega,X\wedge Y\rangle,
		\qquad X,Y\in\pLie.
	\end{equation}
	Write $\kappa:=\lfloor m/2\rfloor$ and let
	\[
	\sigma_1(\Omega)\ge\cdots\ge\sigma_\kappa(\Omega)\ge0
	\]
	be the nonnegative normal form coefficients of the skew-symmetric operator $J_\Omega$. Each occurs twice among the singular values of $J_\Omega$, with one additional zero when $m$ is odd. Thus there are orthonormal vectors $u_1,v_1,\ldots,u_\kappa,v_\kappa$ such that
	\begin{equation}\label{eq:sp-skew-normal}
		\Omega=\sum_{j=1}^{\kappa}\sigma_j(\Omega)u_j\wedge v_j.
	\end{equation}
	Set
	\begin{equation*}
		\Psi_{\pLie}(\Omega)
		:=4\pi\sum_{j=1}^{\kappa}j\,\sigma_j(\Omega).
	\end{equation*}
	
	The bracket defines the surjective linear map
	\begin{equation}\label{eq:sp-B}
		B:\Lambda^2\pLie\longrightarrow\kLie,
		\qquad
		B(X\wedge Y):=[X,Y].
	\end{equation}
	If $B^*:\kLie\to\Lambda^2\pLie$ is its metric adjoint, invariance of the inner product gives the identity
	\begin{equation}\label{eq:sp-isotropy-bracket-bridge}
		J_{B^*Y}=\operatorname{ad}(Y)|_{\pLie},
		\qquad Y\in\kLie.
	\end{equation}
	For $Z\in\kLie$, define
	\begin{equation}\label{eq:sp-F}
		\mathfrak F_{G,K}(Z)
		:=
		\min\left\{
		\Psi_{\pLie}(\Omega):
		\Omega\in\Lambda^2\pLie,\ B\Omega=Z
		\right\}.
	\end{equation}
	The minimum is attained because
	\begin{equation}\label{eq:sp-coercive}
		\Psi_{\pLie}(\Omega)
		\ge4\pi
		\left(\sum_{j=1}^{\kappa}\sigma_j(\Omega)^2\right)^{1/2}
		=4\pi|\Omega|
	\end{equation}
	and $B^{-1}(Z)$ is closed. Equivalently,
	\begin{equation*}
		\mathfrak F_{G,K}(Z)
		=4\pi\min\left\{
		\sum_{j=1}^{\kappa}j\sigma_j:
		\begin{array}{l}
			\sigma_1\ge\cdots\ge\sigma_\kappa\ge0,\\
			u_1,v_1,\ldots,u_\kappa,v_\kappa\text{ are orthonormal},\\
			Z=\displaystyle\sum_{j=1}^{\kappa}\sigma_j[u_j,v_j]
		\end{array}
		\right\}.
	\end{equation*}
	
	\begin{proposition}\label{prop:sp-regularity}
		The function $\mathfrak F_{G,K}$ is positively homogeneous of degree one, globally Lipschitz, and equivalent to the Euclidean norm on $\kLie$. Consequently there are constants $c,C,C_{\mathrm{Lip}}>0$, depending only on the pair, such that
		\begin{equation*}
			c|Z|\le\mathfrak F_{G,K}(Z)\le C|Z|,
			\qquad
			|\mathfrak F_{G,K}(Z)-\mathfrak F_{G,K}(Z')|
			\le C_{\mathrm{Lip}}|Z-Z'|.
		\end{equation*}
	\end{proposition}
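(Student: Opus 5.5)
Since $B^{-1}(Z)$ is an affine subspace and $\Psi_{\pLie}$ is a norm-like function, the plan is to treat $\mathfrak F_{G,K}$ as the quotient (infimal projection) of $\Psi_{\pLie}$ under the surjection $B$. The first step is to show that $\Psi_{\pLie}$ is a norm on $\Lambda^2\pLie$. Positive homogeneity is clear, and \eqref{eq:sp-coercive} gives $\Psi_{\pLie}(\Omega)\ge4\pi|\Omega|$, so it remains to prove subadditivity. For this I would use the identity
\[
\sum_{j=1}^{\kappa}j\,\sigma_j=\sum_{k=1}^{\kappa}\Bigl(\sum_{j=1}^{\kappa}\sigma_j-\sum_{j=1}^{k-1}\sigma_j\Bigr).
\]
By itself this does not exhibit subadditivity, so I would instead write $j=\kappa+1-(\kappa+1-j)$ and look for a formula $\sum_j j\sigma_j=\sum_{k}c_k\,\|\cdot\|_{(k)}$ with nonnegative $c_k$, where $\|\cdot\|_{(k)}$ denotes the Ky Fan norms. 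Since the weights $j$ are \emph{increasing}, such an expansion has negative coefficients, and $\Psi_{\pLie}$ need not be convex.

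I therefore avoid convexity altogether and use only two properties: homogeneity, and continuity of $\sigma_j$ under perturbation, via Weyl's inequality $|\sigma_j(\Omega)-\sigma_j(\Omega')|\le\|J_\Omega-J_{\Omega'}\|_{op}\le C|\Omega-\Omega'|$. This gives
\[
|\Psi_{\pLie}(\Omega)-\Psi_{\pLie}(\Omega')|\le C_1|\Omega-\Omega'|,
\]
with $C_1=4\pi\sum_j j\cdot C$.

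\textbf{Homogeneity.} For $\lambda>0$ the map $\Omega\mapsto\lambda\Omega$ is a bijection $B^{-1}(Z)\to B^{-1}(\lambda Z)$, and $\sigma_j(\lambda\Omega)=\lambda\sigma_j(\Omega)$. Hence $\mathfrak F_{G,K}(\lambda Z)=\lambda\mathfrak F_{G,K}(Z)$. The case $\lambda=0$ follows from $\mathfrak F_{G,K}(0)=0$, attained at $\Omega=0$.

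\textbf{Lipschitz bound.} Let $B^+=B^*(BB^*)^{-1}$ be the right inverse of $B$; it exists because $B$ is surjective by \eqref{eq:sp-bracket-surjective}. Given $Z,Z'$, take a minimizer $\Omega$ for $Z$ and set $\Omega'=\Omega+B^+(Z'-Z)\in B^{-1}(Z')$. Then
\[
\mathfrak F_{G,K}(Z')\le\Psi_{\pLie}(\Omega')\le\mathfrak F_{G,K}(Z)+C_1\|B^+\|\,|Z'-Z|.
\]
Exchanging the roles of $Z$ and $Z'$ gives the Lipschitz estimate with $C_{\mathrm{Lip}}=C_1\|B^+\|$.

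\textbf{Norm equivalence.} For the upper bound, take $Z'=0$ in the Lipschitz estimate, or directly use $\Omega=B^+Z$, to get $\mathfrak F_{G,K}(Z)\le C_1\|B^+\||Z|$. For the lower bound, any $\Omega$ with $B\Omega=Z$ satisfies $|Z|\le\|B\||\Omega|$, so \eqref{eq:sp-coercive} gives
\[
\mathfrak F_{G,K}(Z)\ge\frac{4\pi}{\|B\|}\,|Z|.
\]

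The only slightly delicate point is the perturbation bound for the individual normal-form coefficients $\sigma_j$, as opposed to a bound on their sum. Since the singular values of $J_\Omega$ are the $\sigma_j$, each repeated twice, Weyl's singular value inequality for the linear map $\Omega\mapsto J_\Omega$ handles this. That map is an isometry up to a constant factor, since $\|J_\Omega\|_{HS}^2=2|\Omega|^2$ under the induced metric.
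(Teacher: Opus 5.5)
Your proposal is correct and follows the paper's proof: Lipschitz continuity of $\Psi_{\pLie}$, a feasibility shift by a linear right inverse of $B$ for the Lipschitz and upper bounds, and the coercive bound \eqref{eq:sp-coercive} for the lower bound. The only difference is that you control each $\sigma_j$ separately with Weyl's operator-norm inequality, while the paper uses Hoffman--Wielandt, which gives the slightly better constant $4\pi(\sum_j j^2)^{1/2}$ instead of $4\pi\sum_j j$. You were also right to drop your opening attempt to show that $\Psi_{\pLie}$ is a norm: for $m\ge4$ and orthonormal $e_i$, $\Psi_{\pLie}(e_1\wedge e_2+e_3\wedge e_4)=12\pi>8\pi=\Psi_{\pLie}(e_1\wedge e_2)+\Psi_{\pLie}(e_3\wedge e_4)$, so it is not subadditive.
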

	
	\begin{proof}
		The Hoffman-Wielandt inequality states that, for normal matrices $A,C$,
		\[
		\min_{\pi\in\mathfrak S_m}
		\sum_{j=1}^{m}|\lambda_j(A)-\lambda_{\pi(j)}(C)|^2
		\le \|A-C\|_{\mathrm{HS}}^2,
		\]
		where $\|A\|_{\mathrm{HS}}^2=\Tr(A^*A)$; see \cite[Theorem~1]{HoffmanWielandt}. For a real skew-symmetric operator, the eigenvalues are the pairs $\pm i\sigma_j$, together with a possible zero. Ordering them by their imaginary parts pairs $i\sigma_j(\Omega)$ with $i\sigma_j(\Omega')$ and the corresponding negative eigenvalues. Hence
		\[
		2\sum_j(\sigma_j(\Omega)-\sigma_j(\Omega'))^2
		\le\|J_\Omega-J_{\Omega'}\|_{\mathrm{HS}}^2.
		\]
		Since $\|J_\Omega-J_{\Omega'}\|_{\mathrm{HS}}^2=2|\Omega-\Omega'|^2$, it follows that
		\begin{equation}\label{eq:sp-Psi-Lipschitz}
			|\Psi_{\pLie}(\Omega)-\Psi_{\pLie}(\Omega')|
			\le4\pi\left(\sum_{j=1}^{\kappa}j^2\right)^{1/2}
			|\Omega-\Omega'|.
		\end{equation}
		Choose a linear right inverse $R:\kLie\to\Lambda^2\pLie$ of $B$. If $\Omega_Z$ minimizes \eqref{eq:sp-F}, then $\Omega_Z+R(Z'-Z)$ is feasible at $Z'$, because
		\[
		B\bigl(\Omega_Z+R(Z'-Z)\bigr)=Z'.
		\]
		Comparison in both directions gives the Lipschitz estimate. Homogeneity is immediate, and $RZ$ gives the upper norm bound. Conversely, $|Z|\le\|B\||\Omega|$ together with \eqref{eq:sp-coercive} gives the lower bound after taking the infimum.
	\end{proof}
	
	For computation, we first express the affine constraint $B\Omega=Z$ in orthonormal coordinates. Eliminating this constraint reduces the coefficient to a finite-dimensional unconstrained minimization; a mesh of the bounded search cube then gives lower and upper bounds.
	
	Choose orthonormal bases
	\[
	X_1,\ldots,X_m\quad\text{of }\pLie,
	\qquad
	Y_1,\ldots,Y_r\quad\text{of }\kLie,
	\]
	and order the pairs $(i,j)$, $1\le i<j\le m$, lexicographically. For $\omega\in\mathbb R^{\binom m2}$, write
	\[
	\Omega(\omega)
	:=\sum_{i<j}\omega_{ij}X_i\wedge X_j.
	\]
	The bracket map and target have the concrete coordinates
	\begin{equation*}
		\mathbf B_{a,(i,j)}
		:=\langle Y_a,[X_i,X_j]\rangle,
		\qquad
		z_a:=\langle Y_a,Z\rangle,
		\qquad Z\in\kLie.
	\end{equation*}
	Thus
	\begin{equation*}
		\mathfrak F_{G,K}(Z)
		=
		\min_{\mathbf B\omega=z}
		4\pi\sum_{j=1}^{\kappa}j\,\sigma_j(\Omega(\omega)),
	\end{equation*}
	where $\mathbf B\in\mathbb R^{r\times\binom m2}$ has full row rank.
	
	Let $\omega_0$ be the minimum-norm solution of $\mathbf B\omega=z$, and let the columns of $N\in\mathbb R^{\binom m2\times(\binom m2-r)}$ be an orthonormal basis of $\ker\mathbf B$. Every feasible point is uniquely $\omega=\omega_0+Ny$, and hence
	\begin{equation}\label{eq:sp-finite-objective}
		\mathfrak F_{G,K}(Z)
		=\min_{y\in\mathbb R^{\binom m2-r}}
		\Psi_{\pLie}\bigl(\Omega(\omega_0+Ny)\bigr).
	\end{equation}
	Orthogonality of $\omega_0$ and $\ker\mathbf B$, together with \eqref{eq:sp-coercive}, shows that a minimizer $y^*\in\mathbb R^{\binom m2-r}$ lies in the cube
	\[
	\left[
	-\frac{\Psi_{\pLie}(\Omega(\omega_0))}{4\pi},
	\frac{\Psi_{\pLie}(\Omega(\omega_0))}{4\pi}
	\right]^{\binom m2-r}.
	\]
	Evaluate the objective on any finite mesh of this cube whose coordinate spacing is at most $\eta$, and denote the smallest sampled value by $M_\eta$. The covering radius is at most $\tfrac12\sqrt{\binom m2-r}\,\eta$, so \eqref{eq:sp-Psi-Lipschitz} gives
	\begin{equation}\label{eq:sp-grid-enclosure}
		\max\left\{0,\,
		M_\eta
		-2\pi\left(\sum_{j=1}^{\kappa}j^2\right)^{1/2}
		\sqrt{\binom m2-r}\,\eta\right\}
		\le\mathfrak F_{G,K}(Z)\le M_\eta.
	\end{equation}
	If $\binom m2=r$, the feasible point $\omega_0$ is unique. Otherwise the mesh cardinality grows exponentially with $\dim\ker\mathbf B$.
	
	This finite coefficient is the vertical distance in the free two-step model. Let $\mathbb F_2(\pLie)$ be the free two-step Carnot group with Lie algebra
	\[
	\pLie\oplus\Lambda^2\pLie,
	\qquad
	[X,Y]_{\mathrm{free}}=X\wedge Y.
	\]
	In exponential coordinates its product is
	\begin{equation*}
		(X,\Omega)(Y,\Omega')
		=
		\left(X+Y,\Omega+\Omega'+\frac12X\wedge Y\right).
	\end{equation*}
	Give the first layer the original inner product and set
	\[
	\Delta_{\mathrm{free}}
	:=\sum_{i=1}^m(\widetilde X_i^{\mathrm{free}})^2,
	\]
	the horizontal sub-Laplacian formed from the corresponding left-invariant fields. Let $q_t^{\mathrm{free}}$ be its heat kernel relative to the Lebesgue measure in exponential coordinates. The homogeneous dimension is $m^2$. The vertical distance identity below goes back to Brockett \cite[Theorem~2]{Brockett1982}; we give a heat kernel proof in the present normalization.
	
	\begin{lemma}\label{lem:sp-free-radial}
		Identify $\Lambda^2\pLie$ with $\mathfrak{so}(\pLie)$ by \eqref{eq:sp-J} and equip $\mathfrak{so}(\pLie)$ with the metric $\langle A,C\rangle=-\tfrac12\Tr(AC)$. We use the Lebesgue measure induced by this metric on $\mathfrak{so}(\pLie)$ and the normalized Haar measure on $\mathrm O(m)$. Let $\Omega$ be regular, with $\sigma_1>\cdots>\sigma_\kappa>0$. There are nonzero constants $c_{2\kappa}^{\circ}$ and $c_{2\kappa+1}^{\circ}$, fixed by these measure normalizations, such that
		\begin{align}
			q_1^{\mathrm{free}}(0,\Omega)
			&=
			\frac{c_{2\kappa}^{\circ}}
			{\displaystyle\prod_{j<k}(\sigma_j^2-\sigma_k^2)}
			\det\!\left[\widehat w^{(2j-2)}(\sigma_k/2)\right]_{j,k=1}^{\kappa},
			&&m=2\kappa,
			\label{eq:sp-free-radial-even}\\
			q_1^{\mathrm{free}}(0,\Omega)
			&=
			\frac{c_{2\kappa+1}^{\circ}}
			{\displaystyle(\prod_j\sigma_j)
				\prod_{j<k}(\sigma_j^2-\sigma_k^2)}
			\det\!\left[\widehat w^{(2j-1)}(\sigma_k/2)\right]_{j,k=1}^{\kappa},
			&&m=2\kappa+1,
			\label{eq:sp-free-radial-odd}
		\end{align}
		where $w(a)=(a/2)/\sinh(a/2)$ is the function defined in \Cref{sec:preliminaries}, and
		\begin{equation}\label{eq:sp-scalar-transform}
			\widehat w(x):=\int_{\mathbb R}e^{-iax}w(a)\,\mathrm da
			=\pi^2\operatorname{sech}^2(\pi x).
		\end{equation}
	\end{lemma}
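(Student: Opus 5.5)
The plan is to start from the classical Gaveau--Cygan Fourier representation of the free two-step heat kernel and then apply a Harish-Chandra--Itzykson--Zuber type orbital integral on $\mathfrak{so}(\pLie)$.

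\emph{Step 1: the Fourier formula.} Take the partial Fourier transform in the central variable $\Omega\in\Lambda^2\pLie\cong\mathfrak{so}(\pLie)$. For a dual skew operator $A\in\mathfrak{so}(\pLie)$, the operator $\Delta_{\mathrm{free}}$ becomes a magnetic (Landau-type) harmonic oscillator on $\pLie$ with field $A$. Mehler's formula then gives
\[
q_1^{\mathrm{free}}(0,\Omega)=c\int_{\mathfrak{so}(\pLie)}e^{-i\langle A,\Omega\rangle}\det\!\Bigl(\frac{A/2}{\sinh(A/2)}\Bigr)^{1/2}\,\mathrm dA .
\]
Normalizing $A$ by skew-normal form with parameters $a_1,\dots,a_\kappa$, the determinant factor equals $\prod_j w(a_j)$, where each $w(a_j)$ arises from one rotation two-plane. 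The scaling constants coming from the factor $\tfrac12$ in the product law and from the metric $-\tfrac12\Tr$ go into $c$. They are also responsible for the argument $\sigma_k/2$ in the final formula.

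\emph{Step 2: reduction to the torus.} The integrand $\prod_j w(a_j)$ is $\mathrm O(m)$-conjugation invariant, so I apply the Lie algebra Weyl integration formula for $\mathfrak{so}(m)$, the analogue of \eqref{eq:weyl-lie}. This reduces the $A$-integral to the Cartan variables $a\in\R^\kappa$, with density $|\Pi(a)|^2$ times the orbital integral
\[
\int_{\mathrm O(m)}e^{-i\langle \Ad(k)a,\Omega\rangle}\,\mathrm dk .
\]
The Harish-Chandra formula evaluates this orbital integral as
\[
\frac{\sum_{s\in W}\epsilon(s)e^{-i\langle sa,\sigma\rangle}}{\Pi(a)\Pi(\sigma)}
\]
up to constants. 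Here $W$ and $\Pi$ are as follows.
\begin{itemize}
\item Type $D_\kappa$ ($m$ even): $\Pi(a)=\prod_{j<k}(a_j^2-a_k^2)$.
\item Type $B_\kappa$ ($m$ odd): $\Pi(a)=\prod_j a_j\prod_{j<k}(a_j^2-a_k^2)$.
\end{itemize}
One copy of $\Pi(a)$ cancels, and $\Pi(\sigma)$ gives exactly the prefactors in \eqref{eq:sp-free-radial-even} and \eqref{eq:sp-free-radial-odd}. For $\mathrm O(m)$ rather than $\mathrm{SO}(m)$ in the even case, sign changes of a single coordinate are also allowed. Because the remaining integrand is even in each $a_j$, this causes no harm.

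\emph{Step 3: the determinant.} The remaining integral is
\[
\int_{\R^\kappa}\Pi(a)\prod_j w(a_j)\sum_{s}\epsilon(s)e^{-i\langle sa,\sigma/2\rangle}\,\mathrm da .
\]
By antisymmetry, the alternating sum over $s$ can be absorbed into $\Pi(a)$, leaving $|W|\,\Pi(a)\,e^{-i\langle a,\sigma/2\rangle}$. I then expand $\Pi(a)$ as a Vandermonde determinant:
\begin{itemize}
\item in $a_j^2$ (even case), giving the entries $a_k^{2j-2}$;
\item in $a_j^2$ with an extra factor $a_k$ (odd case), giving the entries $a_k^{2j-1}$.
\end{itemize}
Since the integral factorizes over $k$, Andréief's identity (or simply multilinearity, because each column depends on a single variable) turns the integral into
\[
\det\Bigl[\int a^{p_j}w(a)e^{-ia\sigma_k/2}\,\mathrm da\Bigr]=\det\bigl[i^{p_j}\widehat w^{(p_j)}(\sigma_k/2)\bigr].
\]
The powers of $i$ are absorbed into $c^\circ$. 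Regularity of $\Omega$ ensures that $\Pi(\sigma)\neq0$.

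\emph{Step 4: the scalar transform.} The identity \eqref{eq:sp-scalar-transform} is the standard transform of $x/\sinh x$. I would verify it by residues, or by differentiating the known pair $\operatorname{sech}\leftrightarrow\operatorname{sech}$.

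\emph{Main obstacle.} The hard step is justifying the interchange and the Harish-Chandra manipulation. The Fourier integral is only conditionally convergent in the singular directions, although $w$ decays exponentially, so after reduction every integral is absolutely convergent. I would handle this by inserting a Gaussian damping $e^{-\epsilon|A|^2}$ and letting $\epsilon\to0$. Separately, the exact constants and signs for $\mathrm O(m)$ versus $\mathrm{SO}(m)$ must be tracked; this is why the statement only asserts that the constants are nonzero.
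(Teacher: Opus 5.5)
Your proposal is correct and follows the paper's proof essentially step for step: the Cygan--Mehler central Fourier formula, Weyl integration on $\mathfrak{so}(\pLie)$, the orthogonal Harish-Chandra orbital integral with one discriminant cancelling and the $\mathrm O(2\kappa)$ average removing the Pfaffian-sensitive part, then column multilinearity (which is exactly the Andr\'eief step the paper invokes), and finally residues for $\widehat w$. Two fixes are needed in Step~1. First, for skew $A$ the factor must be written $\det\bigl(\tfrac{|A|/2}{\sinh(|A|/2)}\bigr)^{1/2}=\prod_j w(a_j)$, since $\sinh(A/2)$ itself produces $\sin$; this weight must be paired with the phase $e^{-i\langle A,\Omega\rangle/2}$ as in the paper (with $e^{-i\langle A,\Omega\rangle}$ the weight would be $\prod_j w(2a_j)$), because rescaling the argument to $\sigma_k/2$ cannot be absorbed into the constant $c$. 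Second, no Gaussian damping is needed: $\prod_j w(a_j(A))\le w(a_1(A))$ decays exponentially in $|A|$, so the Fourier integral is already absolutely convergent.
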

	
	\begin{proof}
		Fourier transformation in the center and Mehler's formula give
		\begin{equation}\label{eq:sp-free-central-Fourier}
			q_1^{\mathrm{free}}(0,\Omega)
			=c_m\int_{\mathfrak{so}(\pLie)}
			e^{-i\langle A,\Omega\rangle/2}
			\prod_{j=1}^{\kappa}w(a_j(A))\,\mathrm dA,
		\end{equation}
		where $a_j=a_j(A)$ are the positive singular values of $A\in\mathfrak{so}(\pLie)$ and $c_m>0$ is the Fourier normalization; see \cite[formula~(4.9$'$)]{Cygan1979}. The integrand apart from the phase is invariant under the full orthogonal group.
		
		Choose the standard skew Cartan with $2$-by-$2$ blocks $a_j\bigl(\begin{smallmatrix}0&1\\-1&0\end{smallmatrix}\bigr)$. The metric Weyl integration formula has Jacobian
		\[
		\prod_{j<k}(a_j^2-a_k^2)^2
		\quad(m=2\kappa),
		\qquad
		\prod_j a_j^2\prod_{j<k}(a_j^2-a_k^2)^2
		\quad(m=2\kappa+1).
		\]
		Put
		\[
		\Delta(a^2):=\prod_{j<k}(a_j^2-a_k^2),
		\qquad
		\Delta(\sigma^2):=\prod_{j<k}(\sigma_j^2-\sigma_k^2).
		\]
		For Cartan representatives $A(a)$ and $\Omega(\sigma)$, the orthogonal orbital integral formulas with Haar probability measure read
		\begin{align*}
			\int_{\mathrm O(2\kappa)}
			e^{-i\langle A(a),U\Omega(\sigma)U^{-1}\rangle/2}\,\mathrm dU
			&=C_{2\kappa}^{\mathrm{orb}}
			\frac{\det[\cos(a_j\sigma_k/2)]_{j,k=1}^{\kappa}}
			{\Delta(a^2)\Delta(\sigma^2)},\\
			\int_{\mathrm O(2\kappa+1)}
			e^{-i\langle A(a),U\Omega(\sigma)U^{-1}\rangle/2}\,\mathrm dU
			&=C_{2\kappa+1}^{\mathrm{orb}}
			\frac{\det[\sin(a_j\sigma_k/2)]_{j,k=1}^{\kappa}}
			{(\prod_j a_j\sigma_j)\Delta(a^2)\Delta(\sigma^2)}.
		\end{align*}
		These are the orthogonal group formulas in \cite[Equations~(4.16) and~(4.19)]{McSwiggen2021}, analytically continued from the hyperbolic to the trigonometric form and rescaled from the Hilbert-Schmidt Cartan coordinates to $\langle A,C\rangle=-\tfrac12\Tr(AC)$. The phase in \eqref{eq:sp-free-central-Fourier} then fixes the arguments $a_j\sigma_k/2$. With the displayed discriminant convention and $K_\kappa:=\kappa(\kappa-1)/2$, normalization at $A=0$ gives
		\begin{align*}
			C_{2\kappa}^{\mathrm{orb}}
			&=(-1)^{K_\kappa}2^{\kappa(\kappa-1)}
			\prod_{r=0}^{\kappa-1}(2r)!,\\
			C_{2\kappa+1}^{\mathrm{orb}}
			&=(-1)^{K_\kappa}2^{\kappa^2}
			\prod_{r=0}^{\kappa-1}(2r+1)!.
		\end{align*}
		Indeed, divide the Taylor determinants at $A=0$ by $\Delta(a^2)\Delta(\sigma^2)$; their leading coefficients are respectively $(-1)^{K_\kappa}2^{-\kappa(\kappa-1)}/ \prod_{r=0}^{\kappa-1}(2r)!$ and $(-1)^{K_\kappa}2^{-\kappa^2}/ \prod_{r=0}^{\kappa-1}(2r+1)!$. In even dimension the average is over $\mathrm O(2\kappa)$. Its two connected components exchange the sign of the Pfaffian, so the Pfaffian-sensitive term in the $\mathrm{SO}(2\kappa)$ formula cancels and the cosine determinant remains.
		
		One Weyl discriminant cancels in each case. Andr\'eief's identity for a general measure \cite[Proposition~7.1]{Zygouras2022}, applied with $\mathrm d\mu(a)=w(a)\,\mathrm da$, converts the remaining discriminant and the sine or cosine determinant into the derivative determinants in \eqref{eq:sp-free-radial-even}-\eqref{eq:sp-free-radial-odd}. The metric Weyl constant and the Fourier factor $c_m$ are positive, the orbital constants displayed above are nonzero, and the passage from the Weyl chamber to $\mathbb R^\kappa$ together with Andr\'eief's identity contributes fixed nonzero factors. Their product is the finite nonzero constant $c_m^{\circ}$ in the statement. Finally, residue summation at $2\pi i\mathbb Z\setminus\{0\}$ gives \eqref{eq:sp-scalar-transform}, including its value at zero by continuity.
	\end{proof}
	
	\begin{lemma}\label{lem:sp-lowest-frequency}
		For regular $\Omega$ and $\varrho\to\infty$,
		\begin{equation*}
			q_1^{\mathrm{free}}(0,\varrho\Omega)
			=C(\Omega)\varrho^{-\delta_m}
			\exp\!\left(-\pi\varrho
			\sum_{j=1}^{\kappa}j\sigma_j(\Omega)\right)
			\left(1+O(e^{-\eta(\Omega)\varrho})\right),
		\end{equation*}
		where $C(\Omega)>0$, $\eta(\Omega)>0$, and
		\[
		\delta_{2\kappa}=\kappa(\kappa-1),
		\qquad
		\delta_{2\kappa+1}=\kappa^2.
		\]
	\end{lemma}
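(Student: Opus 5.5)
The plan is to insert $\varrho\Omega$ into the exact radial formulas \eqref{eq:sp-free-radial-even}--\eqref{eq:sp-free-radial-odd} of \Cref{lem:sp-free-radial} and expand the derivative determinant as an exponential series. Since $\sigma_k(\varrho\Omega)=\varrho\sigma_k(\Omega)$, the Vandermonde-type prefactors scale explicitly. In the even case $\prod_{j<k}(\varrho^2\sigma_j^2-\varrho^2\sigma_k^2)=\varrho^{\kappa(\kappa-1)}\Delta(\sigma^2)$. In the odd case there is an extra factor $\varrho^{\kappa}\prod_j\sigma_j$. This already produces the power $\varrho^{-\delta_m}$ with $\delta_{2\kappa}=\kappa(\kappa-1)$ and $\delta_{2\kappa+1}=\kappa(\kappa-1)+\kappa=\kappa^2$. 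So it remains to show that the determinant equals a nonzero constant times $\exp(-\pi\varrho\sum_j j\sigma_j)(1+O(e^{-\eta\varrho}))$.

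For $x>0$ I will use the expansion
\[
\widehat w(x)=\pi^2\operatorname{sech}^2(\pi x)=\frac{4\pi^2e^{-2\pi x}}{(1+e^{-2\pi x})^2}
=4\pi^2\sum_{n\ge1}(-1)^{n-1}n\,e^{-2\pi nx}.
\]
This series converges together with all term-by-term derivatives locally uniformly on $(0,\infty)$. Hence
\[
\widehat w^{(p)}(x)=4\pi^2\sum_{n\ge1}(-1)^{n-1}n(-2\pi n)^p e^{-2\pi nx}.
\]
At $x=\varrho\sigma_k/2$ the $n$-th term carries the factor $e^{-\pi n\varrho\sigma_k}$. Insert this into the $k$-th column of $\det[\widehat w^{(2j-2+\epsilon)}(\varrho\sigma_k/2)]$, where $\epsilon=0$ for even $m$ and $\epsilon=1$ for odd $m$. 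Multilinearity in columns then gives an absolutely convergent sum over tuples $(n_1,\dots,n_\kappa)\in\mathbb N^\kappa$:
\[
\sum_{n_1,\dots,n_\kappa}\prod_k\Bigl(4\pi^2(-1)^{n_k-1}n_k(-2\pi n_k)^{\epsilon}e^{-\pi n_k\varrho\sigma_k}\Bigr)\,
\det\bigl[(2\pi n_k)^{2j-2}\bigr]_{j,k}.
\]
The remaining determinant is a Vandermonde determinant in the variables $n_k^2$, so it vanishes unless the $n_k$ are pairwise distinct.

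Next I minimize the exponent $\sum_k n_k\sigma_k$ over distinct positive integers. Because $\sigma_1>\cdots>\sigma_\kappa>0$, the rearrangement inequality makes $n_k=k$ the unique minimizer, with minimal value $\sum_k k\sigma_k$. Its coefficient is a product of nonzero factors times the Vandermonde $\prod_{j<k}(4\pi^2)(k^2-j^2)\neq0$. Any other admissible tuple exceeds the minimum by at least
\[
\eta_0:=\min\Bigl(\sigma_\kappa,\ \min_k(\sigma_k-\sigma_{k+1})\Bigr)>0.
\]
To see this, either some $n_k$ can be decreased, or the tuple is a nontrivial permutation of $\{1,\dots,\kappa\}$ composed with increases; both possibilities cost at least a gap.

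The main obstacle is controlling the tail uniformly in $\varrho\ge1$. I will bound all non-minimal terms by
\[
C\sum_{n}\prod_k n_k^{2\kappa+1}e^{-\pi\varrho n_k\sigma_k}\cdot \mathbf 1\Bigl[\textstyle\sum n_k\sigma_k\ge\sum k\sigma_k+\eta_0\Bigr].
\]
This is at most $e^{-\pi\varrho(\sum k\sigma_k+\eta_0/2)}$ times a convergent sum; the factor $e^{-\pi\varrho\eta_0/2}$ is split off, and the remaining sum is bounded using $\varrho\ge1$. The estimate gives $\eta(\Omega)=\pi\eta_0/2$.

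Finally, collecting the constants $c_m^\circ$, the discriminant $\Delta(\sigma^2)$ (and $\prod_j\sigma_j$ in the odd case), and the leading coefficient yields a nonzero $C(\Omega)$. Its sign is forced to be positive because $q_1^{\mathrm{free}}>0$ and the leading term dominates as $\varrho\to\infty$.
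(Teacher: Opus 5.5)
Your proposal is correct and follows essentially the same route as the paper's proof. The common steps are the column-wise expansion of the derivative determinant via the exponential series of $\widehat w$, Vandermonde vanishing for coincident frequencies, and the rearrangement inequality singling out $(1,\ldots,\kappa)$; the proof is completed by a polynomially weighted tail bound for $\varrho\ge1$, the $\varrho^{-\delta_m}$ scaling of the denominators, and positivity of the heat kernel to fix the sign of $C(\Omega)$. The only difference is that you make the gap explicit, $\eta_0=\min(\sigma_\kappa,\min_k(\sigma_k-\sigma_{k+1}))$. This is valid, for example by Abel summation of $\sum_k(n_k-k)\sigma_k$, whereas the paper only asserts that the minimal gap $d_\Omega$ exists and is positive.
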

	
	\begin{proof}
		For $x>0$ and $r\ge0$,
		\begin{equation*}
			\widehat w^{(r)}(x)
			=4\pi^2\sum_{\ell\ge1}
			(-1)^{\ell-1}\ell(-2\pi\ell)^r e^{-2\pi\ell x},
		\end{equation*}
		with absolute and locally uniform convergence. Insert this expansion into \Cref{lem:sp-free-radial} at $\varrho\Omega$. Expanding the determinant by columns gives a normally convergent series indexed by $\boldsymbol\ell=(\ell_1,\ldots,\ell_\kappa)\in\mathbb N^\kappa$, with exponential factor
		\[
		\exp\!\left(-\pi\varrho
		\sum_{k=1}^{\kappa}\ell_k\sigma_k\right).
		\]
		In even dimension the coefficient of this exponential in the derivative determinant is
		\[
		(4\pi^2)^\kappa(2\pi)^{\kappa(\kappa-1)}
		(-1)^{\sum_k(\ell_k-1)}
		\det[\ell_k^{2j-1}]_{j,k=1}^{\kappa},
		\]
		whereas in odd dimension it is
		\[
		(4\pi^2)^\kappa(-2\pi)^{\kappa^2}
		(-1)^{\sum_k(\ell_k-1)}
		\det[\ell_k^{2j}]_{j,k=1}^{\kappa}.
		\]
		The two determinants factor as
		\begin{align*}
			\det[\ell_k^{2j-1}]_{j,k=1}^{\kappa}
			&=\left(\prod_{k=1}^{\kappa}\ell_k\right)
			\prod_{j<k}(\ell_k^2-\ell_j^2),\\
			\det[\ell_k^{2j}]_{j,k=1}^{\kappa}
			&=\left(\prod_{k=1}^{\kappa}\ell_k^2\right)
			\prod_{j<k}(\ell_k^2-\ell_j^2).
		\end{align*}
		Thus a term vanishes exactly when two frequencies $\ell_j$ and $\ell_k$ coincide. For a fixed set $1\le r_1<\cdots<r_\kappa$, the strict rearrangement inequality, applied to $\sigma_1>\cdots>\sigma_\kappa>0$, assigns $r_k$ to $\sigma_k$ in the unique minimizing order. Since $r_k\ge k$, the unique minimum over all distinct positive frequencies is $\boldsymbol\ell=(1,\ldots,\kappa)$. Both displayed Vandermonde factors are nonzero at this vector.
		
		Put
		\[
		d_\Omega:=
		\min_{\substack{\boldsymbol\ell\in\mathbb N^\kappa,
				\,\ell_i\ne\ell_j\ (i\ne j)\\
				\boldsymbol\ell\ne(1,\ldots,\kappa)}}
		\left(\sum_{k=1}^{\kappa}\ell_k\sigma_k-
		\sum_{k=1}^{\kappa}k\sigma_k\right).
		\]
		This minimum exists and is positive. Indeed, $\sigma_\kappa>0$ makes the set of integer vectors below any fixed exponent level finite, and the preceding argument gives a unique minimizer. Choose $0<\eta(\Omega)<\pi d_\Omega/2$. The coefficients above have polynomial growth in $\boldsymbol\ell$, and, for $\varrho\ge1$, the remaining exponential factor is bounded by
		\[
		e^{-\eta(\Omega)\varrho}
		\exp\!\left[-\frac{\pi}{2}
		\left(\sum_k\ell_k\sigma_k-\sum_k k\sigma_k\right)\right].
		\]
		The resulting polynomially weighted series converges. Hence all remaining terms are bounded by the leading exponential times $O(e^{-\eta(\Omega)\varrho})$.
		
		The denominator in \eqref{eq:sp-free-radial-even} scales as $\varrho^{\kappa(\kappa-1)}$; in odd dimension the additional factor $\prod_j\sigma_j$ contributes $\varrho^\kappa$, giving $\delta_{2\kappa+1}=\kappa^2$. Hence the leading coefficient is nonzero. The heat kernel is positive, so that coefficient is positive.
	\end{proof}
	
	\begin{theorem}\label{thm:sp-free-distance}
		For every $\Omega\in\Lambda^2\pLie$,
		\begin{equation}\label{eq:sp-free-heat}
			-\lim_{t\downarrow0}4t\log q_t^{\mathrm{free}}(0,\Omega)
			=\Psi_{\pLie}(\Omega).
		\end{equation}
		Equivalently,
		\begin{equation}\label{eq:sp-free-distance}
			d_{\mathrm{free}}(e,(0,\Omega))^2
			=\Psi_{\pLie}(\Omega).
		\end{equation}
	\end{theorem}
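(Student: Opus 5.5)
The plan is to combine the exact large-distance asymptotics of \Cref{lem:sp-lowest-frequency} with homogeneity of the free group, and then to pass from regular $\Omega$ to all $\Omega$ by a continuity argument. The first step is dilation. The dilations $\delta_\lambda(X,\Omega)=(\lambda X,\lambda^2\Omega)$ are automorphisms of $\mathbb F_2(\pLie)$ that scale the horizontal fields by $\lambda^{-1}$. Since the homogeneous dimension is $m^2$, this gives
\[
q_t^{\mathrm{free}}(0,\Omega)=t^{-m^2/2}\,q_1^{\mathrm{free}}(0,t^{-1}\Omega).
\]
For regular $\Omega$ we take $\varrho=t^{-1}$ in \Cref{lem:sp-lowest-frequency}:
\[
4t\log q_t^{\mathrm{free}}(0,\Omega)
=-2m^2t\log t^{-1}+4t\log C(\Omega)+4\delta_m t\log t
-4\pi\sum_j j\sigma_j(\Omega)+O(t e^{-\eta/t}).
\]
Letting $t\downarrow0$ yields \eqref{eq:sp-free-heat} on the open dense set of regular $\Omega$. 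Note that $4\pi\sum_j j\sigma_j=\Psi_{\pLie}$, which matches the normalization.

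The second step converts heat to distance. $\mathbb F_2(\pLie)$ is a Carnot group, so its sub-Laplacian is a bracket-generating sum of squares of divergence-free fields with respect to Lebesgue (Haar) measure. The Varadhan--L\'eandre formula, in the form used for \eqref{eq:leandre-logarithmic}, is valid on Carnot groups, for instance via homogeneity and the global Gaussian bounds. It gives
\[
\lim_{t\downarrow 0}\bigl(-4t\log q_t^{\mathrm{free}}(0,\Omega)\bigr)=d_{\mathrm{free}}(e,(0,\Omega))^2
\quad\text{for every }\Omega.
\]
Combining this with the first step proves \eqref{eq:sp-free-distance}, and hence \eqref{eq:sp-free-heat}, for regular $\Omega$.

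The third step extends to all $\Omega$. Both sides of \eqref{eq:sp-free-distance} are continuous in $\Omega$. The right side is Lipschitz by \eqref{eq:sp-Psi-Lipschitz}. On the left, the Carnot--Carath\'eodory distance is continuous with respect to the manifold topology, because the structure is bracket generating. Regular skew operators, those with distinct positive normal-form coefficients (and, when $m=2\kappa$, $\sigma_\kappa>0$), are dense. Hence equality of squared distances holds for every $\Omega$. Since the L\'eandre limit exists at every point, \eqref{eq:sp-free-heat} then follows for every $\Omega$ from the distance identity.

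The main obstacle is the precise uniformity in the first step. \Cref{lem:sp-lowest-frequency} gives a remainder $O(e^{-\eta(\Omega)\varrho})$ and a constant $C(\Omega)$ that may degenerate near singular $\Omega$. However, only pointwise limits at fixed regular $\Omega$ are needed, and continuity of $d_{\mathrm{free}}$ handles the rest, so no uniformity is required. The remaining issue is to confirm carefully that the logarithmic heat kernel limit applies to this noncompact but homogeneous setting. I would justify it by the standard two-sided Gaussian estimates
\[
c\,t^{-m^2/2}e^{-d^2/((4-\epsilon)t)}\le q_t^{\mathrm{free}}\le C_\epsilon\, t^{-m^2/2}e^{-d^2/((4+\epsilon)t)}
\]
on nilpotent Lie groups, together with L\'eandre's sharp constants, which are local.
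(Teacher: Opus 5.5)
Your proposal is correct and follows the paper's proof essentially step for step. Both use the dilation identity together with \Cref{lem:sp-lowest-frequency} on the regular set, then L\'eandre's logarithmic formula on the free Carnot group to identify that rate with $d_{\mathrm{free}}^2$, then density of regular $\Omega$ and continuity of both sides to extend the distance identity, and finally one more application of the logarithmic formula to recover the heat limit for every $\Omega$. The only slip is cosmetic: the dilation term should read $-2m^2t\log t$ rather than $-2m^2t\log t^{-1}$, which does not matter since it tends to zero.
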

	
	\begin{proof}
		The assertion is immediate for $m\le1$. For regular $\Omega$, the $2$-homogeneity of $\Delta_{\mathrm{free}}$ under the standard anisotropic dilations $(X,\Omega)\mapsto(rX,r^2\Omega)$ gives
		\begin{equation}\label{eq:sp-free-scaling}
			q_t^{\mathrm{free}}(0,\Omega)
			=t^{-m^2/2}q_1^{\mathrm{free}}(0,\Omega/t).
		\end{equation}
		Combining \eqref{eq:sp-free-scaling} with \Cref{lem:sp-lowest-frequency} yields
		\[
		\log q_t^{\mathrm{free}}(0,\Omega)
		=-\frac{\pi}{t}\sum_{j=1}^{\kappa}j\sigma_j(\Omega)
		+O_\Omega(\log(1/t)),
		\]
		and hence \eqref{eq:sp-free-heat} on the regular set.
		
		The free group is complete, the Lebesgue measure in exponential coordinates is Haar measure, and the left-invariant horizontal fields have zero divergence \cite{AgrachevBarilariBoscain,AgrachevBoscainGauthierRossi2009}. L\'eandre's logarithmic formula \cite[Theorem~17]{BarilariBoscainNeel2012} therefore identifies the regular rate with $d_{\mathrm{free}}^2$, proving \eqref{eq:sp-free-distance} for regular $\Omega$. The regular set is dense, while both the distance squared and $\Psi_{\pLie}$ are continuous. Thus \eqref{eq:sp-free-distance} extends to every $\Omega$. Applying the same logarithmic formula at the remaining points then gives \eqref{eq:sp-free-heat} everywhere.
	\end{proof}
	
	\begin{remark}
		The poles used in the residue calculation leading to \eqref{eq:sp-scalar-transform} are those of
		\[
		w(a)=\frac{a/2}{\sinh(a/2)},
		\]
		namely $2\pi i\Z\setminus\{0\}$. They produce the exponential levels $e^{-2\pi\ell x}$, $\ell\ge1$, in the expansion of $\widehat w(x)$. Orthogonal orbital integration produces an alternating determinant, so its radial columns must use distinct positive levels. The least possible levels are $1,\ldots,\kappa$. The relevant rank is the skew rank of a lift in $\Lambda^2\pLie$; the number of nonzero terms is half that rank.
	\end{remark}
	
	For an arbitrary two-step quotient, the corresponding vertical squared distance is obtained by minimizing $\Psi_{\pLie}$ over the affine fibers of the bracket map, as in the construction below. The free result descends to the tangent group and its loop problem. For the present symmetric pair, the tangent group $\mathbb N_{G,K}$ fixed in \eqref{eq:tangent-group-product} has product
	\begin{equation*}
		(X,Z)(Y,W)
		=
		\left(X+Y,Z+W+\frac12[X,Y]\right).
	\end{equation*}
	
	\begin{proposition}\label{prop:sp-central-quotient}
		The map
		\begin{equation}\label{eq:sp-quotient-map}
			q_B:\mathbb F_2(\pLie)\longrightarrow\mathbb N_{G,K},
			\qquad
			q_B(X,\Omega)=(X,B\Omega),
		\end{equation}
		is a surjective Carnot group homomorphism with central kernel $\ker B$. Its differential is the identity on the horizontal layer, so it is a submetry. Equip all linear spaces with the Lebesgue measures induced by their fixed inner products, and let $\mathrm d\sigma_Z$ be the induced Hausdorff measure on the affine fiber $B^{-1}(Z)$. Then
		\begin{equation}\label{eq:sp-fiber-heat}
			p_t^0(X,Z)
			=
			\det(BB^*)^{-1/2}
			\int_{B^{-1}(Z)}q_t^{\mathrm{free}}(X,\Omega)\,
			\mathrm d\sigma_Z(\Omega).
		\end{equation}
	\end{proposition}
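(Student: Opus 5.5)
The plan has three parts: algebra and the submetry property first, then the heat kernel pushforward, and finally the normalization constant.

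\emph{Algebraic part.} I would verify the homomorphism property directly from the two product formulas. We have
\[
q_B\bigl((X,\Omega)(Y,\Omega')\bigr)=\Bigl(X+Y,\,B\Omega+B\Omega'+\tfrac12 B(X\wedge Y)\Bigr),
\]
and $B(X\wedge Y)=[X,Y]$ by \eqref{eq:sp-B}. This is exactly the product $(X,B\Omega)(Y,B\Omega')$ in $\mathbb N_{G,K}$. Surjectivity follows from surjectivity of $B$, which is \eqref{eq:sp-bracket-surjective}. The kernel is $\{0\}\times\ker B$, and it is central because $\Lambda^2\pLie$ is central in $\mathbb F_2(\pLie)$. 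The differential of $q_B$ at the identity is $\mathrm{id}\oplus B$, and it intertwines the dilations. Hence $q_B$ maps each left-invariant horizontal field $\widetilde X_i^{\mathrm{free}}$ to $\widetilde X_i$ on $\mathbb N_{G,K}$.

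\emph{Submetry.} Horizontal curves project to horizontal curves of the same length, so $q_B$ is $1$-Lipschitz. Conversely, every horizontal curve in $\mathbb N_{G,K}$ has a horizontal lift through any prescribed point of the fiber, with the same control. I would obtain the lift by solving the free control system with the same $u(t)\in\pLie$. The projection of this lift solves the tangent system with the same initial point, so by uniqueness it is the given curve. Hence metric balls map onto metric balls, which is the submetry property.

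\emph{Heat kernel part.} Since $q_B$ intertwines the horizontal fields, we have $\Delta_{\mathrm{free}}(f\circ q_B)=(\Delta_{\mathbb N}f)\circ q_B$ for smooth $f$ on $\mathbb N_{G,K}$. Take $f$ bounded and smooth. By uniqueness of bounded solutions of the heat equation on these groups, $e^{t\Delta_{\mathrm{free}}}(f\circ q_B)=(e^{t\Delta_{\mathbb N}}f)\circ q_B$. Evaluating at the identity and writing both sides with the respective heat kernels, I would use the convolution formula with the kernel at $g^{-1}h$ and evaluate at $g=e$. This gives
\[
\int_{\mathbb N}p_t^0(x)f(x)\,dx=\int_{\mathbb F_2}q_t^{\mathrm{free}}(y)f(q_B(y))\,dy .
\]
Thus $p_t^0\,dx$ is the pushforward of $q_t^{\mathrm{free}}\,dy$ under $q_B$. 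Some care is needed to ensure the kernels are identified with respect to Lebesgue measure in exponential coordinates, which is Haar measure.

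\emph{Normalization (the main obstacle).} The one step needing a computation is the coarea formula for the linear surjection $B\colon\Lambda^2\pLie\to\kLie$ with the first coordinate $X$ left fixed. For an integrable $g$,
\[
\int_{\Lambda^2\pLie}g\,d\Omega=\int_{\kLie}\int_{B^{-1}(Z)}g\,\frac{d\sigma_Z}{J_B}\,dZ,
\]
where $J_B=\det(BB^*)^{1/2}$ is the normal Jacobian. To justify this, I would decompose $\Lambda^2\pLie=\ker B\oplus(\ker B)^\perp$. The restriction of $B$ to $(\ker B)^\perp$ is an isomorphism with $|\det|=\det(BB^*)^{1/2}$, and $d\sigma_Z$ is the translate of Lebesgue measure on $\ker B$. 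Applying Fubini in the $X$ variable then yields \eqref{eq:sp-fiber-heat}. Finally, the right-hand side is continuous in $(X,Z)$ because of the Gaussian-type decay of $q_t^{\mathrm{free}}$ in $\Omega$, so the identity holds pointwise and not only almost everywhere.
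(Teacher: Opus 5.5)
Your proposal is correct and follows essentially the same route as the paper. Both arguments use the algebraic homomorphism check, the submetry via same-control horizontal lifts, $p_t^0$ as the pushforward of the free heat density by intertwining of the sub-Laplacians, and the linear coarea formula with normal Jacobian $\det(BB^*)^{1/2}$. Your dual formulation through $f\circ q_B$ with uniqueness of bounded solutions, and your continuity remark upgrading the a.e.\ identity to a pointwise one, only make explicit what the paper states more briefly.
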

	
	\begin{proof}
		The homomorphism identity follows from $B(X\wedge Y)=[X,Y]$, and surjectivity follows from \eqref{eq:sp-bracket-surjective}. Its kernel is $\{(0,\Omega):\Omega\in\ker B\}$ and therefore lies in the central second layer. If $u$ is a horizontal control, the free and quotient curves driven by $u$ have the same first-layer component, and their second-layer components are related by $B$. Thus every horizontal curve projects isometrically and every quotient curve has a horizontal lift with the same control and the same length. Taking infima of their lengths gives
		\begin{equation}\label{eq:sp-submetry}
			d_0(e,g)
			=
			\inf_{q_B(\widetilde g)=g}
			d_{\mathrm{free}}(e,\widetilde g).
		\end{equation}
		
		The sub-Laplacians intertwine under $q_B$, because the differential of $q_B$ is the identity on $\pLie$. Hence the pushforward of the free heat measure solves the heat equation on $\mathbb N_{G,K}$ with initial mass at the identity and must equal the tangent heat measure. For a test function $\varphi$, the linear map $B$ has constant normal Jacobian $J_B=\det(BB^*)^{1/2}$, and the linear coarea formula gives
		\begin{align*}
			&\int_{\pLie}\int_{\Lambda^2\pLie}
			\varphi(X,B\Omega)q_t^{\mathrm{free}}(X,\Omega)
			\,\mathrm d\Omega\,\mathrm dX \\
			&\quad=\det(BB^*)^{-1/2}
			\int_{\pLie}\int_{\kLie}\varphi(X,Z)
			\left(\int_{B^{-1}(Z)}q_t^{\mathrm{free}}(X,\Omega)
			\,\mathrm d\sigma_Z(\Omega)\right)\mathrm dZ\,\mathrm dX.
		\end{align*}
		Comparison with the tangent heat density proves \eqref{eq:sp-fiber-heat}.
	\end{proof}
	
	For $A\in\kLie$, let $\omega_1(A),\ldots,\omega_\kappa(A)$ be the nonnegative singular values of
	\[
	J_{B^*A}=\operatorname{ad}(A)|_{\pLie},
	\]
	as in \eqref{eq:sp-isotropy-bracket-bridge}. Take the Euclidean Fourier transform in the central variable with the convention
	\[
	\widehat f(A)=\int_{\kLie}e^{i\langle A,Z\rangle}f(Z)\,\mathrm dZ.
	\]
	The class-two Fourier-Mehler formula gives
	\[
	\widehat{p_t^0}(0,A)
	=(4\pi t)^{-m/2}
	\prod_{j=1}^{\kappa}w\bigl(2t\omega_j(A)\bigr);
	\]
	see \cite[formula~(5.5)]{Cygan1979}. Fourier inversion therefore yields
	\begin{equation}\label{eq:sp-tangent-fourier}
		p_t^0(0,Z)
		=
		\frac1{(2\pi)^r(4\pi t)^{m/2}}
		\int_{\kLie}e^{-i\langle A,Z\rangle}
		\prod_{j=1}^{\kappa}
		w\bigl(2t\omega_j(A)\bigr)\,\mathrm dA.
	\end{equation}
	Each zero factor is interpreted as $w(0)=1$ by continuity. Since $B$ is surjective by \eqref{eq:sp-bracket-surjective}, its adjoint $B^*$ is injective. Finite-dimensional norm equivalence gives $\sum_j\omega_j(A)\ge c|A|$; hence the product has exponential decay and the integral is absolutely convergent.
	
	\begin{remark}
		On a maximal torus of $K$, the nonzero $\omega_j(A)$ are the absolute values of the weights of the complexified isotropy representation, with multiplicity. Thus \eqref{eq:sp-tangent-fourier} may be written as a product over the positive isotropy weights.
	\end{remark}
	
	\begin{corollary}\label{thm:sp-tangent-distance}
		For every $Z\in\kLie$,
		\begin{equation*}
			-\lim_{t\downarrow0}4t\log p_t^0(0,Z)
			=
			d_0(e,(0,Z))^2
			=
			\mathfrak F_{G,K}(Z).
		\end{equation*}
		The fiber formula also satisfies
		\begin{equation*}
			-\lim_{t\downarrow0}4t\log
			\int_{B^{-1}(Z)}q_t^{\mathrm{free}}(0,\Omega)\,
			\mathrm d\sigma_Z(\Omega)
			=
			\mathfrak F_{G,K}(Z).
		\end{equation*}
	\end{corollary}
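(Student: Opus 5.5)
The corollary identifies three quantities: the heat rate, the tangent distance, and $\mathfrak F_{G,K}(Z)$. I plan to prove the middle equality, $d_0(e,(0,Z))^2=\mathfrak F_{G,K}(Z)$, geometrically, and then derive the two logarithmic limits from Léandre's formula.

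\textbf{The distance identity.} By the submetry identity \eqref{eq:sp-submetry} applied to $g=(0,Z)$, the fiber $q_B^{-1}(0,Z)$ is $\{(0,\Omega):B\Omega=Z\}$. Therefore
\[
d_0(e,(0,Z))^2=\inf_{B\Omega=Z}d_{\mathrm{free}}(e,(0,\Omega))^2 .
\]
By \eqref{eq:sp-free-distance} the integrand is $\Psi_{\pLie}(\Omega)$, so the infimum is exactly \eqref{eq:sp-F}. It is attained by the coercivity \eqref{eq:sp-coercive}. This gives $d_0(e,(0,Z))^2=\mathfrak F_{G,K}(Z)$.

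\textbf{The heat rate for $p_t^0$.} Next I will check the hypotheses of Léandre's formula \cite[Theorem~17]{BarilariBoscainNeel2012} on $\mathbb N_{G,K}$. The group is a Carnot group, so it is complete. Lebesgue measure in exponential coordinates is Haar measure, and the left-invariant horizontal fields are divergence free. These are the same checks as in the proof of \Cref{thm:sp-free-distance}. The formula then gives $-\lim_{t\downarrow0}4t\log p_t^0(0,Z)=d_0(e,(0,Z))^2$, which establishes the first chain of equalities.

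\textbf{The fiber integral.} For the fiber statement, I use \eqref{eq:sp-fiber-heat} at $X=0$:
\[
\int_{B^{-1}(Z)}q_t^{\mathrm{free}}(0,\Omega)\,\mathrm d\sigma_Z(\Omega)=\det(BB^*)^{1/2}\,p_t^0(0,Z).
\]
The constant factor does not affect $\lim 4t\log$, so this limit reduces to the previous one.

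\textbf{Main obstacle.} The only delicate point is applying Léandre's formula on the noncompact group $\mathbb N_{G,K}$, together with the fact that the heat kernel is a density with respect to Lebesgue measure in exponential coordinates. Both are settled by the Haar and divergence observations above. If a direct check is wanted instead, I would bound the fiber integral from below and above. For the lower bound, I would restrict to a neighbourhood of a minimizer and use local uniformity of the free Varadhan limit. For the upper bound, I would use the Gaussian upper bound $q_t^{\mathrm{free}}\le Ct^{-m^2/2}e^{-(d^2-\delta)/4t}$ together with the linear growth of $d_{\mathrm{free}}^2=\Psi_{\pLie}\ge4\pi|\Omega|$ along the fiber to control the tail. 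Citing Léandre is cleaner, and that is the route I will take.
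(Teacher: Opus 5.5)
Your proposal is correct and follows the paper's proof essentially step for step. The submetry identity \eqref{eq:sp-submetry}, together with \Cref{thm:sp-free-distance} and the definition \eqref{eq:sp-F}, gives $d_0(e,(0,Z))^2=\mathfrak F_{G,K}(Z)$; L\'eandre's logarithmic formula on the complete group $\mathbb N_{G,K}$ (Haar measure, divergence-free horizontal fields) gives the heat rate; and the fiber identity \eqref{eq:sp-fiber-heat} at $X=0$ transfers this rate to the fiber integral, because the constant factor $\det(BB^*)^{1/2}$ disappears after multiplication by $-4t$.
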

	
	\begin{proof}
		The lifts of $(0,Z)$ under \eqref{eq:sp-quotient-map} are precisely the $(0,\Omega)$ with $B\Omega=Z$. Hence \eqref{eq:sp-submetry}, \Cref{thm:sp-free-distance}, and \eqref{eq:sp-F} give
		\[
		d_0(e,(0,Z))^2
		=
		\min_{B\Omega=Z}
		d_{\mathrm{free}}(e,(0,\Omega))^2
		=
		\mathfrak F_{G,K}(Z).
		\]
		The group $\mathbb N_{G,K}$ is complete, the Lebesgue measure in exponential coordinates is Haar measure, and its left-invariant horizontal fields have zero divergence \cite{AgrachevBarilariBoscain,AgrachevBoscainGauthierRossi2009}. The logarithmic formula \cite[Theorem~17]{BarilariBoscainNeel2012}, followed by \eqref{eq:sp-fiber-heat}, give the heat kernel and fiber rates. Multiplication by $-4t$ removes the fixed coarea factor in the logarithmic limit.
	\end{proof}
	
	For a based path $x\in H^1([0,1];\pLie)$ with $x(0)=x(1)=0$, set
	\begin{equation*}
		\mathcal A_{G,K}(x)
		:=
		\frac12\int_0^1[x(s),\dot x(s)]\,\mathrm ds,
		\qquad
		\mathcal E(x):=\int_0^1|\dot x(s)|^2\,\mathrm ds.
	\end{equation*}
	
	\begin{corollary}\label{cor:sp-loop-isoperimetry}
		For every $Z\in\kLie$,
		\begin{equation}\label{eq:sp-loop-isoperimetry}
			\inf\left\{
			\mathcal E(x):
			x(0)=x(1)=0,\ \mathcal A_{G,K}(x)=Z
			\right\}
			=
			\mathfrak F_{G,K}(Z).
		\end{equation}
		The infimum is attained by a trigonometric polynomial using at most $\kappa$ positive frequencies.
	\end{corollary}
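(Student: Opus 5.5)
The plan is to reduce \eqref{eq:sp-loop-isoperimetry} to \Cref{thm:sp-tangent-distance} through the standard correspondence between horizontal curves in $\mathbb N_{G,K}$ and their first-layer projections.

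\textbf{Curves and loops.} Let $u\in L^2([0,1];\pLie)$ be a control. The curve $\gamma(s)=(x(s),z(s))$ in $\mathbb N_{G,K}$ with $\gamma^{-1}\dot\gamma=u$ and $\gamma(0)=e$ has $\dot x=u$. Differentiating the product law gives
\[
\dot z=\tfrac12[x,\dot x].
\]
Hence $\gamma(1)=(x(1),\mathcal A_{G,K}(x))$. Horizontal curves from $e$ to $(0,Z)$ therefore correspond bijectively to based loops $x\in H^1$ with $x(0)=x(1)=0$ and $\mathcal A_{G,K}(x)=Z$. The length of such a curve is $\int_0^1|\dot x|$.

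\textbf{Energy versus squared distance.} Cauchy--Schwarz gives $\bigl(\int_0^1|\dot x|\bigr)^2\le\mathcal E(x)$, with equality after constant-speed reparametrization. Reparametrization preserves the endpoint and the area functional, since $\mathcal A_{G,K}$ is invariant under monotone reparametrization. So the infimum of $\mathcal E$ over admissible loops equals $d_0(e,(0,Z))^2$. By \Cref{thm:sp-tangent-distance}, this is $\mathfrak F_{G,K}(Z)$.

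\textbf{Attainment and the form of the minimizer.} I would not use abstract existence arguments here. Instead I would write the minimizer down. Take $\Omega^*$ attaining \eqref{eq:sp-F}, written in the normal form \eqref{eq:sp-skew-normal} with orthonormal $u_j,v_j$ and coefficients $\sigma_j$. Define
\[
x(s)=\sum_{j=1}^{\kappa}r_j\bigl[(\cos(2\pi js)-1)u_j+\sin(2\pi js)\,v_j\bigr],
\]
with $r_j\ge0$ to be chosen.

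\textbf{Computing area and energy.} Since the frequencies are distinct, orthogonality kills every cross term with $j\ne k$ in both the energy and the free area $\tfrac12\int x\wedge\dot x$. For the free area, the diagonal terms give $\tfrac12\int_0^1 x\wedge\dot x=\sum_j\pi j r_j^2\,u_j\wedge v_j$. Applying $B$ turns this into $\mathcal A_{G,K}(x)=B\bigl(\sum_j\pi jr_j^2u_j\wedge v_j\bigr)$. The energy is $\mathcal E(x)=\sum_j4\pi^2j^2r_j^2$.

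\textbf{Choosing the radii.} Set $\pi j r_j^2=\sigma_j$. Then $\mathcal A_{G,K}(x)=B\Omega^*=Z$ and
\[
\mathcal E(x)=\sum_j4\pi j\sigma_j=\Psi_{\pLie}(\Omega^*)=\mathfrak F_{G,K}(Z).
\]
Thus the infimum is attained by a trigonometric polynomial with frequencies among $1,\dots,\kappa$. Frequencies with $\sigma_j=0$ drop out, so at most $\kappa$ positive frequencies are used.

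\textbf{Main obstacle.} The delicate step is the sign and normalization bookkeeping in the area computation. The constant in $\tfrac12\int x\wedge\dot x$, the orientation of $u_j\wedge v_j$, and the factor $4\pi$ in $\Psi_{\pLie}$ must all match, so that the energy of the explicit loop equals $\Psi_{\pLie}(\Omega^*)$ exactly and not a multiple of it. The constant term $-u_j$ contributes $-\tfrac12 u_j\wedge\int\dot x=0$, so it does not disturb the area. I would double-check the remaining bookkeeping with the case $\kappa=1$, where the result reduces to the classical Heisenberg isoperimetric identity $\mathcal E=4\pi\cdot\text{area}$.
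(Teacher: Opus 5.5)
Your proposal is correct and follows the paper's proof. You identify admissible based loops with horizontal curves in $\mathbb N_{G,K}$ ending at $(0,Z)$ and invoke \Cref{thm:sp-tangent-distance}. You then exhibit the same explicit loop, with radii $\sqrt{\sigma_j/(\pi j)}$ built from a minimizer of \eqref{eq:sp-F}; your Cauchy--Schwarz and constant-speed reparametrization step only makes explicit the energy--length identification that the paper leaves implicit.
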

	
	\begin{proof}
		A horizontal curve in $\mathbb N_{G,K}$ ending at $(0,Z)$ has a based first-layer path $x$, central endpoint $\mathcal A_{G,K}(x)$, and energy $\mathcal E(x)$. Apply \Cref{thm:sp-tangent-distance}.
		
		For an explicit minimizer, choose $\Omega_Z\in B^{-1}(Z)$ minimizing \eqref{eq:sp-F} and write $\Omega_Z=\sum_j\sigma_j u_j\wedge v_j$ as in \eqref{eq:sp-skew-normal}. Then
		\begin{equation}\label{eq:sp-minimizing-loop}
			x_Z(s)
			:=
			\sum_{\sigma_j>0}
			\sqrt{\frac{\sigma_j}{\pi j}}
			\left(
			(\cos(2\pi js)-1)u_j+\sin(2\pi js)v_j
			\right)
		\end{equation}
		satisfies $x_Z(0)=x_Z(1)=0$ and
		\begin{equation*}
			\frac12\int_0^1x_Z\wedge\dot x_Z\,\mathrm ds=\Omega_Z,
			\qquad
			\mathcal E(x_Z)
			=4\pi\sum_j j\,\sigma_j
			=\mathfrak F_{G,K}(Z).
		\end{equation*}
		Applying $B$ gives the required central endpoint.
	\end{proof}
	
	It remains to compare the tangent horizontal loop with its endpoint in the compact symmetric pair. The passage from the tangent group back to $G$ uses the parity supplied by the symmetric decomposition. Odd Lie polynomials in horizontal variables belong to $\pLie$, while even ones belong to $\kLie$.
	
	\begin{lemma}
		\label{lem:sp-second-order-log}
		For every $M>0$ there are $\eta_M,C_M>0$ with the following property. Let $v\in L^2([0,1];\pLie)$ satisfy $\|v\|_{L^2}\le M$, and let $\gamma_\eta:[0,1]\to G$ be the absolutely continuous curve solving
		\[
		\gamma_\eta^{-1}\dot\gamma_\eta=\eta v,
		\qquad
		\gamma_\eta(0)=e.
		\]
		If $|\eta|\le\eta_M$, then $\gamma_\eta(1)$ lies in a fixed exponential neighborhood of $e$, and
		\begin{equation}\label{eq:sp-log-expansion}
			\log\gamma_\eta(1)
			=\eta X_v+\eta^2A_v+R_\eta(v),
		\end{equation}
		where
		\begin{equation*}
			X_v:=\int_0^1v(s)\,\mathrm ds,
			\qquad
			A_v:=\frac12\int_0^1[x_v(s),v(s)]\,\mathrm ds,
			\qquad
			x_v(s):=\int_0^s v(u)\,\mathrm du,
		\end{equation*}
		and
		\begin{equation}\label{eq:sp-parity-remainder}
			\left|\pr_{\pLie}R_\eta(v)\right|\le C_M|\eta|^3,
			\qquad
			\left|\pr_{\kLie}R_\eta(v)\right|\le C_M|\eta|^4.
		\end{equation}
	\end{lemma}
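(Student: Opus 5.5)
We plan to use the Magnus (Chen--Strichartz) expansion of the logarithm of the endpoint, combined with a parity argument coming from the involution $\theta$. Fix a matrix norm on $\gLie$ and a ball $U\subset\gLie$ on which $\exp$ is a diffeomorphism onto its image. The first step is a Grönwall-type bound. Writing $\gamma_\eta(s)$ as the solution of a linear ODE in the faithful unitary representation, we have $|\gamma_\eta(s)-e|\le e^{|\eta|\|v\|_{L^1}}-1\le e^{|\eta|M}-1$. For $|\eta|\le\eta_M$ this places the whole curve inside $\exp(U)$, so $\log\gamma_\eta(1)$ is well defined. We then define $\Xi(\eta):=\log\gamma_\eta(1)$.

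The second step is to show that $\Xi$ is real analytic in $\eta$, with coefficients bounded uniformly in $v$. The Magnus series gives
\[
\Xi(\eta)=\sum_{n\ge1}\eta^n\Xi_n(v),
\]
where $\Xi_n(v)$ is an $n$-fold iterated integral of nested brackets of $v$. The standard estimate is $|\Xi_n(v)|\le (c\|v\|_{L^1})^n$, so the series converges absolutely when $c|\eta|M<1$. A direct computation of the first two terms gives $\Xi_1=X_v$ and
\[
\Xi_2=\tfrac12\int_0^1\!\int_0^s[v(u),v(s)]\,\mathrm du\,\mathrm ds=A_v .
\]
An alternative route avoids citing Magnus convergence explicitly. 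Since $v\mapsto\gamma_\eta(1)$ is holomorphic in complex $\eta$ with $|\eta|\le 2\eta_M$, we can instead use Cauchy estimates for the composition with $\log$ to bound the Taylor remainder. In either form, the remainder $R_\eta(v)=\sum_{n\ge3}\eta^n\Xi_n(v)$ satisfies $|R_\eta|\le C_M|\eta|^3$, uniformly over $\|v\|_{L^2}\le M$, because $\|v\|_{L^1}\le\|v\|_{L^2}$.

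The third step is the parity splitting. Each $\Xi_n(v)$ is an integral of Lie monomials of degree $n$ in values of $v$, which lie in $\pLie$. By \eqref{eq:sp-brackets}, such a monomial lies in $\pLie$ when $n$ is odd and in $\kLie$ when $n$ is even. We can argue this cleanly with $\theta$: the curve driven by $-v$ is $\theta(\gamma_\eta)$, so $\Xi(-\eta)=d\theta\,\Xi(\eta)$. It follows that $\pr_{\kLie}\Xi$ is even in $\eta$ and $\pr_{\pLie}\Xi$ is odd. Hence $\pr_{\kLie}R_\eta=\sum_{n\ge4,\ n\text{ even}}\eta^n\pr_{\kLie}\Xi_n$, and the geometric bound yields $|\pr_{\kLie}R_\eta|\le C_M|\eta|^4$. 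Likewise $\pr_{\pLie}R_\eta$ collects only the odd terms with $n\ge3$, which gives the $|\eta|^3$ bound.

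The main obstacle is the uniform convergence of the Magnus series, i.e. the bound $|\Xi_n|\le(c\|v\|_{L^1})^n$ with an absolute constant $c$, for merely $L^2$ controls. We would first try the holomorphic Cauchy-estimate route, since it only needs the Grönwall bound for complex $\eta$ together with analyticity of $\log$ on $\exp(U_\C)$. The coefficient identification and parity then follow by differentiating at $\eta=0$.
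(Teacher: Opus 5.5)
Your proposal is correct and follows essentially the paper's proof: complexify the parameter, bound the Dyson series by $e^{|z|M}-1$, and apply Cauchy estimates to the principal matrix logarithm to get geometric coefficient bounds uniform in $\|v\|_{L^2}\le M$; then identify the first two Magnus coefficients as $X_v$ and $A_v$, and sum the odd terms from degree three and the even terms from degree four. The only variation is your parity argument via $\gamma_{-\eta}=\theta\circ\gamma_\eta$, i.e.\ $\Xi(-\eta)=\mathrm d\theta\,\Xi(\eta)$, which replaces the paper's appeal to the bracket relations on homogeneous Lie polynomials and does not require knowing that the Taylor coefficients are Lie polynomials; for it to work, take the exponential neighborhood to be a ball for the $\theta$-invariant metric, or shrink $\eta$ so that both logarithms lie where $\exp$ is injective.
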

	
	\begin{proof}
		Embed $G$ faithfully in a unitary group $\U(N)$. For a complex parameter $z$, let $U_z:[0,1]\to\operatorname{GL}(N,\C)$ solve
		\[
		\dot U_z=U_z zv,
		\qquad U_z(0)=I.
		\]
		For real $z=\eta$, $U_\eta$ is the matrix realization of $\gamma_\eta$; thus $U_z$ is its complex-analytic extension in the parameter. The Dyson series is entire in $z$ and satisfies
		\[
		\|U_z(1)-I\|\le e^{|z|\|v\|_{L^1}}-1
		\le e^{|z|M}-1.
		\]
		Choose $r_M>0$ so that the right-hand side is smaller than $1/2$ for $|z|\le r_M$. The principal matrix logarithm is then holomorphic on this disc and uniformly bounded there. Hence
		\[
		\log U_z(1)=\sum_{r\ge1}z^r\mathcal M_r(v),
		\]
		and Cauchy's estimate gives
		\begin{equation}\label{eq:sp-magnus-coefficient-bound}
			|\mathcal M_r(v)|\le C_M r_M^{-r},
			\qquad r\ge1,
		\end{equation}
		uniformly for $\|v\|_{L^2}\le M$. The first two coefficients are the standard Magnus terms
		\[
		\mathcal M_1(v)=X_v,
		\qquad
		\mathcal M_2(v)=\frac12\int_0^1\int_0^s[v(u),v(s)]\,\mathrm du\,\mathrm ds
		=A_v.
		\]
		By \eqref{eq:sp-brackets}, every homogeneous Lie polynomial of odd degree in horizontal variables lies in $\pLie$, while every one of even degree lies in $\kLie$. Taking $\eta_M<r_M/2$ and summing \eqref{eq:sp-magnus-coefficient-bound} over odd degrees from three and even degrees from four proves \eqref{eq:sp-parity-remainder}.
	\end{proof}
	
	Fix an exponential neighborhood $e\in U\subset G$ and let $\log:U\to\gLie$ be the local inverse of the exponential map. For $g\in U$, write $\log g=P+W$ with $P\in\pLie$ and $W\in\kLie$. We assign weights one and two to $P$ and $W$, respectively.
	
	\begin{lemma}
		\label{lem:sp-endpoint-correction}
		Fix $R>0$. Suppose that there is $\varepsilon_0>0$ such that $g_\varepsilon\in U$ for $0<\varepsilon<\varepsilon_0$ and that
		\[
		\log g_\varepsilon
		=\varepsilon Z+P_\varepsilon+W_\varepsilon,
		\qquad
		Z,W_\varepsilon\in\kLie,\quad P_\varepsilon\in\pLie,
		\]
		where $|Z|\le R$, $|P_\varepsilon|\le C_R\varepsilon^{3/2}$, and $|W_\varepsilon|\le C_R\varepsilon^2$. Then, for small $\varepsilon$,
		\[
		h_\varepsilon
		:=\exp(-\varepsilon Z)g_\varepsilon\exp(-P_\varepsilon)
		\]
		satisfies
		\[
		|\pr_{\pLie}\log h_\varepsilon|=O_R(\varepsilon^{5/2}),
		\qquad
		|\pr_{\kLie}\log h_\varepsilon|=O_R(\varepsilon^2).
		\]
		Consequently
		\begin{equation}\label{eq:sp-residual-distance}
			d_{\sr}(e,h_\varepsilon)=O_R(\varepsilon).
		\end{equation}
	\end{lemma}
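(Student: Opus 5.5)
The plan is to compute $\log h_\varepsilon$ by two applications of the Baker--Campbell--Hausdorff formula, track parity using \eqref{eq:sp-brackets}, and then bound the residual distance by a ball-box type estimate near $e$. Since $|Z|\le R$ and the perturbations are $O_R(\varepsilon^{3/2})$, all three factors $\exp(-\varepsilon Z)$, $g_\varepsilon$, $\exp(-P_\varepsilon)$ lie in a fixed small neighbourhood of $e$ for small $\varepsilon$. On that neighbourhood the BCH series converges absolutely, with coefficients bounded uniformly in $R$.

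\textbf{First product.} Write $g_\varepsilon=\exp(\varepsilon Z+W_\varepsilon+P_\varepsilon)$ and set $a=-\varepsilon Z$ and $b=\varepsilon Z+W_\varepsilon+P_\varepsilon$. Then $a+b=W_\varepsilon+P_\varepsilon$. The BCH series for $\log(e^ae^b)$ has a linear part $a+b$; every higher term is a bracket polynomial containing $[a,b]=-\varepsilon[Z,W_\varepsilon+P_\varepsilon]$, since $[a,a]=0$ and $b=-a+(W_\varepsilon+P_\varepsilon)$. So
\[
\log(e^ae^b)=W_\varepsilon+P_\varepsilon+O(\varepsilon\,|W_\varepsilon+P_\varepsilon|).
\]
Parity refines this. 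Brackets involving only $\kLie$-elements ($Z$ and $W_\varepsilon$) stay in $\kLie$. Terms linear in $P_\varepsilon$ with $\kLie$-elements lie in $\pLie$, because $[\kLie,\pLie]\subset\pLie$. Terms quadratic in $P_\varepsilon$ lie in $\kLie$ and are $O(\varepsilon^3)$. Hence
\[
\log(\exp(-\varepsilon Z)g_\varepsilon)=P'_\varepsilon+W'_\varepsilon,
\qquad P'_\varepsilon=P_\varepsilon+O_R(\varepsilon\cdot\varepsilon^{3/2}),
\qquad W'_\varepsilon=O_R(\varepsilon^2).
\]

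\textbf{Second product.} Multiply by $\exp(-P_\varepsilon)$. The linear part becomes $P'_\varepsilon-P_\varepsilon+W'_\varepsilon$. The $\pLie$-component of this is $O_R(\varepsilon^{5/2})$ and the $\kLie$-component is $O_R(\varepsilon^2)$. For the bracket corrections:
\begin{itemize}
\item $[P'_\varepsilon,-P_\varepsilon]$ lies in $\kLie$ and equals $[P'_\varepsilon-P_\varepsilon,-P_\varepsilon]=O(\varepsilon^{4})$.
\item $[W'_\varepsilon,P_\varepsilon]$ lies in $\pLie$ and is $O(\varepsilon^{7/2})$.
\item All higher brackets are smaller still.
\end{itemize}
This gives $|\pr_{\pLie}\log h_\varepsilon|=O_R(\varepsilon^{5/2})$ and $|\pr_{\kLie}\log h_\varepsilon|=O_R(\varepsilon^2)$. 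The only care needed is to check that every BCH term is genuinely bounded by its lowest-order factor; this follows from absolute convergence on a fixed neighbourhood.

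\textbf{Distance estimate.} To prove \eqref{eq:sp-residual-distance}, use the standard local ball-box estimate near $e$ for the bracket-generating, two-step distribution. This says $d_{\sr}(e,\exp(P+W))\le C(|P|+|W|^{1/2})$ on a fixed neighbourhood. I would prove it directly from \Cref{prop:distance-general}-type constructions. The horizontal segment $\exp(P)$ has length $|P|$. By surjectivity of the bracket \eqref{eq:sp-bracket-surjective}, a vertical element $W$ is reached up to higher-order error by the commutator loop $e^{sX}e^{sY}e^{-sX}e^{-sY}$ (with $s\sim|W|^{1/2}$), or more cleanly as a composition of finitely many horizontal exponentials. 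The map $(t_1,\dots,t_n)\mapsto e^{t_1X_{i_1}}\cdots e^{t_nX_{i_n}}$ is open at a suitable point by the Chow--Rashevskii argument; the quantitative version is that its weighted image contains a box of size $\delta\times\delta^2$ for small $\delta$. Applying this with $\delta\sim\varepsilon$ gives $d_{\sr}(e,h_\varepsilon)\le C(\varepsilon^{5/2}+\varepsilon)=O_R(\varepsilon)$.

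\textbf{Main obstacle.} I expect this uniform ball-box step to be the hardest part. The BCH bookkeeping is routine, but here one needs a uniform local estimate on $G$ itself rather than on the tangent group. If pressed, I would cite the ball-box theorem from \cite{AgrachevBarilariBoscain}, which holds at the identity with exponential coordinates of the first kind adapted to $\pLie\oplus\kLie$.
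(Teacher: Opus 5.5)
Your proposal is correct and follows the paper's proof: two Baker--Campbell--Hausdorff multiplications tracked with the parity relations \eqref{eq:sp-brackets}, followed by the local ball-box estimate $d_{\sr}(e,\exp(P+W))\le C(|P|+|W|^{1/2})$ from \cite[Theorem~10.67]{AgrachevBarilariBoscain}. The only difference is bookkeeping: the paper isolates the term $-\tfrac12[\varepsilon Z,P_\varepsilon]$ explicitly, whereas you absorb it into the bound $P'_\varepsilon-P_\varepsilon=O_R(\varepsilon^{5/2})$; and citing the ball-box theorem, which you offer as a fallback, is exactly what the paper does, so your Chow--Rashevskii sketch is not needed.
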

	
	\begin{proof}
		Put $U=\varepsilon Z$, $P=P_\varepsilon$, and $W=W_\varepsilon$. The Baker-Campbell-Hausdorff (BCH) series converges uniformly in a fixed neighborhood of the origin. The first multiplication gives
		\begin{equation*}
			\log\!\left(e^{-U}e^{U+P+W}\right)
			=P+W-\frac12[U,P]+R_1,
		\end{equation*}
		where the bracket relations \eqref{eq:sp-brackets} and the assumed sizes imply
		\[
		|\pr_{\pLie}R_1|=O_R(\varepsilon^{7/2}),
		\qquad
		|\pr_{\kLie}R_1|=O_R(\varepsilon^3).
		\]
		Indeed, the omitted horizontal terms contain either two copies of $U$ and one of $P$, or one copy each of $P$ and $W$; the omitted vertical terms start with $[U,W]$.
		
		Applying BCH once more to the right multiplication by $e^{-P}$ gives
		\begin{equation*}
			\log h_\varepsilon
			=W-\frac12[U,P]+R_2,
		\end{equation*}
		with
		\[
		|\pr_{\pLie}R_2|=O_R(\varepsilon^{7/2}),
		\qquad
		|\pr_{\kLie}R_2|=O_R(\varepsilon^3).
		\]
		Thus the horizontal and vertical components of $\log h_\varepsilon$ are $O_R(\varepsilon^{5/2})$ and $O_R(\varepsilon^2)$, respectively. The local ball-box estimate in exponential coordinates adapted to the weight-one space $\pLie$ and weight-two space $\kLie$ gives
		\[
		d_{\sr}(e,h_\varepsilon)
		\le C_R\left(
		|\pr_{\pLie}\log h_\varepsilon|
		+|\pr_{\kLie}\log h_\varepsilon|^{1/2}
		\right),
		\]
		which proves \eqref{eq:sp-residual-distance}; see \cite[Theorem~10.67]{AgrachevBarilariBoscain}.
	\end{proof}
	
	\begin{theorem}
		\label{thm:sp-compact-vertical}
		For every $R>0$ there are $\varepsilon_R,C_R>0$ such that, whenever $\mathfrak F_{G,K}(Z)\le R$ and $0<\varepsilon\le\varepsilon_R$,
		\begin{equation}\label{eq:sp-compact-vertical}
			\left|
			d_{\sr}(e,\exp(\varepsilon Z))^2
			-\varepsilon\mathfrak F_{G,K}(Z)
			\right|
			\le C_R\varepsilon^{3/2}.
		\end{equation}
		The lower estimate has the stronger remainder
		\begin{equation}\label{eq:sp-compact-lower}
			d_{\sr}(e,\exp(\varepsilon Z))^2
			\ge
			\varepsilon\mathfrak F_{G,K}(Z)-C_R\varepsilon^2.
		\end{equation}
	\end{theorem}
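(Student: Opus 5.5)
Both inequalities rest on one comparison: a horizontal control on $[0,1]$, scaled by $\eta=\sqrt\varepsilon$, has a compact endpoint in $G$ that agrees with its tangent-group endpoint up to the parity-controlled error of \Cref{lem:sp-second-order-log}. Throughout, \Cref{prop:sp-regularity} gives $|Z|\le c^{-1}R$, so the constants below depend only on $R$.

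\emph{Upper bound.} Let $x_Z$ be the minimizing loop \eqref{eq:sp-minimizing-loop} from \Cref{cor:sp-loop-isoperimetry}. Put $v=\dot x_Z$, so that $X_v=0$, $A_v=Z$ and $\|v\|_{L^2}^2=\mathfrak F_{G,K}(Z)\le R$. Apply \Cref{lem:sp-second-order-log} with $\eta=\sqrt\varepsilon$. This gives $\log\gamma_\eta(1)=\varepsilon Z+P_\varepsilon+W_\varepsilon$ with $|P_\varepsilon|\le C\varepsilon^{3/2}$ and $|W_\varepsilon|\le C\varepsilon^2$. \Cref{lem:sp-endpoint-correction} then gives
\[
\exp(\varepsilon Z)=\gamma_\eta(1)\exp(-P_\varepsilon)h_\varepsilon^{-1}.
\]
By the triangle inequality and left invariance,
\[
d_{\sr}(e,\exp(\varepsilon Z))\le \sqrt{\varepsilon\mathfrak F_{G,K}(Z)}+|P_\varepsilon|+d_{\sr}(e,h_\varepsilon)\le\sqrt{\varepsilon\mathfrak F_{G,K}(Z)}+C\varepsilon.
\]
The middle term is bounded because $\exp(-P_\varepsilon)$ is a horizontal segment of length $|P_\varepsilon|$. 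Squaring, the cross term is $O(\sqrt\varepsilon\cdot\varepsilon)$, which yields the upper half of \eqref{eq:sp-compact-vertical} with error $\varepsilon^{3/2}$.

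\emph{Lower bound.} By \Cref{prop:distance-general} a minimizer exists. Write it as $\gamma(t)=\exp(t(A+B))\exp(-tA)$, with control $u(t)=\Ad_{e^{tA}}B$ and $|B|^2=d^2:=d_{\sr}(e,\exp(\varepsilon Z))^2$. By the upper bound, $d^2\le C\varepsilon$. Set $v=u/\sqrt\varepsilon$, so that $\|v\|_{L^2}\le M(R)$ and \Cref{lem:sp-second-order-log} applies with $\eta=\sqrt\varepsilon$. Compare the $\pLie$ and $\kLie$ parts of $\log\exp(\varepsilon Z)=\varepsilon Z$:
\[
\sqrt\varepsilon X_v=O(\varepsilon^{3/2}),\qquad \varepsilon A_v=\varepsilon Z+O(\varepsilon^2).
\]
Hence $|X_v|=O(\varepsilon)$ and $A_v=Z+O(\varepsilon)$. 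Close the path to a loop: let $x=x_v-sX_v$. Its energy is
\[
\mathcal E(x)=\|v\|^2-|X_v|^2\le d^2/\varepsilon,
\]
and its area is
\[
\mathcal A_{G,K}(x)=A_v-\tfrac12\int_0^1[x_v,X_v]\,\mathrm ds+\tfrac12\int_0^1 s[X_v,v]\,\mathrm ds=Z+O(\varepsilon).
\]
The correction terms are $O(|X_v|\,\|v\|)=O(\varepsilon)$. \Cref{cor:sp-loop-isoperimetry} together with the Lipschitz bound of \Cref{prop:sp-regularity} then gives
\[
d^2/\varepsilon\ge\mathfrak F_{G,K}(Z+O(\varepsilon))\ge\mathfrak F_{G,K}(Z)-C\varepsilon,
\]
which is \eqref{eq:sp-compact-lower}. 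This is stronger than the lower half of \eqref{eq:sp-compact-vertical}.

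\emph{Main obstacle.} The delicate step is the lower bound. There we must make sure the logarithm of the endpoint is taken in the fixed exponential chart, so that $\log\gamma(1)=\varepsilon Z$ holds and not $\varepsilon Z$ shifted by a lattice or branch element. This follows from the smallness $\|U_z(1)-I\|<1/2$ in \Cref{lem:sp-second-order-log} and from the fact that $\exp(\varepsilon Z)$ lies in that neighborhood for small $\varepsilon$, so $\varepsilon_R$ must be chosen accordingly. The lower bound is sharper than the upper one because the parity estimate \eqref{eq:sp-parity-remainder} gives an $\varepsilon^2$ vertical remainder. The upper bound instead pays $\varepsilon^{3/2}$ through the horizontal correction $P_\varepsilon$ and the residual $h_\varepsilon$.
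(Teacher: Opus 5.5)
Your proposal is correct and follows the paper's proof essentially step for step. The upper bound uses the same $\sqrt\varepsilon$-scaled minimizing loop, the parity expansion of \Cref{lem:sp-second-order-log}, and the endpoint correction of \Cref{lem:sp-endpoint-correction}; the lower bound uses the same loop closing followed by isoperimetry and the Lipschitz bound.

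The only differences are cosmetic. You use an exact normal minimizer from \Cref{prop:distance-general} where the paper uses a constant-speed $\varepsilon^2$-almost minimizer. Also, the last correction term in your area formula should read $-\tfrac12\int_0^1 s[X_v,v]\,\mathrm ds$; combined with the other term, the correction is the paper's $\tfrac12\int_0^1(1-2s)[X_v,v]\,\mathrm ds$. This sign slip does not affect the $O(\varepsilon)$ bound.
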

	
	\begin{proof}
		By \Cref{prop:sp-regularity}, the sublevel $\{\mathfrak F_{G,K}\le R\}$ is compact. Choose a minimizer $\Omega_Z$ in \eqref{eq:sp-F}. By \eqref{eq:sp-coercive},
		\[
		|\Omega_Z|\le\frac{\Psi_{\pLie}(\Omega_Z)}{4\pi}
		=\frac{\mathfrak F_{G,K}(Z)}{4\pi}\le\frac{R}{4\pi}.
		\]
		The loop $x_Z$ in \eqref{eq:sp-minimizing-loop} has energy at most $R$. The constants below are therefore uniform on this sublevel.
		
		For the upper bound, define the horizontal control
		\[
		v_\varepsilon(s):=\sqrt\varepsilon\,\dot x_Z(s)\in\pLie,
		\]
		and let $\gamma_\varepsilon:[0,1]\to G$ solve $\gamma_\varepsilon^{-1}\dot\gamma_\varepsilon=v_\varepsilon$ with $\gamma_\varepsilon(0)=e$. Set $g_\varepsilon:=\gamma_\varepsilon(1)$. Since $x_Z$ is based, the quantities in \Cref{lem:sp-second-order-log} satisfy $X_{\dot x_Z}=0$ and $A_{\dot x_Z}=Z$. Hence that lemma gives
		\[
		\log g_\varepsilon
		=\varepsilon Z+P_\varepsilon+L_\varepsilon,
		\qquad
		|P_\varepsilon|\le C_R\varepsilon^{3/2},
		\quad
		|L_\varepsilon|\le C_R\varepsilon^2.
		\]
		Follow this curve by the horizontal segment $s\mapsto g_\varepsilon\exp(-sP_\varepsilon)$, whose length is $O_R(\varepsilon^{3/2})$ and whose endpoint is
		\[
		g_\varepsilon\exp(-P_\varepsilon)
		=\exp(\varepsilon Z)h_\varepsilon.
		\]
		By \Cref{lem:sp-endpoint-correction}, a horizontal curve from $e$ to $h_\varepsilon$ has length $O_R(\varepsilon)$. Left-translating its reversal corrects the last endpoint to $\exp(\varepsilon Z)$. The resulting curve has length at most
		\[
		\sqrt{\varepsilon\mathfrak F_{G,K}(Z)}+C_R\varepsilon.
		\]
		Squaring yields
		\begin{equation}\label{eq:sp-compact-upper}
			d_{\sr}(e,\exp(\varepsilon Z))^2
			\le
			\varepsilon\mathfrak F_{G,K}(Z)+C_R\varepsilon^{3/2}.
		\end{equation}
		
		For the lower bound, \eqref{eq:sp-compact-upper} first gives $d_{\sr}(e,\exp(\varepsilon Z))^2\le C_R\varepsilon$. Choose a horizontal curve $\gamma_\varepsilon$ from $e$ to $\exp(\varepsilon Z)$ whose squared length is at most $d_{\sr}(e,\exp(\varepsilon Z))^2+\varepsilon^2$. Reparametrize this curve at constant speed on $[0,1]$; its energy then equals its squared length. Write its control as
		\[
		\gamma_\varepsilon^{-1}\dot\gamma_\varepsilon
		=\sqrt\varepsilon\,v_\varepsilon,
		\qquad
		E_\varepsilon:=\int_0^1|v_\varepsilon|^2\,\mathrm ds.
		\]
		Then $E_\varepsilon\le C_R$. With $x_\varepsilon(s)=\int_0^s v_\varepsilon(u)\,\mathrm du$ and $X_\varepsilon=x_\varepsilon(1)$, projection of \eqref{eq:sp-log-expansion} onto $\pLie$ and $\kLie$ gives
		\begin{equation*}
			|X_\varepsilon|=O_R(\varepsilon),
			\qquad
			\mathcal A_{G,K}(x_\varepsilon)=Z+O_R(\varepsilon).
		\end{equation*}
		Close the first-layer path by subtracting its linear endpoint segment:
		\[
		\widetilde x_\varepsilon(s)
		:=x_\varepsilon(s)-sX_\varepsilon.
		\]
		Then
		\begin{equation*}
			\int_0^1|\dot{\widetilde x}_\varepsilon|^2\,\mathrm ds
			=E_\varepsilon-|X_\varepsilon|^2\le E_\varepsilon.
		\end{equation*}
		The change of area is given exactly by
		\begin{equation*}
			\mathcal A_{G,K}(\widetilde x_\varepsilon)
			-\mathcal A_{G,K}(x_\varepsilon)
			=\frac12\int_0^1(1-2s)[X_\varepsilon,v_\varepsilon(s)]\,\mathrm ds.
		\end{equation*}
		Since $|X_\varepsilon|=O_R(\varepsilon)$ and $\|v_\varepsilon\|_{L^2}=O_R(1)$, Cauchy-Schwarz and the continuity of the bracket give
		\begin{equation*}
			\mathcal A_{G,K}(\widetilde x_\varepsilon)
			=Z+O_R(\varepsilon).
		\end{equation*}
		Using \Cref{cor:sp-loop-isoperimetry,prop:sp-regularity},
		\[
		\mathfrak F_{G,K}(Z)
		\le
		\mathfrak F_{G,K}
		\bigl(\mathcal A_{G,K}(\widetilde x_\varepsilon)\bigr)
		+C_R\varepsilon
		\le E_\varepsilon+C_R\varepsilon.
		\]
		Multiplication by $\varepsilon$, the energy-length identity after constant-speed reparametrization, and the chosen almost minimality prove \eqref{eq:sp-compact-lower}. Combining \eqref{eq:sp-compact-lower} and \eqref{eq:sp-compact-upper} proves \eqref{eq:sp-compact-vertical}.
	\end{proof}
	
	The finite formulation \eqref{eq:sp-finite-objective}, the enclosure \eqref{eq:sp-grid-enclosure}, and \Cref{thm:sp-compact-vertical} prove the general assertion of \Cref{thm:intro-vertical}; its explicit one-column AIII formula is supplied by \Cref{thm:examples-AIII}.
	
	The heat kernel consequences of the compact comparison are recorded next. The logarithmic formula \eqref{eq:leandre-logarithmic} and \Cref{thm:sp-compact-vertical} give, uniformly for $\mathfrak F_{G,K}(Z)\le R$,
	\begin{equation}\label{eq:sp-compact-heat}
		-\lim_{t\downarrow0}4t\log p_t(\exp(\varepsilon Z))
		=
		\varepsilon\mathfrak F_{G,K}(Z)+O_R(\varepsilon^{3/2}),
		\qquad \varepsilon\downarrow0.
	\end{equation}
	The limits are iterated: first $t\downarrow0$ at fixed $\varepsilon$, then $\varepsilon\downarrow0$.
	
	The tangent kernel also gives the leading parabolic rescaling. The pointwise nilpotentization theorem of Colin de Verdi\`ere, Hillairet, and Tr\'elat \cite[Theorem~A]{ColinHillairetTrelat2021} gives
	\begin{equation*}
		\varepsilon^{Q/2}
		p_{\varepsilon t}(\exp(\varepsilon Z))
		\longrightarrow
		\vol(G)\,p_t^0(0,Z),
		\qquad t>0,
	\end{equation*}
	locally uniformly in $Z$, with $p_t^0$ normalized relative to Euclidean measure on $\pLie\oplus\kLie$. This fixed-time nilpotentization complements the iterated logarithmic limit in \eqref{eq:sp-compact-heat}.
	
	Substitution in the integral formula \Cref{thm:integral-formula} gives the equivalent identity
	\begin{align*}
		&-\lim_{t\downarrow0}4t\log\Biggl[
		e^{-|\rho_K|^2t}
		\int_{\kLie}q_t^{\C}
		\bigl(\exp(\varepsilon Z)e^{iY}\bigr)
		j_K^{\nc}(Y)^{1/2}
		\frac{e^{-|Y|^2/(4t)}}{(4\pi t)^{r/2}}\,\mathrm dY
		\Biggr]\\
		&\hspace{12em}
		=
		\varepsilon\mathfrak F_{G,K}(Z)+O_R(\varepsilon^{3/2})\quad  (\varepsilon \downarrow 0).
	\end{align*}
	
	\begin{remark}
		The free formula, central quotient, tangent heat kernel, and loop isoperimetry extend to an arbitrary two-step compact pair after replacing \eqref{eq:sp-B} by
		\[
		B(X\wedge Y)=\pr_{\lLie}[X,Y],
		\qquad B:\Lambda^2\pLie\to\lLie.
		\]
		The symmetric pair relations supply the odd-even separation in \Cref{lem:sp-second-order-log}, which yields the $O_R(\varepsilon^{3/2})$ compact remainder in \eqref{eq:sp-compact-vertical}.
	\end{remark}
	\section{Spectral zeta functions and regularized determinants}
	\label{zeta:section}
	
	Assume throughout this section that $(G,L)$ is two-step and suppose $\dim G>0$. We use the standing homogeneous dimension $Q=m+2r$. Spectral zeta functions of sub-Laplacians on compact two-step nilmanifolds were studied in \cite{BauerFurutaniIwasaki2012,BauerFurutaniIwasaki2015}. The two-step condition is bracket generating, so the kernel of $-\Delta_{\pLie}$ consists of the constants. Let
	\[
	0=\lambda_0<\lambda_1\le\lambda_2\le\cdots
	\]
	be its eigenvalues, repeated with scalar multiplicity. The reduced spectral zeta function is
	\begin{equation*}
		\zeta_{\pLie}(s)
		:=\Tr'\!\left((-\Delta_{\pLie})^{-s}\right)
		=\sum_{j\ge1}\lambda_j^{-s},
		\qquad \Re s>Q/2,
	\end{equation*}
	where the prime means that the zero mode is omitted.
	
	For every $t>0$, hypoellipticity gives $e^{t\Delta_{\pLie}}$ a smooth kernel on the compact group $G$, so the operator is trace class. Since its kernel is $K_t(g,h)=p_t(g^{-1}h)$ and Haar measure has mass one,
	\begin{equation}\label{zeta:heat-trace-identity}
		\Tr\!\left(e^{t\Delta_{\pLie}}\right)
		=\int_G K_t(g,g)\,\mathrm dg=p_t(e).
	\end{equation}
	
	We first analyze the heat trace and its local zeta data. For $Y\in\lLie$, let $J_Y:=\operatorname{ad}(Y)|_{\pLie}$. In the symmetric specialization $L=K$, this is the same operator as $J_{B^*Y}$ by \eqref{eq:sp-isotropy-bracket-bridge}. Reusing the Jacobians fixed in \Cref{sec:preliminaries}, define
	\begin{equation*}
		\mathcal D_{G,L}(Y)
		:=\left(\frac{j_L^{\nc}(Y)}{j_G^{\nc}(Y)}\right)^{1/2}
		=\left[\det\!\left(
		\frac{|J_Y|}{2\sinh(|J_Y|/2)}\right)\right]^{1/2},
	\end{equation*}
	where $|J_Y|=(-J_Y^2)^{1/2}$ and zero eigenvalues are interpreted by continuity. The two-step condition makes $Y\mapsto J_Y$ injective. Indeed, if $[Y,\pLie]=0$, metric invariance and $\gLie=\pLie+[\pLie,\pLie]$ give $Y=0$. Compactness of the unit sphere in $\lLie$ then gives $\Tr|J_Y|\ge c|Y|$. Hence $x/(2\sinh(x/2))\le C(1+x)e^{-x/2}$ implies
	\begin{equation}\label{zeta:relative-decay}
		0<\mathcal D_{G,L}(Y)
		\le C(1+|Y|)^M e^{-c|Y|}
	\end{equation}
	for suitable $c,C>0$ and $M\ge0$. In particular, $\mathcal D_{G,L}$ has finite moments of every order.
	
	Set
	\begin{equation}\label{zeta:local-constants}
		C_{G,L}:=
		\frac{\vol(G)}{(4\pi)^{Q/2}}
		\int_{\lLie}\mathcal D_{G,L}(Y)\,\mathrm dY,
		\qquad
		\beta_{G,L}:=|\rho_G|^2-|\rho_L|^2.
	\end{equation}
	
	These constants can be evaluated for an arbitrary compact connected two-step pair. This separates the metric Lie algebra data from the global normalization of the compact form of $G$.
	
	Let $T\subset L$ be the maximal torus fixed in \Cref{sec:preliminaries}, with positive roots $R_L^+$, Weyl group $\Wt_L$, and
	\[
	\Pi_L(H)=\prod_{\alpha\in R_L^+}\alpha(H).
	\]
	Write the nonzero weights of the real $T$-module $\pLie_{\C}$ as
	\[
	\operatorname{Wt}(\pLie_{\C})\setminus\{0\}
	=\coprod_{\omega\in\Sigma_{\pLie}^+}
	m_\omega\{\omega,-\omega\},
	\]
	where $\Sigma_{\pLie}^+$ contains one representative from each pair $\{\omega,-\omega\}$ and $m_\omega$ is its multiplicity.
	
	Enumerate the distinct positive isotropy weights and their multiplicities as
	\[
	\omega_1,\ldots,\omega_{N_{\mathrm{iso}}},
	\qquad
	m_1,\ldots,m_{N_{\mathrm{iso}}}
	\]
	respectively. Subdivide $\tLie$ by their kernels and the compact root hyperplanes. On a chamber $\mathcal C$, choose signs so that
	\[
	\ell_{\mathcal C,j}=\pm\omega_j,
	\qquad
	a_{\mathcal C,\alpha}=\pm\alpha
	\]
	are positive on $\mathcal C$, and put
	\begin{align*}
		P_{\mathcal C}(H)
		&:=\prod_{\alpha\in R_L^+}a_{\mathcal C,\alpha}(H)^2
		\prod_{j=1}^{N_{\mathrm{iso}}}\ell_{\mathcal C,j}(H)^{m_j},
		\\
		A_{\mathcal C}(\mathbf n)
		&:=\sum_{j=1}^{N_{\mathrm{iso}}}
		\left(n_j+\frac{m_j}{2}\right)\ell_{\mathcal C,j},
		\qquad \mathbf n\in\mathbb N_0^{N_{\mathrm{iso}}}.
	\end{align*}
	The isotropy weights span $\tLie^*$ because $Y\mapsto J_Y$ is injective. Thus each chamber closure $\overline{\mathcal C}$ is \emph{pointed}, meaning $\overline{\mathcal C}\cap(-\overline{\mathcal C})=\{0\}$, or equivalently that it contains no nonzero line. Choose a rational simplicial subdivision with ray generators $v_{\varsigma,1},\ldots,v_{\varsigma,r_T}$ in the cocharacter lattice, where $r_T=\dim T$, and expand
	\begin{equation*}
		P_{\mathcal C}\!\left(\sum_{i=1}^{r_T}t_iv_{\varsigma,i}\right)
		=\sum_\mu p_{\mathcal C,\varsigma,\mu}t^\mu
	\end{equation*}
	over its nonzero monomials.
	
	The following lemma converts the Cartan integral into elementary Laplace transforms on simplicial cones. Pointedness makes every linear denominator strictly positive, while positivity permits Tonelli's theorem. This is the standard calculation of a Laplace transform on a simplicial cone; compare \cite[\S1.5]{BrionVergne1997}.
	
	\begin{lemma}\label{lem:polyhedral-chambers}
		The hyperplanes of the compact roots and isotropy weights form a finite rational arrangement in $\tLie$. Each chamber closure is a pointed rational polyhedral cone and admits a subdivision into rational simplicial cones whose ray generators can be chosen in the cocharacter lattice. On every such cone, the polynomial $P_{\mathcal C}$ has nonnegative coefficients in the ray coordinates, and
		\[
		A_{\mathcal C}(\mathbf n)(v_{\varsigma,i})>0
		\qquad(\mathbf n\in\mathbb N_0^{N_{\mathrm{iso}}}).
		\]
		The overlaps between distinct cones have Lebesgue measure zero.
	\end{lemma}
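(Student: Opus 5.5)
The plan is to separate the statement into four claims and verify each with standard facts about rational hyperplane arrangements and simplicial cones.

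\emph{Rationality of the arrangement.} Each compact root $\alpha\in R_L^+$ and each isotropy weight $\omega_j$ is a real weight of $T$. It therefore takes values in $2\pi\Z$ on the lattice $\Lambda_T=\ker(\exp|_{\tLie})$. So these functionals are rational with respect to the cocharacter lattice, and their kernels form a finite rational central arrangement in $\tLie$. Finiteness holds because $R_L^+$ is finite and $\pLie_\C$ is finite-dimensional. Each chamber $\mathcal C$ is a connected component of the complement. Its closure is $\{H:\ell_{\mathcal C,j}(H)\ge0,\ a_{\mathcal C,\alpha}(H)\ge0\}$, a polyhedral cone cut out by finitely many rational linear inequalities. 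By Minkowski--Weyl it is generated by finitely many rays, and these rays can be chosen rational, hence scaled into $\Lambda_T$.

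\emph{Pointedness.} Suppose $H$ and $-H$ both lie in $\overline{\mathcal C}$. Then $\ell_{\mathcal C,j}(H)=0$ for every $j$. As noted just before the lemma, injectivity of $Y\mapsto J_Y$ means the isotropy weights span $\tLie^*$. Hence $H=0$.

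\emph{Simplicial subdivision and positivity.} A pointed rational cone admits a triangulation into rational simplicial cones using only its own extreme rays as generators; one can use a pulling or placing triangulation of a transverse section. The pieces have disjoint interiors, so their overlaps lie in finitely many hyperplanes and have measure zero. The same holds for overlaps between distinct chambers, which meet only along arrangement hyperplanes.

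On a simplicial piece with generators $v_{\varsigma,i}\in\overline{\mathcal C}$, each linear factor of $P_{\mathcal C}$ is nonnegative on the generators. For example,
\[
\ell_{\mathcal C,j}\Bigl(\sum_i t_iv_{\varsigma,i}\Bigr)=\sum_i t_i\,\ell_{\mathcal C,j}(v_{\varsigma,i})
\]
is a linear form in $t$ with nonnegative coefficients. A product of such forms, including the squares $a_{\mathcal C,\alpha}^2$, again has nonnegative coefficients in the ray coordinates.

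\emph{Strict positivity of $A_{\mathcal C}(\mathbf n)$.} For each generator $v=v_{\varsigma,i}$, every coefficient $n_j+m_j/2$ is at least $m_j/2>0$, and every value $\ell_{\mathcal C,j}(v)$ is $\ge0$. So
\[
A_{\mathcal C}(\mathbf n)(v)\ \ge\ \tfrac12\min_j m_j\ \sum_j\ell_{\mathcal C,j}(v).
\]
If this sum vanished, all the $\ell_{\mathcal C,j}$ would vanish at $v$, and the spanning property would force $v=0$, contradicting that $v$ is a ray generator. This is the only step with real content: it is where pointedness and injectivity of $J$ are used in a form that is uniform in $\mathbf n$.

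I expect no serious obstacle. The main care point is to take the subdivision generators inside $\overline{\mathcal C}$ itself, not from a coarser fan, so that sign-constancy of every factor survives the subdivision.
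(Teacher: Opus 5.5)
Your proof is correct and follows the paper's argument step for step. Rationality comes from integrality of roots and isotropy weights on the cocharacter lattice, and the spanning of $\tLie^*$ by the isotropy weights gives both pointedness and strict positivity of $A_{\mathcal C}(\mathbf n)$ on nonzero rays; nonnegativity of each signed linear factor on $\overline{\mathcal C}$ then gives the coefficients of $P_{\mathcal C}$. Your extra details (a triangulation without new rays, and the uniform bound $A_{\mathcal C}(\mathbf n)(v)\ge\tfrac12\min_j m_j\sum_j\ell_{\mathcal C,j}(v)$) only make explicit what the paper leaves as standard.
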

	
	\begin{proof}
		Roots and isotropy weights are integral on the cocharacter lattice, so the arrangement is rational. If a chamber closure contained a line, every isotropy weight would vanish on that line; the weights span $\tLie^*$, so the line is zero. Standard polyhedral subdivision gives rational simplicial cones, and scaling their rays gives cocharacter lattice generators. Every signed linear form used in $P_{\mathcal C}$ is nonnegative on the chamber; its coefficients in the ray coordinates are therefore nonnegative, and so are the coefficients of their product. On a nonzero ray, at least one isotropy weight is positive. Since each coefficient $n_j+m_j/2$ is positive, the displayed evaluation of $A_{\mathcal C}(\mathbf n)$ is positive. Boundaries of distinct cones lie in proper linear subspaces and have measure zero.
	\end{proof}
	
	\begin{theorem}\label{zeta:complete-local-constants}
		For every compact connected two-step pair $(G,L)$,
		\begin{equation}\label{zeta:curvature-defect}
			\beta_{G,L}
			=\frac{\operatorname{Scal}_G-\operatorname{Scal}_L}{6}
			=\frac1{12}\sum_{a,i}|[Y_a,X_i]|^2
			+\frac1{24}\sum_{i,j}|[X_i,X_j]|^2.
		\end{equation}
		In particular, $\beta_{G,L}\ge0$. For the tangent group $\mathbb N_{G,L}$ and its heat kernel $p_t^0$,
		\begin{equation}\label{zeta:nilpotent-model}
			p_t^0(e)
			=\frac{t^{-Q/2}}{(4\pi)^{Q/2}}
			\int_{\lLie}\mathcal D_{G,L}(Y)\,\mathrm dY,
			\qquad
			C_{G,L}=\vol(G)p_1^0(e).
		\end{equation}
		Equivalently,
		\begin{align}
			C_L&=\frac{\vol(L)}{\vol(T)|\Wt_L|},
			\label{zeta:weyl-constant}\\
			\mathcal D_{G,L}(H)
			&=\prod_{\omega\in\Sigma_{\pLie}^+}
			w\bigl(\omega(H)\bigr)^{m_\omega},
			\qquad H\in\tLie,
			\label{zeta:weight-product}
		\end{align}
		and
		\begin{equation}\label{zeta:root-data-constant}
			C_{G,L}
			=\frac{\vol(G)\vol(L)}
			{(4\pi)^{Q/2}\vol(T)|\Wt_L|}
			\int_{\tLie}\Pi_L(H)^2
			\prod_{\omega\in\Sigma_{\pLie}^+}
			w\bigl(\omega(H)\bigr)^{m_\omega}\,\mathrm dH.
		\end{equation}
		The remaining Cartan integral has the integral-free expansion
		\begin{align}
			\int_{\tLie}\Pi_L(H)^2
			\prod_{j=1}^{N_{\mathrm{iso}}}
			w\bigl(\omega_j(H)\bigr)^{m_j}\,\mathrm dH
			&=\sum_{\mathcal C,\varsigma,\mu}
			\operatorname{vol}_{\tLie}
			(v_{\varsigma,1},\ldots,v_{\varsigma,r_T})
			p_{\mathcal C,\varsigma,\mu}
			\prod_{i=1}^{r_T}\mu_i!
			\notag\\
			&\qquad\quad\times
			\sum_{\mathbf n\in\mathbb N_0^{N_{\mathrm{iso}}}}
			\frac{\displaystyle
				\prod_{j=1}^{N_{\mathrm{iso}}}
				\binom{n_j+m_j-1}{m_j-1}}
			{\displaystyle
				\prod_{i=1}^{r_T}
				\bigl(A_{\mathcal C}(\mathbf n)
				(v_{\varsigma,i})\bigr)^{\mu_i+1}}.
			\label{zeta:polyhedral-dirichlet-formula}
		\end{align}
		All terms are nonnegative, and the multiple Dirichlet series converges absolutely.
	\end{theorem}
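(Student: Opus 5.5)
The statement has four parts, and I will take them in order: the curvature identity for $\beta_{G,L}$, the nilpotent heat kernel at the identity, the Cartan reduction, and the polyhedral Dirichlet expansion.

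\emph{The curvature identity.} I start from the Freudenthal--de Vries strange formula. In the present normalization, $|\rho_G|^2$ is a fixed multiple of $\Tr(-\operatorname{ad}_{\gLie}\text{-Casimir})$. Concretely, with $\Delta_G$ acting on $\gLie$ by $-\sum_k\operatorname{ad}(E_k)^2$, one has $|\rho_G|^2=\frac1{24}\sum_{k,l}|[E_k,E_l]|^2$ over an orthonormal basis $E_k$ of $\gLie$. For a bi-invariant metric, $\operatorname{Scal}_G=\frac14\sum_{k,l}|[E_k,E_l]|^2$. This gives $|\rho_G|^2=\operatorname{Scal}_G/6$, and likewise $|\rho_L|^2=\operatorname{Scal}_L/6$ for the restricted metric on $L$. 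I then split the basis $E_k$ into the $Y_a$ and the $X_i$. The double sum $\sum_{k,l}|[E_k,E_l]|^2$ decomposes into the $\lLie\lLie$, $\lLie\pLie$ (counted twice) and $\pLie\pLie$ pieces. The $\lLie\lLie$ piece is exactly $\sum|[Y_a,Y_b]|^2$, because $[\lLie,\lLie]\subset\lLie$, so it cancels against $24|\rho_L|^2$. The remainder is $\frac{2}{24}\sum|[Y_a,X_i]|^2+\frac1{24}\sum|[X_i,X_j]|^2$, which is the claimed formula and is visibly nonnegative. The only care needed is to check the normalization of the strange formula against the real-root convention of \Cref{sec:preliminaries}. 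I will test it on $\SU(2)$, where $|\rho|^2$ must match the known Casimir shift.

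\emph{The nilpotent model.} I use the central Fourier--Mehler formula exactly as in \eqref{eq:sp-tangent-fourier}, now with $J_Y=\operatorname{ad}(Y)|_{\pLie}$, since the bracket projection onto $\lLie$ dualizes to $J_Y$ by metric invariance. Setting $Z=0$ and substituting $Y=A\cdot 2t$ yields $p_t^0(e)=(2\pi)^{-r}(4\pi t)^{-m/2}(2t)^{-r}\int\prod_j w(\omega_j)\,dY$. Since $\prod_j w(\omega_j(Y))=\mathcal D_{G,L}(Y)$ (eigenvalues of $|J_Y|$ paired), the constants combine to $(4\pi)^{-Q/2}t^{-Q/2}$. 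It then remains to show $\mathcal D_{G,L}=(j_L^{\nc}/j_G^{\nc})^{1/2}$. This follows from the root factorization of \Cref{sec:preliminaries}: the roots of $G$ restricted to $\tLie$ are the roots of $L$ together with the nonzero $\pLie$-weights, and the zero $\pLie$-weights contribute $w(0)=1$. This gives \eqref{zeta:weight-product}, and with \eqref{zeta:local-constants} it gives $C_{G,L}=\vol(G)p_1^0(e)$.

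\emph{Cartan reduction and Dirichlet series.} Applying the Lie algebra Weyl formula \eqref{eq:weyl-lie} to the $\Ad(L)$-invariant function $\mathcal D_{G,L}$ gives \eqref{zeta:root-data-constant}. For \eqref{zeta:polyhedral-dirichlet-formula}, I split $\tLie$ into chambers and simplicial cones via \Cref{lem:polyhedral-chambers}. On each chamber I expand $w(x)^{m}=x^{m}e^{-mx/2}(1-e^{-x})^{-m}=x^m\sum_n\binom{n+m-1}{m-1}e^{-(n+m/2)x}$ for $x>0$, with $x=\ell_{\mathcal C,j}(H)$. The product over $j$ times $\Pi_L^2$ then equals $P_{\mathcal C}(H)\sum_{\mathbf n}\prod_j\binom{\cdot}{\cdot}e^{-A_{\mathcal C}(\mathbf n)(H)}$. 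Next I change to ray coordinates $H=\sum t_iv_{\varsigma,i}$, whose Jacobian is the volume factor, and expand $P_{\mathcal C}$ into monomials. Each term becomes the product $\prod_i\int_0^\infty t_i^{\mu_i}e^{-t_iA(v_i)}dt_i=\prod_i\mu_i!/A(v_i)^{\mu_i+1}$. All terms are nonnegative by the lemma, so Tonelli justifies the interchange, and absolute convergence follows because the total equals the finite integral, which is finite by \eqref{zeta:relative-decay}.

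The main obstacle I expect is the normalization bookkeeping in the first two steps: the strange-formula constant in this metric convention, and matching the Fourier convention and powers of $2\pi$ so that the prefactor comes out exactly $(4\pi)^{-Q/2}$. I will fix both by checking them against $\SU(2)$ with $L=\{e\}$ and with $L=T$.
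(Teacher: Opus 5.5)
Your proposal is correct. For the nilpotent model, the Cartan reduction and the polyhedral Dirichlet expansion you follow essentially the paper's argument: the central Fourier--Mehler (Cygan) formula with $J_Y=\operatorname{ad}(Y)|_{\pLie}$, the Lie algebra Weyl formula applied to the $\Ad(L)$-invariant function $\mathcal D_{G,L}$, and the expansion of $(1-e^{-\ell})^{-m}$ followed by Tonelli and Laplace transforms on simplicial cones. Your bookkeeping $(2\pi)^{-r}(2t)^{-r}=(4\pi t)^{-r}$ gives exactly the prefactor $(4\pi t)^{-Q/2}$.

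The genuine difference is in the curvature identity.

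\textbf{The paper's route.} It obtains $\operatorname{Scal}_H=6|\rho_H|^2$ analytically. It evaluates the image formula of \Cref{thm:heat-kernel-image} at the identity, where the zero lattice vector gives $\vol(H)(4\pi t)^{-\dim H/2}e^{t|\rho_H|^2}$ up to an exponentially small error. It then matches this against the universal first heat coefficient $\operatorname{Scal}_H/6$. In effect this is a heat-kernel proof of the strange formula, and it gets the normalization for free from the paper's own machinery.

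\textbf{Your route.} You invoke the Freudenthal--de Vries strange formula in the form $|\rho|^2=\frac1{24}\sum_{k,l}|[E_k,E_l]|^2$. This is purely algebraic and bypasses heat invariants, but you must import the formula and verify its normalization. It does hold for every invariant inner product: both sides scale like the inverse of the metric and are additive over orthogonal simple ideals and the center. Your proposed check is consistent with the paper: the trace-metric value $|\rho_{\SU(N)}|^2=N(N^2-1)/12$ agrees. After this point, your splitting of $\sum_{k,l}|[E_k,E_l]|^2$ into $\lLie\lLie$, $\lLie\pLie$ and $\pLie\pLie$ blocks is the same computation as the paper's.

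\textbf{A small bookkeeping point.} When $\operatorname{rank}L<\operatorname{rank}G$, some roots of $G$ restrict to zero on $\tLie$. The precise multiset statement is: the restricted $G$-roots together with $\dim\tLie_G$ zeros equal the $L$-roots together with $\dim\tLie$ zeros and all $\pLie$-weights. Since the relevant factors equal $1$ at $0$, this does not affect your identification of $\mathcal D_{G,L}$ with $(j_L^{\nc}/j_G^{\nc})^{1/2}$. In any case, that equality is built into the paper's definition of $\mathcal D_{G,L}$ rather than being something this theorem requires.
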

	
	\begin{proof}
		For a compact connected group $H$ with a bi-invariant metric, apply the ordinary image formula to its heat kernel at the identity. The zero lattice vector gives
		\[
		q_t^H(e)=\frac{\vol(H)}{(4\pi t)^{\dim H/2}}
		e^{t|\rho_H|^2}+O(t^{-N}e^{-c/t}).
		\]
		Comparison with the universal first heat coefficient \cite[Chapter~1]{Gilkey}, $\vol(H)(4\pi t)^{-\dim H/2}(1+\operatorname{Scal}_H t/6+O(t^2))$, gives $\operatorname{Scal}_H=6|\rho_H|^2$ in the present normalization. Now expand $\operatorname{Scal}_G=\frac14\sum_{u,v}|[E_u,E_v]|^2$ in an orthonormal basis adapted to $\gLie=\lLie\oplus\pLie$ and subtract the corresponding formula for $L$. Dividing by six gives \eqref{zeta:curvature-defect}.
		
		The general class-two heat kernel formula gives \eqref{zeta:nilpotent-model} \cite{Cygan1979}. On $\tLie$, the nonzero eigenvalues of $|J_H|$ occur as $|\omega(H)|$ with multiplicity $m_\omega$, which proves \eqref{zeta:weight-product}. The metric measure Weyl formula is
		\[
		\int_{\lLie}f(Y)\,\mathrm dY
		=\frac{\vol(L)}{\vol(T)|\Wt_L|}
		\int_{\tLie}f(H)\Pi_L(H)^2\,\mathrm dH
		\]
		for every integrable $\Ad(L)$-invariant function $f$. Applying it to $\mathcal D_{G,L}$ proves \eqref{zeta:weyl-constant} and \eqref{zeta:root-data-constant}.
		
		By \Cref{lem:polyhedral-chambers}, the chamber decomposition and its simplicial subdivisions have the stated positivity properties. On a chamber $\mathcal C$,
		\[
		w\bigl(\omega_j(H)\bigr)^{m_j}
		=\ell_{\mathcal C,j}(H)^{m_j}
		e^{-m_j\ell_{\mathcal C,j}(H)/2}
		\bigl(1-e^{-\ell_{\mathcal C,j}(H)}\bigr)^{-m_j}.
		\]
		Expand the last factor as
		\[
		(1-e^{-\ell})^{-m}
		=\sum_{n\ge0}\binom{n+m-1}{m-1}e^{-n\ell}.
		\]
		Tonelli's theorem applies because every term is nonnegative. On a simplicial cone $H=\sum_i t_i v_{\varsigma,i}$,
		\[
		\int_0^\infty t_i^{\mu_i}
		e^{-A_{\mathcal C}(\mathbf n)(v_{\varsigma,i})t_i}\,\mathrm dt_i
		=\frac{\mu_i!}
		{A_{\mathcal C}(\mathbf n)(v_{\varsigma,i})^{\mu_i+1}}.
		\]
		Multiplication gives \eqref{zeta:polyhedral-dirichlet-formula}. Pointedness and the interior-dual position of $A_{\mathcal C}(\mathbf n)$ make every denominator positive; finiteness of the original integral gives absolute convergence. The metric volumes in the prefactor are computable from the metric cocharacter data by Macdonald's formula \cite[pp.~660-661]{Hashimoto}.
	\end{proof}
	
	The behavior under central factors separates local from global data. A connected central torus is part of the metric Lie algebra pair and may change $Q$, the isotropy weights, and the volumes. A finite central local isometry quotient leaves $\mathcal D_{G,L}$, $p_1^0(e)$, and $\beta_{G,L}$ unchanged. More precisely, if $\pi:\widetilde G\to G$ has kernel $F$ and $\widetilde L^{\circ}$ is the identity component of $\pi^{-1}(L)$, then
	\begin{equation*}
		C_{\widetilde G,\widetilde L^{\circ}}=|F|C_{G,L},
		\qquad
		\beta_{\widetilde G,\widetilde L^{\circ}}=\beta_{G,L}.
	\end{equation*}
	The affine lattice, lattice sign, and character sector restrictions are carried by the global remainder, while the local constants depend on the metric Lie algebra pair.
	
	The following fact will be used in both the local and global estimates. Let $U\subset\tLie_{G,\C}$ be a Weyl-invariant open set and let $F$ be a holomorphic Weyl anti-invariant function on $U$, meaning $F(wH)=\det(w)F(H)$ for $w\in\Wt_G$. Then $F/\Pi_G$ extends holomorphically across the root hyperplanes. If $S$ is a compact subset of $U$, its supremum on $S$ is bounded by derivatives of $F$ of order at most $|R_G^+|$ on a slightly larger compact subset of $U$. In particular, if $U=\tLie_{G,\C}$ and $H\in\tLie_G$, then
	\begin{equation}\label{zeta:weyl-division-bound}
		\left|\frac{F(H)}{\Pi_G(H)}\right|
		\le C_G\max_{0\le j\le |R_G^+|}
		\sup_{|X|\le |H|}\|\nabla^jF(X)\|.
	\end{equation}
	
	Indeed, anti-invariance makes $F$ vanish on the fixed hyperplane of every root reflection. For $\alpha\in R_G^+$, put
	\[
	n_\alpha:=\frac{\alpha^\sharp}{|\alpha|^2},
	\qquad
	\pi_\alpha H:=H-\alpha(H)n_\alpha.
	\]
	Thus $\alpha(n_\alpha)=1$ and $\pi_\alpha$ is orthogonal projection onto $\ker\alpha$. The fundamental theorem of calculus gives the explicit division formula
	\begin{equation}\label{zeta:single-root-division}
		\frac{F(H)}{\alpha(H)}
		=\int_0^1
		(\partial_{n_\alpha}F)
		\bigl(\pi_\alpha H+u\alpha(H)n_\alpha\bigr)\,\mathrm du,
	\end{equation}
	first off $\ker\alpha$ and then everywhere by holomorphic continuation. Hence $F=\alpha F_\alpha$ with $F_\alpha$ holomorphic. If $\beta\ne\alpha$, then $F_\alpha=F/\alpha$ vanishes on the open dense part of $\ker\beta$ on which $\alpha\ne0$, and therefore on all of $\ker\beta$. Repeating the division over the positive roots gives $F=\Pi_G\widetilde F$ with $\widetilde F$ holomorphic on $U$.
	
	Iterating \eqref{zeta:single-root-division} expresses $\widetilde F(H)$ as a finite integral of derivatives of $F$ of order at most $|R_G^+|$. Orthogonal projection and the intervening line segments do not increase the norm, so every evaluation point for real $H$ lies in the ball $\{|X|\le|H|\}$. This proves \eqref{zeta:weyl-division-bound}. For a compact subset of $U$, choose a slightly larger compact neighborhood containing all the finitely many projected segments; the same integral formulas and Cauchy estimates give the asserted complex Cartan bound.
	
	\begin{proposition}
		For every compact connected $G$ there exist integers $A,M,N\ge0$ and constants $c,C>0$ such that, for $0<t\le1$ and $Y\in\gLie$,
		\begin{align}
			&j_G^{\nc}(Y)^{1/2}e^{-|Y|^2/(4t)}q_t^{\C}(e^{iY})
			=\vol(G)\frac{e^{t|\rho_G|^2}}{(4\pi t)^{\dim G/2}}
			+R_t(Y),
			\label{zeta:ambient-small-time-estimate}
		\end{align}
		where
		\begin{equation}\label{zeta:ambient-small-time-bound}
			|R_t(Y)|\le Ct^{-N}e^{-c/t}(1+|Y|)^M.
		\end{equation}
		More precisely, each nonzero Weyl orbit $\mathcal O$ in the semisimple lattice satisfies
		\begin{equation}\label{zeta:ambient-orbit-bound}
			\left|
			\frac{\sum_{\gamma\in\mathcal O}\varepsilon_G(\gamma)
				\Pi_G(Y-i\gamma)e^{-i\langle\gamma,Y\rangle/(2t)}}
			{\Pi_G(Y)}
			\right|
			\le Ct^{-N}(1+|Y|+\ell_{\mathcal O})^A.
		\end{equation}
	\end{proposition}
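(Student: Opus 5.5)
Reduce to the Cartan subalgebra and then separate the zero lattice vector from the nonzero ones in the theta quotient of \Cref{thm:heat-kernel-image}. Both sides of \eqref{zeta:ambient-small-time-estimate} are $\Ad(G)$-invariant in $Y$. Indeed, $j_G^{\nc}$, $|Y|$ and the class function $q_t^{\C}$ are invariant, and $e^{i\Ad(k)Y}=ke^{iY}k^{-1}$. So it suffices to take $Y=H\in\tLie_G$. Apply \eqref{eq:ambient-complex-image} at $Z=iH$. Using \eqref{eq:ambient-jacobian-square-root}, $j_G^{\nc}(H)^{1/2}=j_G(iH)^{1/2}=\mathscr S_G(iH)/\Pi_G(iH)$, so the factor $\mathscr S_G(iH)$ cancels the denominator of $\mathscr W_{G,t}(iH)$. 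This leaves
\[
j_G^{\nc}(H)^{1/2}e^{-|H|^2/4t}q_t^{\C}(e^{iH})
=\frac{\vol(G)e^{t|\rho_G|^2}}{(4\pi t)^{\dim G/2}}
\sum_{\gamma\in\Lambda_G}\varepsilon_G(\gamma)\frac{\Pi_G(iH+\gamma)}{\Pi_G(iH)}
e^{-\langle iH+\gamma,iH+\gamma\rangle/4t-|H|^2/4t}.
\]
Expanding the complex bilinear square gives exponent $-|\gamma|^2/4t-i\langle\gamma,H\rangle/2t$, and $\Pi_G(iH+\gamma)=i^{|R_G^+|}\Pi_G(H-i\gamma)$. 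The $\gamma=0$ term is exactly the main term. If the center is nontrivial, the torus directions carry no roots and only contribute Gaussian factors, and the same computation applies.

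For the remainder, group the nonzero $\gamma$ into $\Wt_G$-orbits $\mathcal O$ of length $\ell_{\mathcal O}=|\gamma|$. Then
\[
R_t(H)=\frac{\vol(G)e^{t|\rho_G|^2}}{(4\pi t)^{\dim G/2}}\sum_{\mathcal O\ne0}e^{-\ell_{\mathcal O}^2/4t}\,\frac{F_{\mathcal O}(H)}{\Pi_G(H)},
\qquad
F_{\mathcal O}(H):=\sum_{\gamma\in\mathcal O}\varepsilon_G(\gamma)\Pi_G(H-i\gamma)e^{-i\langle\gamma,H\rangle/2t}.
\]
This uses that $\varepsilon_G$ is $\Wt_G$-invariant: $\rho_G-w\rho_G$ is a sum of roots, which are $2\pi\Z$-valued on $\Lambda_G$. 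The key observation is that $F_{\mathcal O}$ is entire and Weyl anti-invariant. Substituting $\gamma\mapsto w\gamma$ and using $\Pi_G\circ w^{-1}=\det(w)\Pi_G$ gives $F_{\mathcal O}(wH)=\det(w)F_{\mathcal O}(H)$. So the division bound \eqref{zeta:weyl-division-bound} applies and controls $F_{\mathcal O}/\Pi_G$ by derivatives of $F_{\mathcal O}$ of order at most $|R_G^+|$ on the ball $|X|\le|H|$. Each derivative of the exponential costs a factor $|\gamma|/2t$. Derivatives of the polynomial $\Pi_G(X-i\gamma)$ are bounded by $(1+|H|+\ell_{\mathcal O})^{|R_G^+|}$. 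The phase has modulus one for real $X$, and $|\mathcal O|\le|\Wt_G|$. Together these give \eqref{zeta:ambient-orbit-bound} with $N=|R_G^+|$ and $A=2|R_G^+|$.

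Finally, sum over orbits. Lattice points satisfy $\ell_{\mathcal O}\ge\ell_0>0$, and the number of orbits with $\ell_{\mathcal O}\le R$ grows polynomially in $R$. Split $e^{-\ell^2/4t}\le e^{-\ell_0^2/8t}e^{-\ell^2/8t}$ and use $(1+|H|+\ell)^A\le(1+|H|)^A(1+\ell)^A$. For $t\le1$ the series $\sum(1+\ell)^Ae^{-\ell^2/8t}$ is bounded uniformly in $t$. This yields \eqref{zeta:ambient-small-time-bound} with $c=\ell_0^2/8$, $M=A$, and $N$ enlarged by $\dim G/2$.

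The main obstacle is the division step. Termwise, $\Pi_G(H-i\gamma)/\Pi_G(H)$ is singular on root hyperplanes, so only the orbit sum is controlled. One must check carefully that the derivative bounds from \eqref{zeta:weyl-division-bound} produce only polynomial factors in $t^{-1}$ and $|H|$, with no growth from the phase on the real slice. A further check is that anti-invariance survives the signs $\varepsilon_G$ in non-simply-connected cases.
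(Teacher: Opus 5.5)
Your proposal is correct and follows essentially the same route as the paper's proof. Both arguments reduce to the Cartan by conjugation invariance and specialize \Cref{thm:heat-kernel-image} at $Z=iH$, so the zero lattice vector gives the main term. They then bound the nonzero Weyl-orbit sums via anti-invariance and the division bound \eqref{zeta:weyl-division-bound}, and sum over orbits by Gaussian lattice counting. Your explicit check that $\varepsilon_G$ is Weyl invariant and your uniform treatment of central and mixed lattice vectors, instead of the paper's three orbit types, are harmless refinements. Two small points remain: round $N$ up to an integer (using $t\le1$), and note that the bound passes across the root hyperplanes by continuity of the entire quotients.
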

	
	\begin{proof}
		Conjugation invariance reduces the estimate to a maximal torus. Specialize \Cref{thm:heat-kernel-image} to $Z=iY$ and multiply by $j_G^{\nc}(Y)^{1/2}e^{-|Y|^2/(4t)}$. The zero lattice vector gives the first term in \eqref{zeta:ambient-small-time-estimate}. For a nonzero semisimple orbit of the Weyl group, the numerator in \eqref{zeta:ambient-orbit-bound} is Weyl anti-invariant. Iterating \eqref{zeta:single-root-division} shows that each derivative either acts on $\Pi_G(Y-i\gamma)$ or on the phase, producing at most a factor $C(1+|Y|+|\gamma|)$ or $C|\gamma|/t$. Since a Weyl orbit has uniformly bounded cardinality, \eqref{zeta:weyl-division-bound} proves \eqref{zeta:ambient-orbit-bound}.
		
		Write the common toral Lie algebra of the finite cover used in \Cref{thm:heat-kernel-image} as
		\[
		\widetilde{\tLie}_G=\mathfrak z(\gLie)\oplus\tLie_{G,\mathrm{ss}},
		\qquad
		\widetilde\Lambda=\Lambda_{\mathfrak z}\oplus\Lambda_{\mathrm{ss}}.
		\]
		By \eqref{eq:central-lattice-sequence}, the actual lattice is a finite union
		\[
		\Lambda_G=\bigcup_{f\in F}(\eta_f+\widetilde\Lambda)
		\]
		of product lattice cosets. The Weyl group acts trivially on the central factor. A pure central vector therefore contributes only its Gaussian and unit-modulus phase. A pure semisimple orbit satisfies \eqref{zeta:ambient-orbit-bound}. For a mixed vector, its central component is fixed along the Weyl orbit, while the semisimple component is governed by the iterated Weyl division estimate \eqref{zeta:weyl-division-bound}, obtained by repeating the one-root formula \eqref{zeta:single-root-division}; hence its orbit is bounded by
		\[
		Ct^{-N}(1+|Y|+|\gamma|)^A.
		\]
		The signs $\varepsilon_G(\gamma)$ have absolute value one and do not alter this estimate.
		
		Let $\lambda_*>0$ be the shortest nonzero length in $\Lambda_G$. Polynomial lattice counting, applied separately to the finitely many cosets, gives
		\begin{equation*}
			\sum_{\gamma\in\Lambda_G\setminus\{0\}}
			(1+|\gamma|)^A e^{-|\gamma|^2/(4t)}
			\le Ct^{-N_0}e^{-\lambda_*^2/(8t)},
			\qquad 0<t\le1.
		\end{equation*}
		Indeed, absorb the polynomial into $t^{-N_0}e^{-|\gamma|^2/(8t)}$ and sum the remaining Gaussian; discreteness supplies the factor $e^{-\lambda_*^2/(8t)}$. Combining this estimate with the three orbit types above proves \eqref{zeta:ambient-small-time-bound}. The removable continuation in \Cref{thm:heat-kernel-image} extends the estimate across the root hyperplanes.
	\end{proof}
	
	\begin{theorem}\label{zeta:complete-local-heat}
		For every compact connected two-step pair $(G,L)$, there are $c>0$ and $N\ge0$ such that, as $t\downarrow0$,
		\begin{equation}\label{zeta:complete-local-heat-equation}
			\Tr\!\left(e^{t\Delta_{\pLie}}\right)
			=p_t(e)
			=C_{G,L}t^{-Q/2}e^{\beta_{G,L}t}
			+O\!\left(t^{-N}e^{-c/t}\right).
		\end{equation}
		Consequently, the coefficient of $t^{\ell-Q/2}$ in the heat trace expansion is
		\begin{equation}\label{zeta:all-heat-coefficients}
			C_{G,L}\frac{\beta_{G,L}^{\ell}}{\ell!},
			\qquad \ell\ge0.
		\end{equation}
	\end{theorem}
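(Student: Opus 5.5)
We would start from the integral representation of \Cref{thm:integral-formula} at $g=e$, combined with the small-time estimate \eqref{zeta:ambient-small-time-estimate}. The plan is to rewrite the integrand so that the zero lattice vector produces exactly $C_{G,L}t^{-Q/2}e^{\beta_{G,L}t}$, while everything else is exponentially small.

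First we substitute $Y\mapsto Y$ in $\lLie\subset\gLie$ and write
\[
q_t^{\C}(e^{iY})\,e^{-|Y|^2/(4t)}
=j_G^{\nc}(Y)^{-1/2}\Bigl(\vol(G)\frac{e^{t|\rho_G|^2}}{(4\pi t)^{\dim G/2}}+R_t(Y)\Bigr).
\]
Inserting this into \Cref{thm:integral-formula} with $g=e$ gives $p_t(e)=I_0(t)+I_1(t)$. Since $j_L^{\nc}(Y)^{1/2}j_G^{\nc}(Y)^{-1/2}=\mathcal D_{G,L}(Y)$, the main term is
\[
I_0(t)=\frac{\vol(G)e^{t(|\rho_G|^2-|\rho_L|^2)}}{(4\pi t)^{(\dim G+\dim L)/2}}\int_{\lLie}\mathcal D_{G,L}(Y)\,\mathrm dY .
\]
This is exactly $C_{G,L}t^{-Q/2}e^{\beta_{G,L}t}$ by \eqref{zeta:local-constants}, because $Q=\dim G+\dim L$. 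No rescaling of $Y$ is needed, since the Gaussian $e^{-|Y|^2/4t}$ has been absorbed. Convergence of this integral follows from \eqref{zeta:relative-decay}.

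The remainder is
\[
I_1(t)=e^{-t|\rho_L|^2}(4\pi t)^{-\dim L/2}\int_{\lLie}\mathcal D_{G,L}(Y)R_t(Y)\,\mathrm dY.
\]
By \eqref{zeta:ambient-small-time-bound}, $|R_t(Y)|\le Ct^{-N}e^{-c/t}(1+|Y|)^M$. By \eqref{zeta:relative-decay}, $\mathcal D_{G,L}$ has all polynomial moments. Hence
\[
|I_1(t)|\le C't^{-N-\dim L/2}e^{-c/t},
\]
which is the claimed $O(t^{-N'}e^{-c/t})$. The identity $\Tr(e^{t\Delta_{\pLie}})=p_t(e)$ is \eqref{zeta:heat-trace-identity}.

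The main obstacle is justifying the first display. The function $j_G^{\nc}$ restricted to $\lLie$ must be positive, so that $\mathcal D_{G,L}$ is well defined. This holds because $j_G^{\nc}(Y)=\det\bigl(\sinh(\operatorname{ad}Y/2)/(\operatorname{ad}Y/2)\bigr)>0$, with eigenvalues of $\operatorname{ad}Y$ purely imaginary. One must also check that the factorization $j_L^{\nc}/j_G^{\nc}=\det\bigl(|J_Y|/(2\sinh(|J_Y|/2))\bigr)$ holds. This follows from the $\operatorname{ad}Y$-invariant splitting $\gLie=\lLie\oplus\pLie$, which holds because $Y\in\lLie$ preserves $\lLie$ and, by invariance of the metric, also preserves $\pLie$. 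In addition, the estimate \eqref{zeta:ambient-small-time-bound} must hold uniformly for $Y\in\lLie\subset\gLie$, which the proposition provides.

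Finally, the coefficients \eqref{zeta:all-heat-coefficients} follow by expanding $e^{\beta_{G,L}t}=\sum_\ell\beta_{G,L}^\ell t^\ell/\ell!$. The remainder $O(t^{-N}e^{-c/t})$ is $O(t^k)$ for every $k$, so it contributes to no algebraic coefficient. Uniqueness of asymptotic expansions then identifies the coefficient of $t^{\ell-Q/2}$.
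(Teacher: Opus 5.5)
Your proposal is correct and follows the paper's proof essentially verbatim. You insert \eqref{zeta:ambient-small-time-estimate} into \Cref{thm:integral-formula} at $g=e$, identify the zero-lattice term through $\mathcal D_{G,L}=(j_L^{\nc}/j_G^{\nc})^{1/2}$ and \eqref{zeta:local-constants} with $Q=\dim G+\dim L$, and bound the remainder using the polynomial moments supplied by \eqref{zeta:relative-decay}.

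One small slip, which does not affect the argument: the correct formula is $j_G^{\nc}(Y)=\det\bigl(\sin(\operatorname{ad}Y/2)/(\operatorname{ad}Y/2)\bigr)$. The $\sinh$ expression you wrote is the real-slice Jacobian $j_G(Y)$, which can vanish. Positivity of $j_G^{\nc}$ on $\lLie$ is already recorded in the paper, so nothing depends on this.
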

	
	\begin{proof}
		Insert \eqref{zeta:ambient-small-time-estimate} into \Cref{thm:integral-formula} at $g=e$. The identity
		\[
		\left(j_L^{\nc}(Y)/j_G^{\nc}(Y)\right)^{1/2}
		=\mathcal D_{G,L}(Y)
		\]
		and \eqref{zeta:local-constants} turn the zero lattice term into $C_{G,L}t^{-Q/2}e^{\beta_{G,L}t}$. By \eqref{zeta:relative-decay}, every polynomial moment of $\mathcal D_{G,L}$ is finite. Therefore \eqref{zeta:ambient-small-time-bound} remains $O(t^{-N'}e^{-c/t})$ after Hall inversion. This proves \eqref{zeta:complete-local-heat-equation}; expansion of $e^{\beta_{G,L}t}$ gives \eqref{zeta:all-heat-coefficients}.
	\end{proof}
	
	The reduced Mellin formula is
	\begin{equation}\label{zeta:mellin-formula}
		\zeta_{\pLie}(s)
		=\frac1{\Gamma(s)}\int_0^\infty
		t^{s-1}\bigl(p_t(e)-1\bigr)\,\mathrm dt,
		\qquad \Re s>Q/2.
	\end{equation}
	The heat calculus for positive Rockland operators gives meromorphic continuation and regularity at the nonpositive integers \cite[Corollary~2]{DaveHaller}; the preceding heat formula identifies all local terms explicitly.
	
	\begin{corollary}\label{zeta:local-zeta-data}
		The difference
		\begin{equation}\label{zeta:polar-model}
			\zeta_{\pLie}(s)
			-\frac{C_{G,L}}{\Gamma(s)}
			\sum_{\ell=0}^{\infty}
			\frac{\beta_{G,L}^{\ell}}
			{\ell!\,(s-Q/2+\ell)}
		\end{equation}
		is entire. Hence
		\begin{equation}\label{zeta:residues}
			\operatorname*{Res}_{s=Q/2-\ell}\zeta_{\pLie}(s)
			=\frac{C_{G,L}\beta_{G,L}^{\ell}}
			{\ell!\,\Gamma(Q/2-\ell)}
		\end{equation}
		whenever $Q/2-\ell$ is not a nonpositive integer. If $Q/2\in\mathbb N_0$, the only possible poles are the simple poles at
		\[
		s=1,\ldots,Q/2.
		\]
		Some of them may disappear when the corresponding local coefficient vanishes. In this case,
		\begin{align}
			\zeta_{\pLie}(0)
			&=C_{G,L}\frac{\beta_{G,L}^{Q/2}}{(Q/2)!}-1,
			\label{zeta:value-zero-even}\\
			\zeta_{\pLie}(-j)
			&=(-1)^j j!C_{G,L}
			\frac{\beta_{G,L}^{Q/2+j}}{(Q/2+j)!},
			\qquad j\ge1.
			\label{zeta:negative-values-even}
		\end{align}
		If $Q/2\in\frac12+\mathbb N_0$, then
		\begin{equation}\label{zeta:negative-values-odd}
			\zeta_{\pLie}(0)=-1,
			\qquad
			\zeta_{\pLie}(-j)=0\quad(j\ge1).
		\end{equation}
	\end{corollary}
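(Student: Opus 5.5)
The plan is to split the Mellin integral \eqref{zeta:mellin-formula} at $t=1$ and insert the full small-time expansion of \Cref{zeta:complete-local-heat}. For $\Re s>Q/2$ we write
\[
\Gamma(s)\zeta_{\pLie}(s)=\int_1^\infty t^{s-1}\bigl(p_t(e)-1\bigr)\,\mathrm dt+\int_0^1 t^{s-1}p_t(e)\,\mathrm dt-\frac1s .
\]
The first integral is entire in $s$. The two-step condition makes the kernel of $-\Delta_{\pLie}$ exactly the constants, so $p_t(e)-1=\sum_{j\ge1}e^{-t\lambda_j}$. By the small-time bound this sum is finite at $t=1$, and it is $O(e^{-\lambda_1 t})$ for $t\ge1$.

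For the second integral we substitute $p_t(e)=C_{G,L}t^{-Q/2}e^{\beta_{G,L}t}+E(t)$ with $|E(t)|\le Ct^{-N}e^{-c/t}$. The $E$-part $\int_0^1t^{s-1}E(t)\,\mathrm dt$ converges for every $s$ because $e^{-c/t}$ kills every power of $t$, so it is entire. For the main part we expand $e^{\beta t}=\sum_\ell\beta^\ell t^\ell/\ell!$ and integrate termwise; termwise integration is justified by absolute convergence on compacta away from the poles, since $\sum_\ell\beta^\ell/(\ell!\,|s-Q/2+\ell|)$ converges locally uniformly there. This produces
\[
C_{G,L}\sum_{\ell\ge0}\frac{\beta^\ell}{\ell!\,(s-Q/2+\ell)} .
\]
Dividing by $\Gamma(s)$, whose reciprocal is entire, we find that the difference in \eqref{zeta:polar-model} equals $\Gamma(s)^{-1}$ times an entire function, minus $1/(s\Gamma(s))=1/\Gamma(s+1)$, which is also entire. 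This gives meromorphic continuation, with all poles contained in the model series.

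Next we read off the residues. At $s_\ell=Q/2-\ell$ not in $-\mathbb N_0$, the factor $1/\Gamma$ is regular and nonzero, which gives \eqref{zeta:residues}. If $Q/2\in\mathbb N_0$, then for $\ell\ge Q/2$ the pole $s_\ell\in-\mathbb N_0$ is cancelled by the zero of $1/\Gamma$, so the only possible poles are at $1,\dots,Q/2$. The values there come from $\lim_{s\to-j}1/(\Gamma(s)(s+j))=(-1)^jj!$. For $j\ge0$ we pick the term $\ell=Q/2+j$ and get
\[
\zeta_{\pLie}(-j)=(-1)^jj!\,C_{G,L}\,\frac{\beta^{Q/2+j}}{(Q/2+j)!}+(\text{contribution of }-1/s) .
\]
The other terms, together with the entire pieces multiplied by $1/\Gamma(-j)=0$, vanish. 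The $-1/s$ term contributes $-1/\Gamma(s+1)$, which at $s=-j$ equals $-1$ for $j=0$ and $0$ for $j\ge1$. This yields \eqref{zeta:value-zero-even} and \eqref{zeta:negative-values-even}. If $Q/2$ is a half-integer, no model pole meets $-\mathbb N_0$, so only the $-1/\Gamma(s+1)$ term survives, giving \eqref{zeta:negative-values-odd}.

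The main technical point is the interchange of summation and integration for the entire series in $\ell$ and the claim that $\int_0^1t^{s-1}E(t)\,\mathrm dt$ is entire. Both need only the exponential remainder from \Cref{zeta:complete-local-heat} together with dominated convergence. A secondary check is that $p_t(e)-1$ decays exponentially at infinity, which uses the spectral gap $\lambda_1>0$.
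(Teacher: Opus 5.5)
Your proposal is correct and follows the paper's proof essentially step for step. You split the Mellin integral at $t=1$, use the exponentially small remainder of \Cref{zeta:complete-local-heat} and the spectral gap to isolate entire pieces, and integrate $t^{s-Q/2-1}e^{\beta_{G,L}t}$ termwise. You then read off residues and nonpositive values from $\lim_{s\to-j}1/((s+j)\Gamma(s))=(-1)^jj!$ together with the zero-mode term $-1/\Gamma(s+1)$. One cosmetic point: the termwise interchange is properly justified first for $\Re s>Q/2$, by Tonelli using $\beta_{G,L}\ge0$, and the locally uniform convergence of the resulting series then supplies the continuation, exactly as the paper does.
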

	
	\begin{proof}
		Put
		\[
		E_0(t):=p_t(e)-C_{G,L}t^{-Q/2}e^{\beta_{G,L}t},
		\qquad
		E_\infty(t):=p_t(e)-1.
		\]
		By \eqref{zeta:complete-local-heat-equation}, there are $c>0$ and $N\ge0$ such that
		\[
		E_0(t)=O(t^{-N}e^{-c/t})\qquad(t\downarrow0).
		\]
		In particular, $E_0(t)=O(t^M)$ for every $M>0$. The spectral gap makes $E_\infty(t)$ exponentially decreasing at infinity. Consequently
		\[
		H_0(s):=\int_0^1t^{s-1}E_0(t)\,\mathrm dt,
		\qquad
		H_\infty(s):=\int_1^\infty t^{s-1}E_\infty(t)\,\mathrm dt
		\]
		are entire functions. Splitting \eqref{zeta:mellin-formula} at $t=1$ gives
		\begin{align*}
			\zeta_{\pLie}(s)
			&=\frac{H_0(s)+H_\infty(s)}{\Gamma(s)}
			+\frac{C_{G,L}}{\Gamma(s)}
			\int_0^1t^{s-Q/2-1}e^{\beta_{G,L}t}\,\mathrm dt
			-\frac{1}{s\Gamma(s)}.
		\end{align*}
		Expanding the exponential and integrating termwise first for $\Re s>Q/2$, then continuing meromorphically, yields
		\[
		\zeta_{\pLie}(s)
		=\frac{H_0(s)+H_\infty(s)}{\Gamma(s)}
		+\frac{C_{G,L}}{\Gamma(s)}
		\sum_{\ell\ge0}
		\frac{\beta_{G,L}^\ell}{\ell!\,(s-Q/2+\ell)}
		-\frac1{\Gamma(s+1)}.
		\]
		Let $S\subset\C$ be compact and disjoint from the candidate poles $Q/2-\mathbb N_0$. For all sufficiently large $\ell$,
		\[
		\inf_{s\in S}|s-Q/2+\ell|\ge\frac{\ell}{2}.
		\]
		Factorial decay then gives uniform convergence on $S$ of the displayed series and of every $s$-derivative; its finitely many initial summands are holomorphic on $S$. Thus, away from its simple candidate poles, the series and all of its $s$-derivatives converge uniformly on compact subsets and therefore define a meromorphic function. The first and last terms are entire, proving \eqref{zeta:polar-model}. Its residue at $s=Q/2-\ell$ is the one displayed in \eqref{zeta:residues} unless the zero of $1/\Gamma(s)$ cancels the pole. At $s=-j\in-\mathbb N_0$, call the local index $\ell$ \emph{resonant} when its denominator also vanishes there, that is, $-j-Q/2+\ell=0$. The factor $1/\Gamma(s)$ kills every nonresonant local summand and both entire Mellin transforms. A resonant summand occurs exactly when $Q/2\in\mathbb N_0$ and $\ell=Q/2+j$; using
		\[
		\lim_{s\to-j}\frac{1}{(s+j)\Gamma(s)}=(-1)^j j!
		\]
		gives \eqref{zeta:value-zero-even} and \eqref{zeta:negative-values-even}. The deleted zero mode is the explicit term $-1/\Gamma(s+1)$: it equals $-1$ at $s=0$ and vanishes at the negative integers. If $Q/2$ is half-integral there is no resonant summand, which gives \eqref{zeta:negative-values-odd}.
	\end{proof}
	
	The leading residue is $C_{G,L}/\Gamma(Q/2)$. Equivalently, the eigenvalue counting function satisfies
	\begin{equation*}
		\#\{j:\lambda_j\le\Lambda\}
		\sim \frac{C_{G,L}}{\Gamma(Q/2+1)}\Lambda^{Q/2}
		\qquad(\Lambda\to\infty),
	\end{equation*}
	by the Weyl law for positive Rockland operators \cite[Corollary~3]{DaveHaller}.
	
	We now analyze the global remainder terms. Define
	\begin{equation*}
		\det\nolimits_\zeta'(-\Delta_{\pLie})
		:=\exp\bigl(-\zeta_{\pLie}'(0)\bigr).
	\end{equation*}
	Let
	\[
	\gamma_{\mathrm E}
	:=\lim_{n\to\infty}\left(\sum_{k=1}^n\frac1k-\log n\right)
	\]
	be the Euler-Mascheroni constant. Set
	\begin{align*}
		\mathcal G_{G,L}
		&:=\int_0^1
		\frac{p_t(e)-C_{G,L}t^{-Q/2}e^{\beta_{G,L}t}}{t}\,\mathrm dt+\int_1^\infty\frac{p_t(e)-1}{t}\,\mathrm dt.
	\end{align*}
	Both integrals converge absolutely. For $a\ge0$, $\beta\in\R$, and $0<\tau\le1$, put
	\begin{equation*}
		\mathscr A_{a,\tau}(\beta):=
		\begin{cases}
			\displaystyle
			\sum_{\ell=0}^{\infty}
			\frac{\beta^\ell\tau^{\ell-a}}{\ell!(\ell-a)},
			& a\notin\mathbb N_0,\\[3mm]
			\displaystyle
			\sum_{\substack{\ell\ge0\\\ell\ne a}}
			\frac{\beta^\ell\tau^{\ell-a}}{\ell!(\ell-a)}
			+\frac{\beta^a}{a!}(\gamma_{\mathrm E}+\log\tau),
			& a\in\mathbb N_0.
		\end{cases}
	\end{equation*}
	Equivalently, after meromorphic continuation in $s$,
	\begin{equation}\label{zeta:local-mellin-formula}
		\mathscr A_{a,\tau}(\beta)
		=\left.\frac{\mathrm d}{\mathrm ds}\right|_{s=0}
		\left[
		\frac1{\Gamma(s)}\int_0^\tau t^{s-a-1}e^{\beta t}\,\mathrm dt
		\right].
	\end{equation}
	Thus $\mathscr A_{a,\tau}(\beta)$ is obtained by differentiating at $s=0$ the meromorphic continuation of the expression in brackets; the second line above is precisely the resonant case.
	
	\begin{proposition}
		For every compact connected two-step pair,
		\begin{equation}\label{zeta:local-global-determinant-formula}
			\log\det\nolimits_\zeta'(-\Delta_{\pLie})
			=\gamma_{\mathrm E}
			-C_{G,L}\mathscr A_{Q/2,1}(\beta_{G,L})
			-\mathcal G_{G,L}.
		\end{equation}
		Equivalently,
		\begin{equation}\label{zeta:finite-part-determinant}
			\log\det\nolimits_\zeta'(-\Delta_{\pLie})
			=-\operatorname*{FP}_{\varepsilon\downarrow0}
			\int_\varepsilon^\infty\frac{p_t(e)-1}{t}\,\mathrm dt
			-\gamma_{\mathrm E}\zeta_{\pLie}(0),
		\end{equation}
		where $\operatorname{FP}$ denotes the constant term after the power and logarithmic divergences prescribed by \eqref{zeta:complete-local-heat-equation} have been removed.
	\end{proposition}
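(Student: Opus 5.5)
The plan is to differentiate at $s=0$ the split Mellin representation already obtained in the proof of \Cref{zeta:local-zeta-data}, with the splitting point fixed at $t=1$. With $H_0$ and $H_\infty$ the entire Mellin transforms defined there, for $\Re s$ large we have
\[
\zeta_{\pLie}(s)=\frac{H_0(s)+H_\infty(s)}{\Gamma(s)}
+\frac{C_{G,L}}{\Gamma(s)}\int_0^1t^{s-Q/2-1}e^{\beta_{G,L}t}\,\mathrm dt
-\frac{1}{s\Gamma(s)},
\]
and this identity persists after meromorphic continuation, so it may be evaluated near $s=0$.

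We treat the three terms separately. Since $1/\Gamma(s)=s+\gamma_{\mathrm E}s^2+O(s^3)$ and $H_0+H_\infty$ is entire, the derivative at $0$ of the first term is $H_0(0)+H_\infty(0)$. By definition this equals $\mathcal G_{G,L}$; the integrals at $s=0$ converge absolutely because $E_0(t)=O(t^{-N}e^{-c/t})$ near $0$ and $E_\infty$ decays exponentially. The second term is, by \eqref{zeta:local-mellin-formula} with $a=Q/2$ and $\tau=1$, exactly $C_{G,L}$ times the bracket whose $s$-derivative at $0$ defines $\mathscr A_{Q/2,1}(\beta_{G,L})$. The one point to check is that the series definition of $\mathscr A_{a,\tau}$ agrees with \eqref{zeta:local-mellin-formula}, including in the resonant case. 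Termwise, $\frac{1}{\Gamma(s)}\frac{\tau^{s+\ell-a}}{s+\ell-a}$ has $s$-derivative $\tau^{\ell-a}/(\ell-a)$ at $0$ when $\ell\ne a$. When $\ell=a$ the term is $\tau^s/(s\Gamma(s))=\tau^s/\Gamma(s+1)$, whose derivative is $\log\tau+\gamma_{\mathrm E}$. Differentiating termwise is justified by the uniform convergence on compacta shown in \Cref{zeta:local-zeta-data}. The last term is $-1/\Gamma(s+1)$, with derivative $-\psi(1)=\gamma_{\mathrm E}$ at $0$.

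Summing the three contributions gives
\[
\zeta_{\pLie}'(0)=\mathcal G_{G,L}+C_{G,L}\mathscr A_{Q/2,1}(\beta_{G,L})-\gamma_{\mathrm E}.
\]
Negating this proves \eqref{zeta:local-global-determinant-formula}.

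For \eqref{zeta:finite-part-determinant}, we write $\int_\varepsilon^\infty=\int_\varepsilon^1+\int_1^\infty$. On $[\varepsilon,1]$ we subtract and add $C_{G,L}t^{-Q/2}e^{\beta_{G,L}t}$, and the subtracted remainder tends to the $E_0$ part of $\mathcal G_{G,L}$. The added local term is $C_{G,L}\sum_\ell\frac{\beta^\ell}{\ell!}\int_\varepsilon^1t^{\ell-Q/2-1}\,\mathrm dt$. For $\ell\ne Q/2$ each integral equals $\frac{1-\varepsilon^{\ell-Q/2}}{\ell-Q/2}$, and its constant term is $1/(\ell-Q/2)$. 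In the resonant case the integral is $-\log\varepsilon$, with constant term $0$. The finite part is therefore $\mathcal G_{G,L}+C_{G,L}\mathscr A_{Q/2,1}-C_{G,L}\frac{\beta^{Q/2}}{(Q/2)!}\gamma_{\mathrm E}$, where the last term is present only when $Q/2\in\mathbb N_0$.

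Using \eqref{zeta:value-zero-even} and \eqref{zeta:negative-values-odd}, we have $\zeta_{\pLie}(0)=C_{G,L}\beta^{Q/2}/(Q/2)!-1$, or $-1$ in the half-integral case. Hence $-\mathrm{FP}-\gamma_{\mathrm E}\zeta_{\pLie}(0)$ reproduces the right-hand side of \eqref{zeta:local-global-determinant-formula}. The main bookkeeping obstacle is exactly this resonant $\gamma_{\mathrm E}$ and $\log$ term. It must be tracked consistently between the definition of $\mathscr A$, the derivative of $1/\Gamma$, and the convention that FP discards the $\log\varepsilon$ term.
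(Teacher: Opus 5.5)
Your proposal is correct and follows the paper's proof. You split the Mellin integral at $t=1$, read off the contributions $\mathcal G_{G,L}$, $C_{G,L}\mathscr A_{Q/2,1}(\beta_{G,L})$ and $-\gamma_{\mathrm E}$ to $\zeta_{\pLie}'(0)$, and obtain the finite-part form by collecting the same divergences; your explicit tracking of the resonant $\gamma_{\mathrm E}$ term usefully expands the paper's one-line remark. One slip: the derivative of $-1/\Gamma(s+1)$ at $s=0$ is $\psi(1)=-\gamma_{\mathrm E}$, not $-\psi(1)=\gamma_{\mathrm E}$, although your final sum already uses the correct value $-\gamma_{\mathrm E}$.
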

	
	\begin{proof}
		Split \eqref{zeta:mellin-formula} at $t=1$ and subtract $C_{G,L}t^{-Q/2}e^{\beta_{G,L}t}$ on $(0,1)$. The zero mode contribution $-1/\Gamma(s+1)$ is kept separate and evaluated below. The remainder part is
		\[
		\frac1{\Gamma(s)}\left(
		\int_0^1t^{s-1}
		\bigl[p_t(e)-C_{G,L}t^{-Q/2}e^{\beta_{G,L}t}\bigr] \,\mathrm dt
		+\int_1^\infty t^{s-1}[p_t(e)-1] \,\mathrm dt
		\right).
		\]
		The bracket is holomorphic near zero and its value at $s=0$ is $\mathcal G_{G,L}$, so this part contributes $\mathcal G_{G,L}$ to $\zeta_{\pLie}'(0)$. Termwise integration of the subtracted exponential contributes $C_{G,L}\mathscr A_{Q/2,1}(\beta_{G,L})$.
		
		The zero mode term on $(0,1)$ is
		\[
		-\frac1{\Gamma(s)}\int_0^1t^{s-1}\,\mathrm dt
		=-\frac1{s\Gamma(s)}=-\frac1{\Gamma(s+1)}.
		\]
		Using
		\[
		\frac1{\Gamma(s)}=s+\gamma_{\mathrm E}s^2+O(s^3),
		\qquad
		-\frac1{\Gamma(s+1)}=-1-\gamma_{\mathrm E}s+O(s^2),
		\]
		shows that the remainder, local, and zero mode contributions to $\zeta_{\pLie}'(0)$ are respectively $\mathcal G_{G,L}$, $C_{G,L}\mathscr A_{Q/2,1}(\beta_{G,L})$, and $-\gamma_{\mathrm E}$. Taking the negative derivative gives \eqref{zeta:local-global-determinant-formula}; collecting the same singular terms before taking the constant part gives \eqref{zeta:finite-part-determinant}.
	\end{proof}
	
	Formula \eqref{zeta:local-global-determinant-formula} is the basic local-global decomposition. The constants $C_{G,L}$ and $\beta_{G,L}$ determine the subtraction at $t=0$, and $\mathcal G_{G,L}$ contains the finite contribution of the remaining heat trace.
	
	For a finite central covering, the zeta function and determinant decompose into character sectors. Let $\pi:\widetilde G\to G$ be a connected finite central local isometry covering with kernel $F$, and let $\widetilde L^\circ$ be the identity component of $\pi^{-1}(L)$. Right translation by $F$ gives
	\[
	L^2(\widetilde G)=\bigoplus_{\chi\in\widehat F}\mathcal H_\chi,
	\qquad
	\mathcal H_\chi
	:=\{f:f(xz)=\chi(z)^{-1}f(x)\}.
	\]
	Put
	\[
	A_\chi:=(-\widetilde\Delta_{\pLie})|_{\mathcal H_\chi}.
	\]
	For any nonnegative self-adjoint operator $A$ with discrete spectrum, write
	\[
	\zeta_A^{\mathrm{red}}(s):=\sum_{\mu\in\operatorname{Spec}(A),\,\mu>0}\mu^{-s},
	\qquad
	\det\nolimits_\zeta' A
	:=\exp\bigl(-({\zeta_A^{\mathrm{red}}})'(0)\bigr).
	\]
	The sector $A_1$ is unitarily equivalent to $-\Delta_{\pLie}$ on $G$, and the zero eigenspace lies in $\mathcal H_1$. For $\chi\ne1$, the space $\mathcal H_\chi$ contains no constants; the two-step condition identifies the full kernel with the constants, so $A_\chi$ is strictly positive and $\det_\zeta' A_\chi=\det_\zeta A_\chi$. Hence
	\begin{align*}
		\zeta_{-\widetilde\Delta_{\pLie}}^{\mathrm{red}}(s)
		&=\sum_{\chi\in\widehat F}\zeta_{A_\chi}^{\mathrm{red}}(s),\\
		\det\nolimits_\zeta'(-\widetilde\Delta_{\pLie})
		&=\prod_{\chi\in\widehat F}\det\nolimits_\zeta' A_\chi
		=\det\nolimits_\zeta'(-\Delta_{\pLie})
		\prod_{\chi\ne1}\det\nolimits_\zeta A_\chi.
	\end{align*}
	The zeta identity holds first on the common half-plane of absolute convergence and then by meromorphic continuation. Thus the determinant of the cover factors into the determinants of its character sectors.
	
	We next give an explicit formula for $\mathcal G_{G,L}$ when $G$ is simply connected. In this case $G$ is semisimple by \Cref{cor:heat-kernel-image-sc}; the lattice representation controls the small-time interval, the Peter-Weyl expansion controls the large-time interval, and a parameter $0<\tau\le1$ joins the two convergent formulas.
	
	Use the root data of $G$ fixed in \Cref{sec:preliminaries}. By \Cref{cor:heat-kernel-image-sc}, $\Lambda_G=2\pi Q_G^\vee$ and the lattice sign is identically one. Let
	\[
	\mathscr O_G:=(\Lambda_G\setminus\{0\})/\Wt_G
	\]
	be the set of nonzero Weyl orbits. For $\mathcal O\in\mathscr O_G$, let $\ell_{\mathcal O}$ be the common length of its elements. Recall from \eqref{zeta:weyl-constant} that $C_L=\vol(L)/(\vol(T)|\Wt_L|)$, and define
	\begin{align}
		a_{\mathcal O}(t)
		&:=C_L\int_{\tLie}\Pi_L(H)^2\mathcal D_{G,L}(H)\times
		\frac{\displaystyle\sum_{\gamma\in\mathcal O}
			\Pi_G(H-i\gamma)e^{-i\langle\gamma,H\rangle/(2t)}}
		{\Pi_G(H)}\,\mathrm dH.
		\label{zeta:orbit-coefficient}
	\end{align}
	
	\begin{proposition}
		
		The quotient in \eqref{zeta:orbit-coefficient} extends real analytically across the root hyperplanes. For each fixed $t>0$, the orbit-grouped image series converges absolutely and uniformly on compact subsets of the complex Cartan and is absolutely integrable after the Cartan reduction. There are integers $A,N\ge0$, depending only on $G$, and $C>0$ such that
		\begin{equation}\label{zeta:orbit-coefficient-bound}
			|a_{\mathcal O}(t)|
			\le Ct^{-N}(1+\ell_{\mathcal O})^A,
			\qquad 0<t\le1.
		\end{equation}
		Moreover,
		\begin{equation}\label{zeta:orbit-conjugacy}
			a_{-\mathcal O}(t)=\overline{a_{\mathcal O}(t)}.
		\end{equation}
		After the Mellin transform on $(0,\tau]$, the orbit series and all of its $s$-derivatives converge locally uniformly on $\C$.
	\end{proposition}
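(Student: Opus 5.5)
The plan is to treat the orbit quotient in \eqref{zeta:orbit-coefficient} as a Weyl anti-invariant entire function divided by $\Pi_G$, and then to reuse the orbit estimate \eqref{zeta:ambient-orbit-bound} together with the decay \eqref{zeta:relative-decay}.

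\emph{Real-analytic extension.} Fix $t>0$ and a Weyl orbit $\mathcal O$. Since $G$ is simply connected, the sign $\varepsilon_G$ is identically one by \Cref{cor:heat-kernel-image-sc}. Consider the numerator
\[
N_{\mathcal O}(Z)=\sum_{\gamma\in\mathcal O}\Pi_G(Z-i\gamma)e^{-i\langle\gamma,Z\rangle/(2t)}
\]
as a function of $Z\in\tLie_{G,\C}$. It is entire. It is also Weyl anti-invariant: for $w\in\Wt_G$ we have $\Pi_G(wZ-i\gamma)=\det(w)\Pi_G(Z-iw^{-1}\gamma)$ and $\langle\gamma,wZ\rangle=\langle w^{-1}\gamma,Z\rangle$, and $w^{-1}$ permutes $\mathcal O$. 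The division argument around \eqref{zeta:single-root-division} then shows that $N_{\mathcal O}/\Pi_G$ is entire. Its restriction to the real slice, and in particular to $\tLie\subset\tLie_G$, is therefore real analytic.

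\emph{Convergence of the orbit-grouped series.} The series is obtained from \Cref{thm:heat-kernel-image} at $Z=iH$ after multiplying by $j_G^{\nc}(H)^{1/2}e^{-|H|^2/(4t)}$. Completing the square turns $e^{-|iH+\gamma|^2/(4t)}e^{|H|^2/(4t)}$ into $e^{-|\gamma|^2/(4t)}e^{-i\langle\gamma,H\rangle/(2t)}$, and $\Pi_G(iH+\gamma)$ becomes $i^{|R^+_G|}\Pi_G(H-i\gamma)$. The factors $\mathscr S_G(iH)$ and $j^{\nc}_G(H)^{1/2}$ combine to $\Pi_G(iH)$, up to the same power of $i$, so the constants cancel. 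The orbit term is then $e^{-\ell_{\mathcal O}^2/(4t)}N_{\mathcal O}(H)/\Pi_G(H)$.

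For absolute and locally uniform convergence on compact sets $S$ of the complex Cartan, apply the complex form of \eqref{zeta:weyl-division-bound}. On a slightly larger compact neighbourhood, the derivatives of $N_{\mathcal O}$ of order at most $|R^+_G|$ are bounded by $C_{S,t}(1+\ell_{\mathcal O})^{A}e^{C_S\ell_{\mathcal O}/t}$. Against the Gaussian $e^{-\ell_{\mathcal O}^2/(4t)}$ and polynomial lattice counting, the sum converges.

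For integrability after the Cartan reduction, use \eqref{zeta:ambient-orbit-bound} on real $H$. It gives $|N_{\mathcal O}/\Pi_G|\le Ct^{-N}(1+|H|+\ell_{\mathcal O})^A$. The weight $\Pi_L(H)^2\mathcal D_{G,L}(H)$ decays exponentially by \eqref{zeta:relative-decay}, so Tonelli yields absolute integrability. The same bound proves \eqref{zeta:orbit-coefficient-bound} directly: expand $(1+|H|+\ell)^A\le C(1+|H|)^A(1+\ell)^A$ and integrate the finite moments of $\mathcal D_{G,L}$.

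\emph{Conjugation symmetry \eqref{zeta:orbit-conjugacy}.} On real $H$, complex conjugation sends $\Pi_G(H-i\gamma)e^{-i\langle\gamma,H\rangle/(2t)}$ to $\Pi_G(H+i\gamma)e^{i\langle\gamma,H\rangle/(2t)}$, which is the term for $-\gamma\in-\mathcal O$. The weights $\Pi_L^2$, $\mathcal D_{G,L}$ and $\Pi_G(H)$ are real, and $C_L>0$. Hence $\overline{a_{\mathcal O}}=a_{-\mathcal O}$.

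\emph{Mellin step.} The main obstacle is the Mellin transform on $(0,\tau]$, which requires uniformity in $t$ down to $0$. Here I would keep the Gaussian factor: the lattice contribution is $\sum_{\mathcal O}e^{-\ell_{\mathcal O}^2/(4t)}a_{\mathcal O}(t)$. By \eqref{zeta:orbit-coefficient-bound} each term is at most $Ct^{-N}(1+\ell_{\mathcal O})^Ae^{-\ell_{\mathcal O}^2/(4t)}$. The bound $t^{-N}e^{-\ell^2/(8t)}\le C_N\ell^{-2N}$ absorbs the negative power of $t$, and the remaining $e^{-\ell^2/(8t)}\le e^{-\ell^2/8}$ for $t\le1$. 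So for $s$ in a compact set $S$, $|t^{s-1}|\,|\log t|^k$ times each term is dominated by $C_{S,k}(1+\ell_{\mathcal O})^{A}e^{-\ell_{\mathcal O}^2/16}e^{-\lambda_*^2/(16t)}t^{-M}$. This is integrable on $(0,\tau]$ and summable over orbits by lattice counting. Dominated convergence then gives locally uniform convergence on $\C$ of the series and of all its $s$-derivatives, each term being entire in $s$ since the integrand vanishes to infinite order at $t=0$. The delicate point to check is that the constant $N$ in \eqref{zeta:ambient-orbit-bound} is uniform in the orbit, and this follows from the finite cardinality of Weyl orbits.
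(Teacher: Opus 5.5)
Your proposal is correct and follows essentially the same route as the paper. You establish Weyl anti-invariance of the orbit numerator and use the iterated root division \eqref{zeta:single-root-division} both for the analytic extension and for the uniform bound \eqref{zeta:orbit-coefficient-bound}. You get \eqref{zeta:orbit-conjugacy} by conjugating term by term, absorb the $e^{C_S\ell_{\mathcal O}/t}$ growth on complex compacts into the lattice Gaussian, and close the Mellin step with a dominated-convergence majorant carrying an $e^{-c/t}$ factor. The only differences are cosmetic: you derive the orbit term from the image formula explicitly, which the paper defers to the next theorem, and you split the Gaussian slightly differently in the Mellin majorant.
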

	
	\begin{proof}
		Write the numerator in \eqref{zeta:orbit-coefficient} as
		\[
		N_{\mathcal O,t}(H)
		:=\sum_{\gamma\in\mathcal O}
		\Pi_G(H-i\gamma)e^{-i\langle\gamma,H\rangle/(2t)}.
		\]
		For $w\in\Wt_G$, replace $\gamma$ by $w\gamma$ in the sum. Weyl invariance of the orbit and inner product, together with $\Pi_G(wX)=\det(w)\Pi_G(X)$, gives
		\[
		N_{\mathcal O,t}(wH)=\det(w)N_{\mathcal O,t}(H).
		\]
		Thus the numerator is Weyl anti-invariant and its quotient by $\Pi_G$ extends across the root hyperplanes. For real $H$, the factors $\Pi_L(H)^2\mathcal D_{G,L}(H)$ and $\Pi_G(H)$ are real, while
		\[
		N_{-\mathcal O,t}(H)=\overline{N_{\mathcal O,t}(H)}.
		\]
		Integration proves \eqref{zeta:orbit-conjugacy}.
		
		Every derivative of $N_{\mathcal O,t}$ either differentiates the polynomial $\Pi_G(H-i\gamma)$ or the phase, in the latter case producing a factor $\gamma/(2t)$. Since an orbit has at most $|\Wt_G|$ elements, \eqref{zeta:weyl-division-bound} therefore gives, on the real Cartan,
		\[
		\left|\frac{N_{\mathcal O,t}(H)}{\Pi_G(H)}\right|
		\le Ct^{-N}(1+|H|+\ell_{\mathcal O})^A
		\]
		for integers $A,N$ depending only on $G$. The exponential decay in \eqref{zeta:relative-decay} absorbs the polynomial in $H$, so integration against $\Pi_L(H)^2\mathcal D_{G,L}(H)$ proves \eqref{zeta:orbit-coefficient-bound}. On a compact subset $S$ of the complexified Cartan subalgebra $\tLie_{G,\C}$, the same iterated Weyl division calculation gives
		\[
		C_S t^{-N}(1+\ell_{\mathcal O})^A
		e^{C_S\ell_{\mathcal O}/t}.
		\]
		After multiplication by the lattice Gaussian, the exponent satisfies
		\[
		-\frac{\ell_{\mathcal O}^2}{4t}
		+\frac{C_S\ell_{\mathcal O}}{t}
		\le -\frac{\ell_{\mathcal O}^2}{8t}
		\qquad\text{when }\ell_{\mathcal O}\ge8C_S.
		\]
		Only finitely many orbits fail this inequality. Polynomial lattice counting therefore gives uniform convergence on $S$ for each fixed $t>0$.
		
		Finally, let $S_0\subset\C$ be compact, put $\sigma:=\min_{s\in S_0}\Re s$, and fix $q\ge0$. On the real Cartan the complex growth factor is absent. Using \eqref{zeta:orbit-coefficient-bound}, polynomial lattice counting, and the shortest nonzero lattice length, the sum of the $q$-times differentiated Mellin integrands is bounded by
		\begin{equation}\label{zeta:lattice-mellin-majorant}
			C_{S_0,q}\,
			t^{\sigma-Q/2-N-1}(1+|\log t|)^q e^{-c/t},
			\qquad 0<t\le\tau,
		\end{equation}
		which is integrable at zero. Dominated convergence, uniformly for $s\in S_0$, proves local uniform convergence and justifies every termwise $s$-derivative.
	\end{proof}
	
	Pair the nonzero Weyl orbits under inversion and write
	\[
	\mathscr P_G:=\mathscr O_G/(\mathcal O\sim-\mathcal O).
	\]
	For $\mathcal P\in\mathscr P_G$, choose a representative $\mathcal O\in\mathcal P$, set $\ell_{\mathcal P}:=\ell_{\mathcal O}$, and define
	\begin{equation}\label{zeta:paired-orbit-coefficient}
		b_{\mathcal P}(t):=
		\begin{cases}
			a_{\mathcal O}(t),&\mathcal O=-\mathcal O,\\
			a_{\mathcal O}(t)+a_{-\mathcal O}(t),&\mathcal O\ne-\mathcal O.
		\end{cases}
	\end{equation}
	The definition is independent of the representative, $b_{\mathcal P}(t)\in\mathbb R$, and
	\[
	|b_{\mathcal P}(t)|
	\le 2Ct^{-N}(1+\ell_{\mathcal P})^A.
	\]
	
	\begin{theorem}
		Under the simply connected hypotheses above, for every $t>0$,
		\begin{align}
			p_t(e)
			&=C_{G,L}t^{-Q/2}e^{\beta_{G,L}t}
			+\frac{\vol(G)}{(4\pi)^{Q/2}}t^{-Q/2}e^{\beta_{G,L}t}
			\sum_{\mathcal P\in\mathscr P_G}
			e^{-\ell_{\mathcal P}^2/(4t)}b_{\mathcal P}(t).
			\label{zeta:exact-heat-formula}
		\end{align}
		The series is real and absolutely convergent; more precisely,
		\[
		\sum_{\mathcal P\in\mathscr P_G}
		e^{-\ell_{\mathcal P}^2/(4t)}|b_{\mathcal P}(t)|<\infty.
		\]
	\end{theorem}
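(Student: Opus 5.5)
Start from \Cref{thm:integral-formula} at $g=e$ and substitute the simply connected image formula \Cref{cor:heat-kernel-image-sc} with $Z=iY$. Then reduce to the Cartan subalgebra with \Cref{thm:cartan-general}, using $\mathcal O_{e,t}(H)=q_t^{\C}(e^{iH})$. For $H\in\tLie\subset\tLie_G$ the lattice term indexed by $\gamma$ carries
\[
\Pi_G(iH+\gamma)e^{-\langle iH+\gamma,iH+\gamma\rangle/(4t)}
=i^{|R_G^+|}\Pi_G(H-i\gamma)\,e^{|H|^2/(4t)}e^{-|\gamma|^2/(4t)}e^{-i\langle\gamma,H\rangle/(2t)}.
\]
The denominator is $\mathscr S_G(iH)=i^{|R_G^+|}\Pi_G(H)\,j_G^{\nc}(H)^{1/2}$, by \eqref{eq:ambient-jacobian-square-root} and the hyperbolic sine form of the Jacobian. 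The powers of $i$ cancel. The factor $e^{|H|^2/(4t)}$ cancels the Hall Gaussian, and $j_L^{\nc}(H)^{1/2}/j_G^{\nc}(H)^{1/2}=\mathcal D_{G,L}(H)$. The prefactors $\vol(G)e^{t|\rho_G|^2}(4\pi t)^{-\dim G/2}$ and $e^{-t|\rho_L|^2}(4\pi t)^{-\dim L/2}$ combine to $\vol(G)(4\pi)^{-Q/2}t^{-Q/2}e^{\beta_{G,L}t}$. Formally, then, $p_t(e)$ equals this prefactor times $\sum_{\gamma\in\Lambda_G}e^{-|\gamma|^2/(4t)}$ times the Cartan integral of $C_L\Pi_L(H)^2\mathcal D_{G,L}(H)\Pi_G(H-i\gamma)e^{-i\langle\gamma,H\rangle/(2t)}/\Pi_G(H)$.

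Next, group the lattice sum into Weyl orbits. The $\gamma=0$ term, by the Weyl formula used for \eqref{zeta:root-data-constant}, gives $C_L\int_{\tLie}\Pi_L^2\mathcal D_{G,L}\,\mathrm dH=\int_{\lLie}\mathcal D_{G,L}$, and hence $C_{G,L}t^{-Q/2}e^{\beta_{G,L}t}$. Each nonzero orbit $\mathcal O$ gives exactly $e^{-\ell_{\mathcal O}^2/(4t)}a_{\mathcal O}(t)$ with $a_{\mathcal O}$ as in \eqref{zeta:orbit-coefficient}. Pairing $\mathcal O$ with $-\mathcal O$ produces $b_{\mathcal P}(t)$, and \eqref{zeta:orbit-conjugacy} shows $b_{\mathcal P}(t)\in\R$. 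The sum is then real, as it must be since $p_t(e)>0$.

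The main obstacle is justifying the exchange of the lattice sum with the $H$-integral, because individual terms $\Pi_G(H-i\gamma)/\Pi_G(H)$ are singular on root hyperplanes. Only orbit sums are regular. I would therefore exchange only after grouping by orbits: the image series converges as a sum of orbit-grouped quotients by the preceding proposition. To apply Fubini I need a majorant. The iterated division bound \eqref{zeta:weyl-division-bound}, applied to each orbit numerator for fixed $t>0$ (not only $t\le1$; the constant then depends on $t$), gives $|N_{\mathcal O,t}(H)/\Pi_G(H)|\le C_t(1+|H|+\ell_{\mathcal O})^A$. Multiplying by $\Pi_L^2\mathcal D_{G,L}$, which decays exponentially by \eqref{zeta:relative-decay}, and by $e^{-\ell_{\mathcal O}^2/(4t)}$, then summing with polynomial lattice counting, yields an integrable dominating function on $\tLie\times\mathscr O_G$. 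Tonelli/Fubini then justifies the interchange. The same bound gives $\sum_{\mathcal P}e^{-\ell_{\mathcal P}^2/(4t)}|b_{\mathcal P}(t)|<\infty$, which is the asserted absolute convergence.

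Finally, the pointwise identity of the integrand holds off root hyperplanes, a null set, and extends by the removable continuation of \Cref{thm:heat-kernel-image}. This completes \eqref{zeta:exact-heat-formula} for every $t>0$.
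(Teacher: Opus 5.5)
Your proposal is correct and is essentially the paper's own argument. The paper likewise inserts the simply connected image formula at $Z=iH$ into the Hall inversion formula at $g=e$, reduces to $\tLie$ by Weyl integration, groups the lattice into Weyl orbits (the zero orbit giving $C_{G,L}t^{-Q/2}e^{\beta_{G,L}t}$), and pairs $\mathcal O$ with $-\mathcal O$ via \eqref{zeta:orbit-conjugacy}.

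Your explicit Fubini majorant for each fixed $t>0$ fills in what the paper leaves implicit; note that its bound \eqref{zeta:orbit-coefficient-bound} is stated only for $t\le1$. The one phrase to adjust is the ``null set'' remark. The torus $\tLie$ can lie entirely inside a root hyperplane of $G$, for example when $L$ is the circle through $\diag(i,i,-2i)$ in $\SU(3)$. So the integrand identity should first be proved on all of $\tLie_G$, using continuity of the locally uniformly convergent orbit series, and only then restricted to $\tLie$.
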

	
	\begin{proof}
		Specialize \eqref{eq:ambient-complex-image-sc} to $Z=iH$, multiply by $j_G^{\nc}(H)^{1/2}e^{-|H|^2/(4t)}$, and first group the nonzero lattice vectors by Weyl orbit. Pairing each orbit with its negative through \eqref{zeta:orbit-conjugacy} gives the real coefficients \eqref{zeta:paired-orbit-coefficient}. Inserting the resulting identity into \Cref{thm:integral-formula} and applying \eqref{eq:weyl-lie} proves \eqref{zeta:exact-heat-formula}. The zero orbit gives the first term, and \eqref{zeta:orbit-coefficient-bound} together with polynomial lattice counting proves absolute convergence.
	\end{proof}
	
	For $\lambda\in\widehat G$, write $V_\lambda$ for the irreducible $G$-module and $d_\lambda=\dim V_\lambda$. Its restriction to $L$ is
	\[
	V_\lambda|_L
	\cong\bigoplus_{\nu\in\widehat L}
	\mathbb C^{m_{\lambda,\nu}}\otimes V_\nu,
	\qquad
	m_{\lambda,\nu}:=\dim\Hom_L(V_\nu,V_\lambda),
	\]
	where $d_\nu=\dim V_\nu$. With
	\[
	\Omega_G:=\sum_{a=1}^rY_a^2+\sum_{i=1}^mX_i^2,
	\qquad
	\Omega_L:=\sum_{a=1}^rY_a^2,
	\]
	normalize the nonnegative Casimir eigenvalues by
	\[
	\mathrm d\pi_\lambda(\Omega_G)=-c_G(\lambda)I,
	\qquad
	\mathrm d\pi_\nu(\Omega_L)=-c_L(\nu)I.
	\]
	On the block $\mathbb C^{m_{\lambda,\nu}}\otimes V_\nu$, the operator $-\Delta_{\pLie}$ has eigenvalue
	\[
	\Lambda_{\lambda,\nu}:=c_G(\lambda)-c_L(\nu)\ge0.
	\]
	Peter-Weyl contributes the additional multiplicity $d_\lambda$, so the scalar multiplicity of this block is $d_\lambda m_{\lambda,\nu}d_\nu$. Consequently,
	\begin{equation}\label{zeta:spectral-heat-trace}
		p_t(e)-1
		=\sum_{\lambda\in\widehat G}
		\sum_{\nu\in\widehat L}^{\prime}
		d_\lambda m_{\lambda,\nu}d_\nu
		e^{-t\Lambda_{\lambda,\nu}}.
	\end{equation}
	Here the prime removes the unique term with $\Lambda_{\lambda,\nu}=0$. All summands are nonnegative, and \eqref{zeta:heat-trace-identity} gives absolute convergence. The two-step condition makes the constant functions the whole kernel, so this is precisely the unique trivial block. Let
	\[
	\Gamma(s,x):=\int_x^\infty u^{s-1}e^{-u}\,\mathrm du
	\]
	denote the upper incomplete Gamma function. For every compact set $S\subset\C$, integer $q\ge0$, and $x_0>0$,
	\begin{equation}\label{zeta:incomplete-gamma-bound}
		\sup_{s\in S}
		\left|
		\partial_s^q\!\left(x^{-s}\Gamma(s,x)\right)
		\right|
		\le C_{S,q,x_0}e^{-x},
		\qquad x\ge x_0.
	\end{equation}
	Indeed, $x^{-s}\Gamma(s,x)=\int_1^\infty v^{s-1}e^{-xv}\,\mathrm dv$, and differentiation under the integral proves the bound.
	
	Splitting \eqref{zeta:mellin-formula} at $\tau$, using \eqref{zeta:exact-heat-formula} below $\tau$, and using \eqref{zeta:spectral-heat-trace} above $\tau$ yields
	\begin{align}
		\zeta_{\pLie}(s)
		&=\frac{C_{G,L}}{\Gamma(s)}
		\int_0^\tau t^{s-Q/2-1}e^{\beta_{G,L}t}\,\mathrm dt
		-\frac{\tau^s}{\Gamma(s+1)}
		\notag\\
		&\quad+\frac{\vol(G)}{(4\pi)^{Q/2}\Gamma(s)}
		\sum_{\mathcal P\in\mathscr P_G}
		\int_0^\tau t^{s-Q/2-1}
		e^{\beta_{G,L}t-\ell_{\mathcal P}^2/(4t)}
		b_{\mathcal P}(t)\,\mathrm dt
		\notag\\
		&\quad+\frac1{\Gamma(s)}
		\sum_{\lambda\in\widehat G}
		\sum_{\nu\in\widehat L}^{\prime}
		d_\lambda m_{\lambda,\nu}d_\nu
		\Lambda_{\lambda,\nu}^{-s}
		\Gamma(s,\tau\Lambda_{\lambda,\nu}).
		\label{zeta:global-splitting}
	\end{align}
	Initially \eqref{zeta:global-splitting} holds for $\Re s>Q/2$. The first term has the meromorphic expansion
	\[
	\frac{C_{G,L}}{\Gamma(s)}
	\sum_{\ell\ge0}
	\frac{\beta_{G,L}^{\ell}\tau^{s-Q/2+\ell}}
	{\ell!\,(s-Q/2+\ell)},
	\]
	and consequently its difference from $\zeta_{\pLie}(s)$ extends to an entire function, as in \eqref{zeta:polar-model}. The zero mode term is entire, and the lattice term is entire by \eqref{zeta:lattice-mellin-majorant}. For the spectral term, let $\Lambda_1>0$ be the first positive eigenvalue. Writing
	\[
	\Lambda^{-s}\Gamma(s,\tau\Lambda)
	=\tau^s(\tau\Lambda)^{-s}\Gamma(s,\tau\Lambda)
	\]
	and applying \eqref{zeta:incomplete-gamma-bound} shows that every fixed number of $s$-derivatives is bounded, uniformly on compact $s$-sets, by a constant times $e^{-\tau\Lambda}$. The resulting summable majorant is exactly
	\begin{equation}\label{zeta:spectral-derivative-majorant}
		\sum_{\lambda\in\widehat G}
		\sum_{\nu\in\widehat L}^{\prime}
		d_\lambda m_{\lambda,\nu}d_\nu e^{-\tau\Lambda_{\lambda,\nu}}
		=p_\tau(e)-1<\infty.
	\end{equation}
	Thus, for every $q\ge0$, the spectral series and its first $q$ $s$-derivatives converge uniformly on compact subsets; in particular, the series defines an entire function. Equation \eqref{zeta:global-splitting} is therefore an exact continuation formula; its derivation from a split integral also makes it independent of $\tau$.
	
	Let
	\[
	E_1(x):=\Gamma(0,x)=\int_x^\infty\frac{e^{-u}}u\,\mathrm du
	\]
	denote the classical exponential integral. Define the two convergent remainders
	\begin{align}
		\mathcal R_{\mathrm{lat}}(\tau)
		&:=\frac{\vol(G)}{(4\pi)^{Q/2}}
		\sum_{\mathcal P\in\mathscr P_G}
		\int_0^\tau t^{-Q/2-1}
		e^{\beta_{G,L}t-\ell_{\mathcal P}^2/(4t)}
		b_{\mathcal P}(t)\,\mathrm dt,
		\label{zeta:lattice-determinant-term}\\
		\mathcal R_{\mathrm{spec}}(\tau)
		&:=\sum_{\lambda\in\widehat G}
		\sum_{\nu\in\widehat L}^{\prime}
		d_\lambda m_{\lambda,\nu}d_\nu
		E_1(\tau\Lambda_{\lambda,\nu}).
		\label{zeta:spectral-determinant-term}
	\end{align}
	\begin{theorem}\label{zeta:exact-determinant}
		If $G$ is simply connected, then, for every $0<\tau\le1$,
		\begin{align}
			\log\det\nolimits_\zeta'(-\Delta_{\pLie})
			&=\gamma_{\mathrm E}+\log\tau
			-C_{G,L}\mathscr A_{Q/2,\tau}(\beta_{G,L})
			\notag\\
			&\quad-\mathcal R_{\mathrm{lat}}(\tau)
			-\mathcal R_{\mathrm{spec}}(\tau).
			\label{zeta:exact-determinant-formula}
		\end{align}
		Every term on the right converges absolutely, and the expression is independent of $\tau$.
	\end{theorem}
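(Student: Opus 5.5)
The plan is to differentiate the exact continuation formula \eqref{zeta:global-splitting} at $s=0$ term by term, using $\log\det\nolimits_\zeta'(-\Delta_{\pLie})=-\zeta_{\pLie}'(0)$. That formula is valid on all of $\C$ after meromorphic continuation and is independent of $\tau$. The four summands on its right-hand side are treated separately, and the last three are entire near $s=0$.

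\textbf{Local term.} By \eqref{zeta:local-mellin-formula} with $a=Q/2$ and $\beta=\beta_{G,L}$, its $s$-derivative at $0$ is exactly $C_{G,L}\mathscr A_{Q/2,\tau}(\beta_{G,L})$. I would still check the resonant case $Q/2\in\mathbb N_0$. There, the summand $\ell=Q/2$ equals $\beta^{a}\tau^s/(a!\,s\,\Gamma(s))=\beta^a\tau^s/(a!\,\Gamma(s+1))$. Its derivative at $0$ is $\frac{\beta^a}{a!}(\log\tau+\gamma_{\mathrm E})$, which matches the second line of the definition of $\mathscr A$. Every other summand has the form $\frac1{\Gamma(s)}f_\ell(s)$ with $f_\ell$ holomorphic at $0$, so its derivative is $f_\ell(0)=\beta^\ell\tau^{\ell-a}/(\ell!(\ell-a))$. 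Exchanging the derivative with the sum is justified by the factorial decay, as in \Cref{zeta:local-zeta-data}.

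\textbf{Zero-mode term.} The derivative of $-\tau^s/\Gamma(s+1)$ at $0$ is $-(\log\tau+\gamma_{\mathrm E})$. With the overall minus sign this produces the term $\gamma_{\mathrm E}+\log\tau$.

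\textbf{Lattice and spectral terms.} Each has the form $\frac1{\Gamma(s)}F(s)$ with $F$ entire, so the derivative at $0$ is $F(0)$, using $1/\Gamma(s)=s+O(s^2)$. For the lattice term, $F(0)$ is the series \eqref{zeta:lattice-determinant-term}, which converges by the majorant \eqref{zeta:lattice-mellin-majorant}. For the spectral term, $\Lambda^{-s}\Gamma(s,\tau\Lambda)$ at $s=0$ is $E_1(\tau\Lambda)$, so $F(0)=\mathcal R_{\mathrm{spec}}(\tau)$. This series converges because $E_1(x)\le x^{-1}e^{-x}\le(\tau\Lambda_1)^{-1}e^{-x}$ for $x\ge\tau\Lambda_1$, which reduces it to the majorant \eqref{zeta:spectral-derivative-majorant}.

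Collecting the four contributions with the overall minus sign gives \eqref{zeta:exact-determinant-formula}. Independence of $\tau$ follows because the left-hand side does not involve $\tau$. It can also be seen directly: the split identity \eqref{zeta:global-splitting} holds for each $\tau$, and its continuation is unique.

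\textbf{Main obstacle.} The step needing care is justifying termwise evaluation at $s=0$ for the lattice series. This requires the locally uniform convergence, including the $s$-derivatives, proved in the orbit proposition via \eqref{zeta:lattice-mellin-majorant}. It also requires that \eqref{zeta:exact-heat-formula} was used on $(0,\tau]$ with real coefficients $b_{\mathcal P}$. Given those earlier estimates, dominated convergence finishes the argument.
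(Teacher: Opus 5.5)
Your proposal is correct and follows the paper's proof essentially step for step. Both arguments differentiate the continuation formula \eqref{zeta:global-splitting} at $s=0$, handle the resonant local summand through $\tau^s/\Gamma(s+1)$, use $1/\Gamma(s)=s+O(s^2)$ to read off $\mathcal R_{\mathrm{lat}}(\tau)$ and $\mathcal R_{\mathrm{spec}}(\tau)$, and justify termwise differentiation with the majorants \eqref{zeta:lattice-mellin-majorant} and \eqref{zeta:spectral-derivative-majorant}.
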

	
	\begin{proof}
		The uniform convergence on compact subsets established above permits differentiation of \eqref{zeta:global-splitting} at $s=0$. For a nonresonant index $\ell\ne Q/2$, the derivative of the corresponding local summand is
		\[
		\left.\frac{\mathrm d}{\mathrm ds}\right|_{s=0}
		\frac1{\Gamma(s)}
		\frac{\beta_{G,L}^{\ell}\tau^{s-Q/2+\ell}}
		{\ell!\,(s-Q/2+\ell)}
		=\frac{\beta_{G,L}^{\ell}\tau^{\ell-Q/2}}
		{\ell!\,(\ell-Q/2)}.
		\]
		If $Q/2\in\mathbb N_0$, the resonant summand $\ell=Q/2$ instead uses
		\begin{equation*}
			\left.\frac{\mathrm d}{\mathrm ds}\right|_{s=0}
			\frac{\tau^s}{s\Gamma(s)}
			=\gamma_{\mathrm E}+\log\tau.
		\end{equation*}
		Thus the local derivative is $C_{G,L}\mathscr A_{Q/2,\tau}(\beta_{G,L})$.
		
		Since $1/\Gamma(s)=s+O(s^2)$, the lattice and spectral derivatives are \eqref{zeta:lattice-determinant-term} and \eqref{zeta:spectral-determinant-term}; their termwise differentiation is justified by \eqref{zeta:lattice-mellin-majorant} and \eqref{zeta:spectral-derivative-majorant}. Finally, the zero mode deletion on $(0,\tau)$ is
		\[
		-\frac1{\Gamma(s)}\int_0^\tau t^{s-1}\,\mathrm dt
		=-\frac{\tau^s}{\Gamma(s+1)}.
		\]
		Its derivative at zero is $-(\gamma_{\mathrm E}+\log\tau)$, so it contributes $-(\gamma_{\mathrm E}+\log\tau)$ to $\zeta_{\pLie}'(0)$. Negating the derivative proves \eqref{zeta:exact-determinant-formula}.
	\end{proof}
	
	These estimates give quantitative truncation errors. There are constants $C>0$ and $N_1\ge0$, depending only on $(G,L)$, such that, uniformly for $0<\tau\le1$ and all truncation levels $R_{\mathrm{lat}},R_{\mathrm{spec}}\ge0$, retaining the paired orbits with $\ell_{\mathcal P}\le R_{\mathrm{lat}}$ changes \eqref{zeta:lattice-determinant-term} by at most
	\begin{equation}\label{zeta:lattice-tail}
		C\tau^{-Q/2-N_1}
		\exp\!\left(-\frac{R_{\mathrm{lat}}^2}{8\tau}\right),
	\end{equation}
	and retaining the spectral terms with $\Lambda_{\lambda,\nu}\le R_{\mathrm{spec}}$ changes \eqref{zeta:spectral-determinant-term} by at most
	\begin{equation}\label{zeta:spectral-tail}
		C\tau^{-Q/2-1}e^{-\tau R_{\mathrm{spec}}/2}.
	\end{equation}
	
	To prove \eqref{zeta:lattice-tail}, combine \eqref{zeta:orbit-coefficient-bound} with
	\begin{equation*}
		(1+\ell)^A e^{-\ell^2/(4t)}
		\le C_A t^{-A/2}e^{-\ell^2/(8t)}e^{-\ell^2/(16t)},
		\qquad 0<t\le1.
	\end{equation*}
	For every omitted orbit, the first Gaussian on the right is bounded by $e^{-R_{\mathrm{lat}}^2/(8\tau)}$. Polynomial lattice counting controls the sum of the second Gaussian, while the shortest nonzero lattice length makes the resulting integral at $t=0$ convergent. Increasing the fixed power of $\tau^{-1}$ gives \eqref{zeta:lattice-tail}.
	
	For the spectral tail, let $\Lambda_1>0$ be the first positive eigenvalue. Since $E_1(x)\le e^{-x}/x$, every omitted term satisfies
	\[
	E_1(\tau\Lambda)
	\le \frac{e^{-\tau R_{\mathrm{spec}}/2}}
	{\tau\Lambda_1}e^{-\tau\Lambda/2}.
	\]
	Consequently,
	\begin{align*}
		&\sum_{\substack{\lambda\in\widehat G,\ \nu\in\widehat L\\
				\Lambda_{\lambda,\nu}>R_{\mathrm{spec}}}}
		d_\lambda m_{\lambda,\nu}d_\nu
		E_1(\tau\Lambda_{\lambda,\nu})\le
		\frac{e^{-\tau R_{\mathrm{spec}}/2}}{\tau\Lambda_1}
		\bigl(p_{\tau/2}(e)-1\bigr)
		\le C\tau^{-Q/2-1}e^{-\tau R_{\mathrm{spec}}/2},
	\end{align*}
	which is \eqref{zeta:spectral-tail}. The parameter $\tau$ can therefore be chosen by balancing the exponents $R_{\mathrm{lat}}^2/(8\tau)$ and $\tau R_{\mathrm{spec}}/2$. For positive cutoffs, the balanced choice subject to $0<\tau\le1$ is
	\begin{equation}\label{zeta:balanced-splitting-time}
		\tau_{\mathrm{bal}}
		:=\min\left\{1,\frac{R_{\mathrm{lat}}}
		{2\sqrt{R_{\mathrm{spec}}}}\right\}.
	\end{equation}
	This balances the exponential factors; the displayed polynomial prefactors remain unchanged.
	
	\section{Examples: block families and symmetric specializations}
	\label{sec:examples}
	
	This section specializes the abstract results to block subgroups of $\SU(N)$ and then to two-block compact symmetric pairs. We give the bracket structure, root data, local constants, vertical coefficient, and classical spherical reductions in the same normalizations as the preceding sections.
	
	Consider the block family with an arbitrary connected subtorus in its block scalar factor. In this part set $\theta=\mathrm{id}$, so $K=G=\SU(N)$, and reserve $L$ for the subgroup defining the horizontal complement. The symmetric cases are then obtained by changing the involution and setting $L=K$.
	
	Fix a partition
	\[
	N=n_1+\cdots+n_s,
	\qquad s\ge2,
	\qquad n_a\ge1,
	\]
	and put $\mathbf n=(n_1,\ldots,n_s)$. Define the traceless block scalar algebra and its connected torus by
	\begin{align*}
		\mathfrak z_{\mathbf n}
		&:={}
		\left\{\diag(i\beta_1I_{n_1},\ldots,i\beta_sI_{n_s}):
		\beta_a\in\mathbb R,\ \sum_{a=1}^s n_a\beta_a=0\right\},\\
		Z_{\mathbf n}&:=\exp(\mathfrak z_{\mathbf n}).
	\end{align*}
	Let $T_0\subset Z_{\mathbf n}$ be a closed connected subtorus, with Lie algebra $\mathfrak t_0$, and set
	\begin{equation}\label{eq:examples-block-subgroup}
		L_{\mathbf n,T_0}
		:=\left(\prod_{a=1}^s\SU(n_a)\right)T_0\subset\SU(N),
		\qquad
		\lLie_{\mathbf n,T_0}
		=\left(\bigoplus_{a=1}^s\mathfrak{su}(n_a)\right)
		\oplus\mathfrak t_0.
	\end{equation}
	All metrics below are induced by $\langle X,Y\rangle=-\Tr(XY)$.
	
	Let
	\[
	V_{\mathbf n}
	:=\left\{\beta\in\mathbb R^s:\sum_{a=1}^s n_a\beta_a=0\right\},
	\qquad
	\langle\beta,\gamma\rangle_{\mathbf n}
	:=\sum_{a=1}^s n_a\beta_a\gamma_a,
	\qquad \beta,\gamma\in V_{\mathbf n}\subset\mathbb R^s.
	\]
	Let $V_0\subset V_{\mathbf n}$ correspond to $\mathfrak t_0$. The horizontal space $\pLie=\lLie_{\mathbf n,T_0}^{\perp}\subset\mathfrak{su}(N)$ consists of the matrices
	\begin{equation*}
		X(\beta,Z)=
		\begin{pmatrix}
			i\beta_1I_{n_1}&Z_{12}&\cdots&Z_{1s}\\
			-Z_{12}^*&i\beta_2I_{n_2}&\cdots&Z_{2s}\\
			\vdots&\vdots&\ddots&\vdots\\
			-Z_{1s}^*&-Z_{2s}^*&\cdots&i\beta_sI_{n_s}
		\end{pmatrix},
		\quad
		\beta\in V_0^\perp,\qquad
		Z_{ab}\in M_{n_a\times n_b}(\mathbb C),
	\end{equation*}
	and
	\begin{equation*}
		|X(\beta,Z)|^2
		=\sum_{a=1}^s n_a\beta_a^2
		+2\sum_{a<b}\|Z_{ab}\|_{\mathrm{HS}}^2.
	\end{equation*}
	
	\begin{proposition}
		For every $\mathbf n$ and $T_0$ above, the pair $(\SU(N),L_{\mathbf n,T_0})$ is a two-step pair:
		\[
		\mathfrak{su}(N)=\pLie+[\pLie,\pLie].
		\]
	\end{proposition}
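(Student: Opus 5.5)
It suffices, by the equivalence recorded in \Cref{sec:preliminaries}, to show that $[\pLie,\pLie]$ projects onto $\lLie_{\mathbf n,T_0}$. By metric invariance this is the same as showing that the only $Y\in\lLie_{\mathbf n,T_0}$ orthogonal to $[\pLie,\pLie]$ is $Y=0$. Since $\langle Y,[X,X']\rangle=\langle [Y,X],X'\rangle$, a vector $Y\in\lLie$ is orthogonal to $[\pLie,\pLie]$ exactly when $[Y,\pLie]\perp\pLie$. Because $[\lLie,\pLie]\subset\pLie$ (as $\pLie$ is $\Ad(L)$-stable), this forces $[Y,\pLie]=0$. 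The plan is therefore to prove that no nonzero element of $\lLie_{\mathbf n,T_0}$ commutes with all of $\pLie$. That reduces the claim to a centralizer computation in $\mathfrak{su}(N)$ using only the off-diagonal block matrices in $\pLie$.

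Write $Y=\diag(A_1,\ldots,A_s)+\diag(i\gamma_1I_{n_1},\ldots,i\gamma_sI_{n_s})$ with $A_a\in\mathfrak{su}(n_a)$ and $\gamma\in V_0$. The off-diagonal horizontal matrices with a single nonzero block $Z_{ab}$ (and $-Z_{ab}^*$ below) all lie in $\pLie$, whatever $T_0$ is. Taking the bracket with such a matrix, the $(a,b)$ block of $[Y,X]$ is
\[
(A_a+i\gamma_a)Z_{ab}-Z_{ab}(A_b+i\gamma_b).
\]
This must vanish for every $Z_{ab}\in M_{n_a\times n_b}(\C)$. Choosing $Z_{ab}$ to be the matrix units $E_{kl}$ shows that $A_a+i\gamma_aI$ and $A_b+i\gamma_bI$ are equal scalar multiples of the identity. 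Tracelessness of $A_a$ then gives $A_a=0$ and $\gamma_a=\gamma_b$ for every pair $a<b$; here $s\ge2$ is used.

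It follows that $\gamma$ is constant. The condition $\sum_a n_a\gamma_a=0$ then forces $\gamma=0$, so $Y=0$.

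The only genuinely delicate point is justifying $[Y,\pLie]\subset\pLie$, so that orthogonality can be upgraded to vanishing. I will argue it directly: $\pLie=\lLie^\perp$ and $\lLie$ is a subalgebra, so for $Y,Y'\in\lLie$ and $X\in\pLie$ we have $\langle [Y,X],Y'\rangle=-\langle X,[Y,Y']\rangle=0$. Alternatively, one can avoid this step altogether by exhibiting explicit brackets: $[\,\cdot\,]$ of two single-block horizontal matrices with $Z_{ab}=E_{kl}$ and $Z_{ab}=iE_{k'l'}$ produces all elements of $\mathfrak{su}(n_a)$ together with $\diag$ directions $i(e_a-e_b)$-type elements. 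These span $\bigoplus_a\mathfrak{su}(n_a)\oplus\mathfrak z_{\mathbf n}\supset\lLie_{\mathbf n,T_0}$, so $\pLie+[\pLie,\pLie]=\mathfrak{su}(N)$.
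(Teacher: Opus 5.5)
Your proof is correct, but it takes the dual route to the paper's. The paper argues constructively. It brackets the explicit off-diagonal elements $U_{p\mu}=E_{p\mu}-E_{\mu p}$ and $V_{p\mu}=\mathrm i(E_{p\mu}+E_{\mu p})$ to obtain $E_{qp}-E_{pq}$, $\mathrm i(E_{pq}+E_{qp})$, $2\mathrm i(E_{pp}-E_{qq})$ inside each block, and $\sum_{p\in I_a,\mu\in I_b}[U_{p\mu},V_{p\mu}]=2\mathrm i(n_bP_a-n_aP_b)$ across blocks. It concludes that $[\pLie,\pLie]$ already contains the whole traceless block-diagonal algebra $\bigoplus_a\mathfrak{su}(n_a)\oplus\mathfrak z_{\mathbf n}\supseteq\lLie_{\mathbf n,T_0}$. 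Your ``alternative'' paragraph is a sketch of exactly this.

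Your main argument proceeds differently. You use invariance to identify the orthogonal complement of $\pr_{\lLie}[\pLie,\pLie]$ in $\lLie$ with the centralizer of $\pLie$ in $\lLie$; your justification of $[\lLie,\pLie]\subset\pLie$ is correct. You then kill that centralizer with the Schur-type fact that $Y_{aa}Z=ZY_{bb}$ for all $Z$ forces $Y_{aa}=Y_{bb}=cI$, followed by tracelessness of $A_a$ and $\sum_a n_a\gamma_a=0$. (The first reduction needs no appeal to the equivalence in Section 2, since $\pLie+[\pLie,\pLie]=\pLie\oplus\pr_{\lLie}[\pLie,\pLie]$ directly.)

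Your version is economical: no brackets are computed. It also makes explicit that two-step is equivalent to injectivity of $Y\mapsto\operatorname{ad}(Y)|_{\pLie}$ on $\lLie$. That is precisely the property the paper invokes later, for the decay of $\mathcal D_{G,L}$ and the pointedness of the chambers.

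The paper's version supplies explicit spanning elements of $[\pLie,\pLie]$, which are convenient for the coordinate matrix $\mathbf B$. It also gives the direct containment $[\pLie,\pLie]\supseteq\bigoplus_a\mathfrak{su}(n_a)\oplus\mathfrak z_{\mathbf n}$, which yields $[\pLie,\pLie]=\kLie$ in the two-block full Levi case. Your projection statement gives this too, since there $[\pLie,\pLie]\subset\kLie$ already holds.
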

	
	\begin{proof}
		All off-diagonal block matrices belong to $\pLie$. If $p,q$ lie in block $a$ and $\mu$ lies in a different block, set
		\[
		U_{p\mu}=E_{p\mu}-E_{\mu p},
		\qquad
		V_{p\mu}=\mathrm i(E_{p\mu}+E_{\mu p}).
		\]
		Their brackets contain $E_{qp}-E_{pq}$, $\mathrm i(E_{pq}+E_{qp})$, and the differences $2\mathrm i(E_{pp}-E_{qq})$; hence they span every $\mathfrak{su}(n_a)$. Moreover, if $P_a$ is the coordinate projection onto block $a$, then
		\[
		\sum_{p\in I_a}\sum_{\mu\in I_b}[U_{p\mu},V_{p\mu}]
		=2\mathrm i(n_b P_a-n_a P_b).
		\]
		These elements span $\mathfrak z_{\mathbf n}$. Thus $[\pLie,\pLie]$ contains the full block-diagonal algebra, while $\pLie$ contains every off-diagonal block, proving the claim.
	\end{proof}
	
	Here is the root data needed by all of the general theorems. A maximal torus $T\subset L_{\mathbf n,T_0}$ has Lie algebra
	\[
	\tLie=\left(\bigoplus_{a=1}^s\tLie_a\right)\oplus\mathfrak t_0,
	\]
	where an element can be written
	\[
	H=\diag\bigl(i(x_1^{(1)}+\xi_1),\ldots,
	i(x_{n_s}^{(s)}+\xi_s)\bigr),
	\quad
	\sum_{i=1}^{n_a}x_i^{(a)}=0,\qquad \xi\in V_0.
	\]
	Its exponential lattice is
	\begin{equation*}
		\Lambda_T=\ker(\exp\colon\tLie\to T)
		=\left\{H:x_i^{(a)}+\xi_a\in2\pi\mathbb Z
		\text{ for every }a,i\right\}.
	\end{equation*}
	For the ambient diagonal torus of $\SU(N)$ this becomes
	\begin{equation}\label{eq:examples-su-lattice}
		\Lambda_{\SU(N)}
		=\left\{2\pi i\diag(k_1,\ldots,k_N):
		k_j\in\mathbb Z,\ \sum_{j=1}^Nk_j=0\right\}
		=2\pi Q_{A_{N-1}}^\vee,
	\end{equation}
	where $A_{N-1}$ denotes the root system of $\mathfrak{su}(N)$ and $Q_{A_{N-1}}^\vee$ its coroot lattice. The positive roots of $L_{\mathbf n,T_0}$, its Weyl group, and its Weyl polynomial are
	\begin{align*}
		R_L^+
		&=\coprod_{a=1}^s
		\{x_i^{(a)}-x_j^{(a)}:1\le i<j\le n_a\},
		\\
		\Wt_L&=\prod_{a=1}^s\mathfrak S_{n_a},
		\qquad |\Wt_L|=\prod_{a=1}^sn_a!,\\
		\Pi_L(H)&=\prod_{a=1}^s\prod_{i<j}
		(x_i^{(a)}-x_j^{(a)}).
	\end{align*}
	Here $\mathfrak S_{n_a}$ denotes the symmetric group on $n_a$ letters. The nonzero isotropy weights are the nonzero restrictions, counted with multiplicity, of
	\begin{equation}\label{eq:examples-block-isotropy-weights}
		\omega_{ij}^{ab}(H)
		:=x_i^{(a)}-x_j^{(b)}+\xi_a-\xi_b,
		\qquad a<b,\qquad 1\le i\le n_a,\qquad1\le j\le n_b.
	\end{equation}
	The block scalar summand corresponding to $V_0^\perp$, and any vanishing restriction in \eqref{eq:examples-block-isotropy-weights}, contributes only zero weights. Consequently
	\begin{equation*}
		\mathcal D_{\SU(N),L_{\mathbf n,T_0}}(H)
		=\prod_{a<b}\prod_{i=1}^{n_a}\prod_{j=1}^{n_b}
		w\bigl(\omega_{ij}^{ab}(H)\bigr),
	\end{equation*}
	with $w(0)=1$.
	
	Write
	\[
	d_L=\sum_{a=1}^s(n_a^2-1)+\dim T_0,
	\qquad Q=N^2-1+d_L.
	\]
	The heat kernel inversion formula \Cref{thm:integral-formula} gives, for $g\in\SU(N)$,
	\begin{align*}
		p_t(g)
		&={}
		\frac{\exp\!\left[-\dfrac{t}{12}
			\sum_{a=1}^sn_a(n_a^2-1)\right]}
		{(4\pi t)^{d_L/2}}
		\times
		\int_{\lLie_{\mathbf n,T_0}}
		q_t^{\C}(ge^{iY})j_L^{\nc}(Y)^{1/2}
		e^{-|Y|^2/(4t)}\,\mathrm dY,
	\end{align*}
	where, on the Cartan above,
	\[
	j_L^{\nc}(H)^{1/2}
	=\prod_{a=1}^s\prod_{i<j}
	\frac{\sinh((x_i^{(a)}-x_j^{(a)})/2)}
	{(x_i^{(a)}-x_j^{(a)})/2}.
	\]
	The image formula in \Cref{cor:heat-kernel-image-sc}, together with \eqref{eq:examples-su-lattice}, makes $q_t^{\C}$ an explicit coroot lattice theta quotient.
	
	With the notation in \eqref{zeta:local-constants}, \Cref{zeta:complete-local-constants} gives
	\begin{align}
		\beta_{\SU(N),L_{\mathbf n,T_0}}
		&=\frac1{12}\left[
		N(N^2-1)-\sum_{a=1}^sn_a(n_a^2-1)\right],
		\notag\\
		C_{\SU(N),L_{\mathbf n,T_0}}
		&=\frac{\vol(\SU(N))\vol(L_{\mathbf n,T_0})}
		{(4\pi)^{Q/2}\vol(T)\prod_{a=1}^sn_a!}
		\notag\\
		&\quad\times
		\int_{\tLie}\Pi_L(H)^2
		\prod_{a<b}\prod_{i,j}w(\omega_{ij}^{ab}(H))\,\mathrm dH.
		\label{eq:examples-block-C}
	\end{align}
	
	Two endpoints of the family are useful to keep distinct. Taking $T_0=\{e\}$ gives the semisimple block subgroup $\prod_a\SU(n_a)$; in particular, $(n_1,n_2)=(N-1,1)$ gives $L=\SU(N-1)$. Taking $T_0=Z_{\mathbf n}$ gives the full block Levi subgroup
	\[
	S\bigl(\U(n_1)\times\cdots\times\U(n_s)\bigr).
	\]
	Both are covered by the preceding formulas, but only the following two-block full Levi case is symmetric.
	
	Assume now $s=2$ and $T_0=Z_{\mathbf n}$. Define
	\[
	\theta(g)=\diag(I_{n_1},-I_{n_2})\,g\,
	\diag(I_{n_1},-I_{n_2})^{-1}.
	\]
	Then
	\[
	K=(G^\theta)_0=S(\U(n_1)\times\U(n_2)),
	\qquad L=K,
	\]
	is the compact symmetric pair of type AIII in Cartan's classification \cite[Table~V]{Helgason}. The one-column specialization below is also the sub-Riemannian $K+P$ geometry used to formulate time-optimal control for a class of multilevel quantum systems \cite[Sec.~I]{AlbertiniDAlessandroSheller2020}. Here $\pLie$ consists only of the two off-diagonal blocks, and $[\pLie,\pLie]=\kLie$; hence every result of \Cref{sec:sp-vertical} applies. In particular, the vertical coefficient is the attained singular-value minimum \eqref{eq:sp-F}, satisfies the finite-mesh bounds \eqref{eq:sp-grid-enclosure}, and controls the compact endpoint with the remainder in \eqref{eq:sp-compact-vertical}. These formulas apply to every two-block full Levi pair.
	
	The one-column specialization has a stronger conclusion. Let $n\ge1$,
	\begin{equation}\label{eq:examples-hermitian-pair}
		G=\SU(n+1),
		\qquad
		K=L=S(\U(n)\times\U(1))\cong\U(n).
	\end{equation}
	For $n=1$, this is, up to the normalization of the invariant metric and heat time, the canonical contact sub-Laplacian on $\SU(2)$ studied by Baudoin and Bonnefont \cite{BaudoinBonnefont2009}. Define
	\begin{align*}
		H_M&:=i\diag(M,-\Tr M),\qquad M\in\Herm(n),
		\\
		X_z&:=\begin{pmatrix}0&z\\-z^*&0\end{pmatrix},
		\qquad z\in\mathbb C^n,\qquad |X_z|^2=2\|z\|^2.
	\end{align*}
	If $\lambda_1^+(M)\ge\cdots\ge\lambda_{p(M)}^+(M)>0$ are the positive eigenvalues of $M$, and $\lambda_1^-(M)\ge\cdots\ge\lambda_{q(M)}^-(M)>0$ are the absolute values of its negative eigenvalues, put
	\begin{equation}\label{eq:sp-Phi}
		\Phi_n(M)
		:=4\pi\left(
		\sum_{j=1}^{p(M)}j\lambda_j^+(M)
		+\sum_{j=1}^{q(M)}j\lambda_j^-(M)
		\right).
	\end{equation}
	
	\begin{theorem}\label{thm:examples-AIII}
		For the pair \eqref{eq:examples-hermitian-pair},
		\begin{equation}\label{eq:sp-AIII-coefficient}
			\mathfrak F_{G,K}(H_M)=\Phi_n(M).
		\end{equation}
		An exact minimizing loop uses only frequencies $|k|\le n$, and, uniformly for $\Phi_n(M)\le R$,
		\begin{equation*}
			d_{\sr}(e,\exp(\varepsilon H_M))^2
			=\varepsilon\Phi_n(M)+O_R(\varepsilon^{3/2}),
		\end{equation*}
		with lower error $O_R(\varepsilon^2)$.
	\end{theorem}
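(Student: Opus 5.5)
The plan is to compute $\mathfrak F_{G,K}(H_M)$ through the loop formulation of \Cref{cor:sp-loop-isoperimetry} rather than through the singular-value problem \eqref{eq:sp-F} directly. The vertical asymptotics then follow verbatim from \Cref{thm:sp-compact-vertical}, since the pair is symmetric with $[\pLie,\pLie]=\kLie$. The reason for going through loops is structural. The isotropy action of $K\cong\U(n)$ on $\pLie\cong\C^n$ is the standard complex representation. Hence $B^*Y=\operatorname{ad}(Y)|_{\pLie}$ is complex linear, and $B$ only sees the complex-linear ($\mathfrak u(n)$-) part of $J_\Omega$. Working with $J_\Omega$ directly is awkward for this reason. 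Taking $\Omega$ in $\mathfrak u(n)$ would merge the positive and negative eigenvalues of $M$ into one decreasing list. That gives a value at least $\Phi_n(M)$, in general strictly larger. The anti-linear part of $\Omega$ is exactly what lets the positive and negative parts of $M$ use independent frequency ladders. Fourier analysis of complex loops exposes this directly.

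First I compute the bracket. For $z,w\in\C^n$ one has
\[
[X_z,X_w]=\diag\bigl(wz^*-zw^*,\;w^*z-z^*w\bigr).
\]
For a based loop $x(s)=X_{c(s)}$ with $c\in H^1([0,1];\C^n)$ and $c(0)=c(1)=0$, this gives
\[
\mathcal A_{G,K}(x)=\frac12\int_0^1\diag\bigl(\dot c c^*-c\dot c^*,\,\ast\bigr)\,\mathrm ds .
\]
Writing $c(s)=\sum_{k\ne0}c_k(e^{2\pi iks}-1)$ with independent $c_k\in\C^n$ for positive and negative $k$, I expect two identities, up to a normalization constant to be fixed carefully using $|X_z|^2=2\|z\|^2$. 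The upper block of $\mathcal A$ equals $i\,2\pi\sum_k k\,c_kc_k^*$ plus a correction from the constant term. That correction is killed by requiring the full matrix to lie in $\kLie$, which is automatic. The energy is $\mathcal E=2\sum_k(2\pi k)^2|c_k|^2$. Thus the constraint $\mathcal A=H_M$ becomes $M=\text{const}\cdot\sum_k k\,c_kc_k^*=P-N$, where
\[
P=\sum_{k>0}k\,c_kc_k^*\ge0,\qquad N=\sum_{k<0}|k|\,c_kc_k^*\ge0 .
\]

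For the upper bound, let $e_j^\pm$ be unit eigenvectors for $\lambda_j^\pm(M)$. I place $c_{j}$ on $e_j^+$ and $c_{-j}$ on $e_j^-$ with $|k|\,|c_k|^2$ matched to $\lambda_j^\pm$. This is a loop with frequencies $|k|\le n$. Its energy is $4\pi\bigl(\sum_j j\lambda_j^++\sum_j j\lambda_j^-\bigr)=\Phi_n(M)$ once the constants are fixed. It plays the role of the minimizer \eqref{eq:sp-AIII-minimizer}.

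For the lower bound, I split the energy into the contributions of positive and negative frequencies. For the positive part it suffices to prove
\[
\sum_{k>0}k\cdot\bigl(k|c_k|^2\bigr)\ \ge\ \sum_j j\,\lambda_j(P),
\]
that is, $\sum_k k\Tr P_k\ge\sum_j j\lambda_j(\sum_k P_k)$ for rank-one $P_k\ge0$. This follows by writing $\sum_k k\Tr P_k=\sum_{q\ge1}\Tr\bigl(\sum_{k\ge q}P_k\bigr)$ and noting $\operatorname{rank}\sum_{k<q}P_k\le q-1$. By Ky Fan / Weyl interlacing, $\Tr\sum_{k\ge q}P_k\ge\sum_{j\ge q}\lambda_j(P)$, and summing over $q$ gives the claim. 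Since $P=M+N\ge M$, Weyl monotonicity gives $\lambda_j(P)\ge\lambda_j^+(M)$. The same argument applies to $N\ge -M$. Adding the two parts gives $\mathcal E\ge\Phi_n(M)$, which proves \eqref{eq:sp-AIII-coefficient}.

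The main obstacle I anticipate is the bookkeeping rather than the inequalities. The constant in the area–Fourier identity must come out exactly, including the factor $2$ from $|X_z|^2=2\|z\|^2$, the trace-zero lower block, and the $-1$ offsets enforcing $c(0)=0$. I also need to check that these offsets contribute nothing to the Hermitian block, which amounts to the orthogonality of distinct frequencies. Once \eqref{eq:sp-AIII-coefficient} holds, the uniform bounds with errors $O_R(\varepsilon^{3/2})$ and $O_R(\varepsilon^2)$ are \Cref{thm:sp-compact-vertical} applied with $Z=H_M$.
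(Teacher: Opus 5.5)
Your proposal is correct and follows essentially the same route as the paper. Both arguments reduce to the loop problem of \Cref{cor:sp-loop-isoperimetry} and expand a based loop in Fourier modes, so that the area is $2\pi\sum_k k z_kz_k^*$ and the energy is $8\pi^2\sum_k k^2\|z_k\|^2$; both then bound the positive- and negative-frequency parts separately by the Ky Fan tail argument together with $P_+\ge M$ and $P_-\ge -M$, attain equality with the explicit loop of frequencies $|k|\le n$, and invoke \Cref{thm:sp-compact-vertical} for the asymptotics (the paper leaves your Weyl monotonicity step implicit, and your aside about a constant-term ``correction'' is unnecessary, since the zero mode contributes nothing by orthogonality of frequencies, as you note yourself).
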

	
	\begin{proof}
		Direct multiplication shows $[\pLie,\pLie]=\kLie$. With the area and energy notation of \eqref{eq:sp-loop-isoperimetry}, a based path $z:[0,1]\to\mathbb C^n$ satisfies
		\begin{align*}
			\mathcal A_{G,K}(X_z)&=H_{\mathcal A_n(z)},\\
			\mathcal A_n(z)&=-\frac{\mathrm i}{2}
			\int_0^1(\dot z z^*-z\dot z^*)\,\mathrm ds,\\
			\mathcal E(X_z)&=2\int_0^1\|\dot z(s)\|^2\,\mathrm ds.
		\end{align*}
		Write $z(s)=\sum_{k\in\mathbb Z}z_k e^{2\pi i k s}$. Parseval's identity gives
		\begin{equation}\label{eq:sp-AIII-Fourier}
			\mathcal A_n(z)=2\pi\sum_k k z_kz_k^*,
			\qquad
			\mathcal E(X_z)=8\pi^2\sum_k k^2\|z_k\|^2.
		\end{equation}
		For $k\ge1$, write
		\[
		z_k^+:=z_k,\qquad z_k^-:=z_{-k},\qquad
		P_k^\pm:=2\pi k z_k^\pm(z_k^\pm)^*,
		\qquad P_\pm:=\sum_{k\ge1}P_k^\pm.
		\]
		For the lower bound, let $z$ be any based path satisfying $\mathcal A_n(z)=M$. Then
		\[
		M=P_+-P_-,
		\qquad
		\mathcal E(X_z)=4\pi\sum_{k\ge1}k
		(\Tr P_k^++\Tr P_k^-).
		\]
		For any positive rank-one decomposition $P_+=\sum_k P_k^+$, the Ky Fan variational principle \cite[Chapter~III, \S1]{Bhatia1997}, applied to the tails $\sum_{k>\ell}P_k^+$, yields
		\[
		\sum_{k\ge1}k\Tr P_k^+
		\ge\sum_{j=1}^n j\lambda_j(P_+)
		\ge\sum_{j=1}^{p(M)}j\lambda_j^+(M).
		\]
		Applying the argument to $P_-$ and $-M$ proves $\mathcal E(X_z)\ge\Phi_n(M)$.
		
		Choose orthonormal eigenvectors $u_j^\pm$ of the positive and negative parts of $M$. Equality is attained by
		\begin{equation}\label{eq:sp-AIII-minimizer}
			z_M(s)=
			\sum_{j=1}^{p(M)}
			\sqrt{\frac{\lambda_j^+(M)}{2\pi j}}
			(e^{2\pi ijs}-1)u_j^+
			+\sum_{j=1}^{q(M)}
			\sqrt{\frac{\lambda_j^-(M)}{2\pi j}}
			(e^{-2\pi ijs}-1)u_j^-.
		\end{equation}
		Indeed, \eqref{eq:sp-AIII-Fourier} gives $\mathcal A_n(z_M)=M$ and $\mathcal E(X_{z_M})=\Phi_n(M)$. The loop formula \eqref{eq:sp-loop-isoperimetry} proves \eqref{eq:sp-AIII-coefficient}, and \Cref{thm:sp-compact-vertical} proves the asymptotic.
	\end{proof}
	
	The local spectral constants of the one-column family are also explicit.
	\begin{theorem}
		For \eqref{eq:examples-hermitian-pair},
		\begin{equation}\label{zeta:hermitian-constants}
			\frac Q2=n(n+1),
			\qquad
			C_{G,K}=\left(\frac\pi2\right)^{n(n+1)},
			\qquad
			\beta_{G,K}=\frac{n(n+1)}4.
		\end{equation}
	\end{theorem}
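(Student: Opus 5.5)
The plan is to treat the three constants separately, using the general formulas of \Cref{zeta:complete-local-constants} and the block-family specialization. Only $C_{G,K}$ needs real work.

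\emph{Dimensions and $\beta$.} Here $\dim G=(n+1)^2-1=n^2+2n$ and $\dim K=n^2$, so $Q=\dim G+\dim K=2n^2+2n$ and $Q/2=n(n+1)$. For $\beta_{G,K}$ we use the block-family formula for $\beta_{\SU(N),L_{\mathbf n,T_0}}$ with $\mathbf n=(n,1)$ and $N=n+1$:
\[
\beta_{G,K}=\tfrac1{12}\bigl[(n+1)n(n+2)-n(n-1)(n+1)\bigr]=\tfrac{n(n+1)}4 .
\]

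\emph{Reduction of $C_{G,K}$ to an eigenvalue integral.} We use \eqref{zeta:local-constants} or \eqref{zeta:nilpotent-model}, that is $C_{G,K}=\vol(G)(4\pi)^{-Q/2}\int_{\kLie}\mathcal D_{G,K}(Y)\,\mathrm dY$, and compute $\mathcal D_{G,K}$ directly in the coordinates $H_M,X_z$. A block multiplication gives $[H_M,X_z]=X_{\mathrm i(M+\Tr M)z}$. Hence $J_{H_M}$ is multiplication by $\mathrm i A$ on $\C^n\cong\R^{2n}$, with $A:=M+(\Tr M)I$. Its singular values are therefore the absolute eigenvalues $|a_j|$ of $A$, each giving one rotation plane. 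By \eqref{zeta:weight-product} this yields $\mathcal D_{G,K}(H_M)=\prod_{j=1}^n w(a_j)$.

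Next we transport the metric Lebesgue measure on $\kLie$ to Hilbert--Schmidt Lebesgue measure on $\Herm(n)$, in two steps. First, the metric $|H_M|^2=\Tr M^2+(\Tr M)^2$ differs from the Hilbert--Schmidt metric only in the trace direction. Second, $M\mapsto A$ is the identity on traceless matrices and multiplies the trace direction by $n+1$. Both contribute explicit powers of $n+1$, which I will track carefully.

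\emph{Evaluation of the eigenvalue integral.} Weyl integration on $\Herm(n)$ gives
\[
\int_{\Herm(n)}\prod_j w(a_j)\,\mathrm dA=c_n\int_{\R^n}\Delta(a)^2\prod_j w(a_j)\,\mathrm da ,
\]
with the standard unitary-group constant $c_n=\pi^{n(n-1)/2}/\prod_{k=1}^{n}k!$ for the Hilbert--Schmidt measure. By Andr\'eief's identity the remaining integral equals $n!$ times the Hankel determinant of moments of $w$. The key observation is that after rescaling $a=2\pi x$, the weight $w(a)=\pi x/\sinh(\pi x)=|\Gamma(1+\mathrm ix)|^2$ is exactly the Meixner--Pollaczek weight with $\lambda=1$ and $\phi=\pi/2$. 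The monic orthogonal polynomials then have explicit squared norms $h_k$, proportional to $(k+1)\pi/2$ up to the scaling factors of the leading coefficients and the change of variable. The Hankel determinant is $\prod_{k=0}^{n-1}h_k$, which yields a product of factorials and powers of $\pi$ and $2$.

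\emph{Final assembly and main obstacle.} Finally we insert Macdonald's volume $\vol(\SU(n+1))=\sqrt{n+1}\,(2\pi)^{(n^2+3n)/2}/\prod_{k=1}^{n}k!$ for the metric $-\Tr$, cited via \cite{Hashimoto}. We multiply by $(4\pi)^{-n(n+1)}$ and check that the factorials cancel against $c_n$, $n!$ and the $h_k$. The powers of $n+1$ from the volume and from the change of variables should cancel as well, leaving $(\pi/2)^{n(n+1)}$. I expect this bookkeeping of normalizations to be the main obstacle: the trace-direction Jacobian, the Weyl constant, the Meixner--Pollaczek norms under rescaling, and the volume constant must all be consistent. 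As a sanity check I would first verify the case $n=1$, where the Heisenberg-type tangent model gives $p_1^0(e)$ in closed form.
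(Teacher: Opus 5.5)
\paragraph{Same route as the paper, but one constant is wrong.}
Your route is the paper's: $\beta_{G,K}$ from $|\rho_{\SU(n+1)}|^2-|\rho_{\SU(n)}|^2$, then $\mathcal D_{G,K}=\prod_j w(u_j)$ with $u_j$ the eigenvalues of $M+(\Tr M)I$. After that come a trace-direction Jacobian, unitary Weyl integration, Andr\'eief's identity with Meixner--Pollaczek norms, and Macdonald's volume. Several of your ingredients are correct:
\begin{itemize}
\item the identity $[H_M,X_z]=X_{\mathrm i(M+\Tr M)z}$;
\item the Meixner--Pollaczek parameters $(\lambda,\phi)=(1,\pi/2)$;
\item the value of $\vol(\SU(n+1))$;
\item the trace-direction bookkeeping, which nets to $\mathrm dY=(n+1)^{-1/2}\,\mathrm dA$ and cancels the $\sqrt{n+1}$ in the volume.
\end{itemize}

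\paragraph{The error in $c_n$.}
The one explicit constant you commit to is wrong, and it would make the final assembly fail for $n\ge2$. For Lebesgue measure of the Hilbert--Schmidt inner product $\Tr(AB)$ on $\Herm(n)$, the Weyl constant is
\[
c_n=\frac{(2\pi)^{n(n-1)/2}}{\prod_{k=1}^n k!}=\frac{\vol(\U(n))}{(2\pi)^n\,n!}
\qquad(\vol(\U(n))\text{ in the trace metric}),
\]
not $\pi^{n(n-1)/2}/\prod_k k!$. Your value is the random-matrix constant for the flat measure $\prod_i\mathrm dA_{ii}\prod_{i<j}\mathrm d\Re A_{ij}\,\mathrm d\Im A_{ij}$. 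That measure is $2^{-n(n-1)/2}$ times Hilbert--Schmidt Lebesgue measure, because the off-diagonal orthonormal coordinates are $\sqrt2\,\Re A_{ij}$ and $\sqrt2\,\Im A_{ij}$. A Gaussian test function at $n=2$ confirms $c_2=\pi$.

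With your constant the product comes out as $2^{-n(n-1)/2}(\pi/2)^{n(n+1)}$. Your planned $n=1$ sanity check cannot catch this, since the discrepancy equals $1$ there.

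\paragraph{Corrected assembly.}
Use the corrected $c_n$ together with the Meixner--Pollaczek evaluation
\[
\int_{\R^n}\prod_{i<j}(u_i-u_j)^2\prod_{i}w(u_i)\,\mathrm du=\pi^{n(n+1)}\Bigl(\prod_{k=1}^n k!\Bigr)^2 .
\]
This gives
\[
\int_{\kLie}\mathcal D_{G,K}\,\mathrm dY=\frac{(2\pi)^{n(n-1)/2}\,\pi^{n(n+1)}\prod_{k=1}^n k!}{\sqrt{n+1}},
\]
which is exactly the paper's intermediate formula. Multiplying by $\vol(\SU(n+1))\,(4\pi)^{-n(n+1)}$ then yields $(\pi/2)^{n(n+1)}$.
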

	
	\begin{proof}
		Here $\dim K=n^2$ and $\dim\pLie=2n$. For type $A_{N-1}$ in the trace metric,
		\[
		|\rho_{\SU(N)}|^2=\frac{N(N^2-1)}{12},
		\]
		so subtraction of the $\SU(n)$ value gives $\beta_{G,K}=n(n+1)/4$.
		
		Under $A\mapsto\diag(A,-\Tr A)$, diagonalize $A=iS$ with eigenvalues $x_1,\ldots,x_n$, and put $u_i=x_i+\sum_j x_j$. Weyl integration reduces the relative Jacobian integral to
		\begin{equation*}
			\int_{\kLie}\mathcal D_{G,K}(Y)\,\mathrm dY
			=\frac{(2\pi)^{n(n-1)/2}}
			{\sqrt{n+1}\prod_{j=1}^n j!}
			\int_{\mathbb R^n}\prod_{i<j}(u_i-u_j)^2
			\prod_{i=1}^nw(u_i)\,\mathrm du.
		\end{equation*}
		The remaining matrix integral is
		\begin{equation*}
			\int_{\mathbb R^n}\prod_{i<j}(u_i-u_j)^2
			\prod_{i=1}^nw(u_i)\,\mathrm du
			=\pi^{n(n+1)}\left(\prod_{j=1}^n j!\right)^2.
		\end{equation*}
		This identity follows by applying Andr\'eief's formula \cite[Proposition~7.1]{Zygouras2022} to the Meixner-Pollaczek norms \cite[Equations~(18.19.7)-(18.19.9)]{DLMF}. For type $A_{N-1}$ the exponents are $1,\ldots,N-1$, and the trace metric Chevalley lattice has covolume $2^{N(N-1)/2}\sqrt N$. Macdonald's formula in the metric normalization of \cite{Hashimoto} therefore gives
		\begin{equation*}
			\vol(\SU(N))
			=\frac{\sqrt N(2\pi)^{(N-1)(N+2)/2}}
			{\prod_{j=1}^{N-1}j!}.
		\end{equation*}
		Substitution into \eqref{zeta:local-constants}, whose denominator is now $(4\pi)^{Q/2}$, gives the displayed value of $C_{G,K}$.
	\end{proof}
	
	\begin{example}
		We finish by recovering the CR sphere and rank-one cases. Let
		\[
		(G,K)=\bigl(\SU(n+1),S(\U(n)\times\U(1))\bigr)
		\]
		with the trace metric. Under the quotient identification
		\[
		L^2(G)^{\mathrm{right}\,\SU(n)}
		\simeq
		L^2\bigl(G/\SU(n)\bigr)
		\simeq
		L^2(\mathbb S^{2n+1}),
		\]
		let $\mathcal H^{p,q}$ denote the space of restrictions to $\mathbb S^{2n+1}$ of harmonic polynomials on $\mathbb C^{n+1}$ that are homogeneous of bidegree $(p,q)$. The right-$\SU(n)$-invariant $K$-types are one-dimensional, and the multiplicity-free $\U(n+1)\downarrow\U(n)$ branching rule identifies them with these spaces; see \cite{GoodmanWallach2009}. Hence
		\[
		L^2(\mathbb S^{2n+1})
		=
		\widehat{\bigoplus}_{p,q\ge0}\mathcal H^{p,q}.
		\]
		The Casimir difference gives the eigenvalue and the Weyl dimension formula gives the multiplicity:
		\[
		\Lambda_{p,q}=2pq+n(p+q),
		\qquad
		M_{p,q}
		=
		\frac{p+q+n}{n}
		\binom{p+n-1}{n-1}
		\binom{q+n-1}{n-1}.
		\]
		With the normalization used in \cite{BaudoinWang2013}, the corresponding positive eigenvalue is $2\Lambda_{p,q}$. Writing
		\[
		p=m+k_+,\qquad q=m+k_-,
		\qquad
		k_\pm:=\max\{\pm k,0\},
		\]
		one obtains
		\[
		2\Lambda_{p,q}
		=
		4m(m+|k|+n)+2n|k|,
		\]
		which is the CR sphere spectrum.
		
		For $n=1$, the right-$\SU(n)$ invariance condition is vacuous, so this is the full spectrum of the canonical contact sub-Laplacian on $\SU(2)$. In that case
		\[
		\Lambda_{p,q}=2pq+p+q,
		\qquad
		M_{p,q}=p+q+1,
		\]
		recovering, after the scalar metric normalization, the spectral parametrization used in \cite{BaudoinBonnefont2009,BauerFurutani2008}.
	\end{example}
	
	\section*{Acknowledgments}
	The first two authors thank L.~Hantzko and R.~Raussendorf for drawing their attention to the connection between sub-Riemannian geometry and measurement-based quantum computation.

	\noindent\textbf{Funding.}
	This work was supported by the National Natural Science Foundation of China (12301145, 12561020, 12261107) and the Yunnan Fundamental Research Projects (202401AU070123, 202601AT070048).

	\medskip
	{\bf Author Contributions:} All authors contributed equally to the writing and preparation of the manuscript.
	
	\medskip
	{\bf Data availability:} Data sharing is not applicable to this article as no new data were created or analyzed in this study.
	
	\medskip
	{\bf Conflict of Interests:} The authors declare that they have no conflict of interest.

\end{document}